  \def\cL{\mathcal{L}}
\newcommand{\tfa}{time-frequency analysis}
\newcommand{\stft}{short-time Fourier transform}
\newcommand{\fif}{if and only if}
\newcommand{\psdo}{pseudodifferential operator}
\newtheorem{theorem}{Theorem}[section]
\newtheorem{lemma}[theorem]{Lemma}
\newtheorem{corollary}[theorem]{Corollary}
\newtheorem{proposition}[theorem]{Proposition}
\newtheorem{definition}[theorem]{Definition}
\newtheorem{remark}[theorem]{Remark}
\newcommand{\beqa}{\begin{eqnarray*}}
\newcommand{\eeqa}{\end{eqnarray*}}
\newcommand{\field}[1]{\mathbb{#1}}
\newcommand{\bR}{\field{R}}        
\newcommand{\bN}{\field{N}}        
\newcommand{\bZ}{\field{Z}}        
\newcommand{\fiola}{FIO(\Xi ,s)}
\def\la{\lambda}
\def\eps{\epsilon}
\def\cS{\mathcal{S}}
\def\cM{\mathcal{M}}
\def\cC{\mathcal{C}}
\def\a{\aleph}
\def\rd{\bR^d}
\def\rdd{{\bR^{2d}}}
\def\lrd{L^2(\rd)}
\def\intrd{\int_{\rd}}
\def\intrdd{\int_{\rdd}}
\def\R{\right)}
\def\<{\left<}
\def\>{\right>}
\def\mv1{M_v^1}
\def\phas{(x,\o )}
\def\mn{(m,n)}
\def\mn'{(m',n')}
\def\wpr{WF^{p,r}_G}
\def\o{\eta}
\def\a{\alpha}
\def\b{\beta}
\def\R{\mathbb{R}}
\def\Ren{\mathbb{R}^d}
\def\Renn{\mathbb{R}^{2d}}
\def\sch{\mathcal{S}}
\def\Fur{\mathcal{F}}
\def\f{\varphi}
\def\Sn2{S_{2}(L^{2}(\Ren))}
\def\S1{S_{1}(L^{2}(\Ren))}
\def\sig00{\sigma_{0,0}}
\def\la{\langle}
\def\ra{\rangle}
\begin{document}
\begin{abstract} We consider Schr\"odinger equations with real-valued smooth Hamiltonians, and non-smooth bounded pseudo-differential potentials,
whose symbols may be not even differentiable. The well-posedness of the Cauchy problem is proved in the frame of the modulation spaces, and results of micro-local propagation of singularities are given in terms of Gabor wave front sets.
\end{abstract}

\title[Propagation of the Gabor Wave Front Set for  Schr\"odinger Equations]{Propagation of the Gabor Wave Front Set for  Schr\"odinger Equations with non-smooth potentials}

\author{Elena Cordero}
\address{Universit\`a di Torino, Dipartimento di Matematica, via Carlo Alberto 10, 10123 Torino, Italy}
\email{elena.cordero@unito.it}
\author{Fabio Nicola}
\address{Dipartimento di Scienze Matematiche,
Politecnico di Torino, corso Duca degli Abruzzi 24, 10129 Torino,
Italy}
\email{fabio.nicola@polito.it}
\author{Luigi Rodino}
\address{Universit\`a di Torino, Dipartimento di Matematica, via Carlo Alberto 10, 10123 Torino, Italy}
\email{luigi.rodino@unito.it}

\subjclass{Primary 35S30; Secondary 47G30}

\subjclass[2010]{35A18, 35A21, 35B65, 35S30, 42C15, 
47G30, 47D08}
\keywords{Schr\"odinger propagator, modulation spaces,
short-time Fourier
 transform, wave front set}
\maketitle
\section{Introduction}

The authors in \cite{fio3} and in collaboration with Gr\"ochenig in \cite{Wiener} proposed a new approach to the calculus of
the Fourier integral operators (FIOs) in terms of time-frequency localization, cf.\ \cite{Daubechies90} and \cite{book}, also named Gabor
analysis. The FIOs under consideration were of the type of those appearing in the study of the Schr\"odinger
  equations, typically  the phase function being a homogeneous function of degree 2 in the whole of the phase space variables.
  With respect to the standard representations of FIOs, the time-frequency representation looks more involved, since old and new phase-space variables appear simultaneously, and everything depends on the choice of the so-called window function. On the other hand, the problem of the caustics is automatically solved in this new setting, see \cite{Wiener}, and the expression provides an excellent tool for the numerical analysis, see \cite{fio3}.\par
  In the present paper we apply the aforesaid results to the
  analysis of the Schr\"odin\-ger
  equation. With respect to the enormous existing literature, our
  results will be new in the following aspects. Fixed a
  real-valued Hamiltonian, homogeneous of degree $2$, we allow a
  pseudo-differential perturbation (called also potential in the
  following) with a bounded, complex-valued, non-smooth symbol,
  for which even differentiability may be lost. A global-in-time
  propagator is constructed in the class of the FIOs in
  \cite{Wiener}, and well-posedness of the Cauchy problem is
  deduced in suitable modulation spaces. About propagation of
  singularities, which is our main concern in this paper, the
  known results do not apply to such situation. We are then led to
  a new definition of Gabor wave front set, which allows the
  expression of optimal results of propagation  in our
  context.\par
  Let us be more precise. The aim of the paper is to study the
  representation in terms of time-frequency analysis of the
  propagator $e^{i t H}$,
  \begin{equation}\label{PropH}
  H=a(x,D)+\sigma(x,D),
  \end{equation}
 providing the solution to the Cauchy problem: 
 \begin{equation}\label{C1intro}
\begin{cases} i \displaystyle\frac{\partial
u}{\partial t} +a(x,D)u+\sigma(x,D) u=0\\
u(0,x)=u_0(x).
\end{cases}
\end{equation}
The Hamiltonian  $a(x,D)$ is a pseudodifferential operator in the Kohn-Nirenberg form
\begin{equation}\label{kohn}
A f(x)=a(x,D) f(x)=\intrd e^{2\pi i \la x,\xi\ra }a(x,\xi)\hat{f}(\xi)\,d\xi,
\end{equation}
 where the symbol $a(z)$,
$z=(x,\xi)$, is real-valued positively homogeneous of degree 2,
i.e.\ $a(\lambda z)=\lambda^2 a(z)$ for $\lambda>0$, with
$a\in\cC^\infty (\rdd\setminus{0})$. This implies $a(x,D)$ is formally self-adjoint modulo $0$-order perturbations. Basic examples are
real-valued quadratic forms $a(z)$, including the cases when $i
\partial_t +a(x,D)$ is the free particle or the
harmonic oscillator operator. When $a(z)$ is not a polynomial, we
shall assume $a(z)$ modified in a bounded neighborhood of the
origin, in such a way that we have $a\in\cC^\infty(\rdd)$ keeping
real values. As we shall see, cf.\ Example 4 below, the
singularity at the origin of $a(z)$ can be admitted as well, by
absorbing it in a non-smooth potential. The pseudodifferential
operator $a(x,D)$ enters the classes of \cite{Shubin91}, see
also \cite{helffer84}, to which we address for the symbolic
calculus and other properties, see also the next Section \ref{2}.

Concerning the potential $\sigma(x,D)$, the regularity assumptions will be expressed in terms of the modulation spaces, introduced by Feichtinger in \cite{F1}, see also \cite{fg89jfa},  and in the last decades applied  in many fields of mathematics, in particular in PDEs.
 We need first to recall some basic notations. The time-frequency
shifts (phase-space shifts) are denoted by
\begin{equation}\label{TF-S}
\pi(z) f(t)= M_\eta T_x f(t)= e^{2\pi i \la t, \eta\ra}f(t-x),
\quad z=(x,\eta).
\end{equation}
The short-time Fourier transform (STFT) of a function or
distribution $f$ on $\rd$ with respect to a Schwartz window
function $g\in\cS(\rd)\setminus\{0\}$ is defined by
\begin{equation}\label{STFT}
V_g f(x,\eta)=\la f ,\pi(z)g\ra =\intrd f(v) \overline{g(v-x)}
e^{- 2\pi i \la \eta, v\ra}\, dv,\quad z=\phas \in\rdd.
\end{equation}
Assuming for simplicity $\|g\|_2=1$, from $V_g f$ we may
reconstruct $f$ by the formula
\begin{equation}\label{Invform}
f=\intrdd V_g f\phas M_\eta T_x g\, dx d\eta
\end{equation}
(see the the next Subsection \ref{2.1} for details). 

 Fix a not null window function
$\psi\in\cS(\rdd)$ and perform the STFT $V_\psi \sigma(z,\zeta)$
of $\sigma(x,\xi)$ with respect to $z=(x,\xi)\in\rdd$ with dual
variables $\zeta\in\rdd$.
\begin{definition}\label{Ssw}
We say that $\sigma\in\cS'(\rdd)$ belongs to the class $S^s_w$,
$s\geq0$, if
\begin{equation}\label{Ssweq}
|V_\psi \sigma(z,\zeta)|\leq C \la \zeta\ra^{-s},\quad z,\zeta\in
\rdd,
\end{equation}
for a suitable $C>0$ independent of $z$ and $\zeta$, with $\la \zeta\ra=(1+|\zeta|^2)^{1/2}$.
\end{definition}

Our assumption on the potential  will be $\sigma \in S^s_w$ with $s>2d$. Observe that
\begin{equation}\label{inteSs}
\bigcap_{s\geq 0}S^s_w=S^0_{0,0},
\end{equation}
where $S^0_{0,0}$ is the class of all $\sigma\in\cC^\infty(\rdd)$ satisfying 
\begin{equation}\label{HCS0}
|\partial^\a \sigma(z)|\leq C_\a, \quad \a\in\bZ^{2d}_+,
\,\,z=(x,\xi)\in\rdd.
\end{equation}
Whereas, for $s\to 2d+$, the symbols in $S^s_w$ have a smaller
regularity. More precisely, if $s>2d+m$, then  $S^s_w\subset \cC^m(\rdd)$. In particular, for
$s>2d$, $S^s_w\subset \cC^0(\rdd)$,  but the differentiability is lost
in general as soon as $s\leq 2d +1$. \par It is worth to mention
now the definition of the Sj\"ostrand class $S_w$, see
\cite{wiener30}, \cite{wiener31} and \cite{charly06}, given by all
the symbols $\sigma$ for which
\begin{equation}\label{Sjostr}
\intrd \sup_{z\in\rdd} | V_\psi \sigma(z,\zeta)|\,d\zeta <\infty.
\end{equation}
Note that
$$ \bigcup_{s>2d} S^s_w\subset S_w\subset \cC^0(\rdd).
$$
In the present paper we shall not treat the case $\sigma\in S_w$, let
us refer to \cite{CGNR13} where quadratic Hamiltonians with a
Sj\"ostrand  potential are studied.\par
Given any
linear continuous operator $P:\cS(\rd)\to \cS'(\rd)$, its
time-frequency representation is provided by the (continuous)
Gabor matrix
\begin{equation}\label{GbM}
k(w,z):=\la P\pi(w)g,\pi(z) g\ra,\quad w,z\in\rdd
\end{equation}
so that
\begin{equation}\label{KGbM}
V_g(Pf)(z)=\intrdd k(w,z) V_g f(w) \,dw.
\end{equation}
Time-frequency representations give a deep insight into the
properties of relevant classes of operators, see for example \cite{Benyi et
all,locNC09,bertinoro,charly06,Miyachi-NicolaRivetti-Tabacco-Tomita,wh}.
 We want to
study the Gabor matrix $k(t,w,z)$ of the propagator $e^{i t H}$.
Its structure will be linked, as expected, to the Hamiltonian
field of $a(x,\xi)$.  Namely, consider
\begin{equation}\label{1.11}
\begin{cases}
 2\pi\dot{x}=-\nabla  _\xi a ( x,\xi) \\
 2\pi \dot{\xi}=\nabla _x a (x,\xi)\\
 x(0)=y,\ \xi(0)=\eta,
\end{cases}
\end{equation}
(the factor $2\pi$ depends on our normalization of the STFT).
Under our assumptions, the solution
$\chi_t(y,\eta)=(x(t,y,\eta),\xi(t,y,\eta))$ exists for all
$t\in\bR$ and defines a symplectic diffeomorphism
$\chi_t:\,\bR_{y,\eta}^{2d}\to\bR_{x,\xi}^{2d}$ homogeneous of
degree $1$ with respect to $w=(y,\eta)$ for large $|w|$, for every
fixed $t\in\bR$.
\begin{theorem}\label{T1.1}
Let the preceding assumptions be satisfied, in particular let $\sigma\in S^s_w$, $s>2d$,  and let $k(t,w,z)$ be
the Gabor matrix of the Schr\"{o}dinger propagator $e^{i t H}$.
Then there exists $C=C(t,s)>0$ such that 
\begin{equation}\label{KT1.1}
|k(t,w,z)|\leq C\la z-\chi_t(w)\ra^{-s},\quad
z=(x,\xi),\,w=(y,\eta)\in\rdd.
\end{equation}
\end{theorem}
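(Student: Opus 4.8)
The plan is to reduce everything to two building blocks plus an iterated composition. First I would record the Gabor matrix estimate for the smooth part. Since $a\in\cC^\infty(\rdd)$ is real and homogeneous of degree $2$, the propagator $e^{itA}$ is a Fourier integral operator of the type studied in \cite{Wiener,fio3}, and its Gabor matrix $k_A(\tau,u,z)=\langle e^{i\tau A}\pi(u)g,\pi(z)g\rangle$ decays rapidly off the graph of the flow: for every $N\geq 0$ one has $|k_A(\tau,u,z)|\leq C_N\langle z-\chi_\tau(u)\rangle^{-N}$, uniformly for $\tau$ in compact intervals. Second, from the definition of $S^s_w$ together with the standard correspondence between the STFT of a Kohn--Nirenberg symbol and the Gabor matrix of the associated operator, the potential satisfies $|k_\sigma(u,z)|=|\langle\sigma(x,D)\pi(u)g,\pi(z)g\rangle|\leq C\langle z-u\rangle^{-s}$, i.e.\ decay of order $s$ along the identity flow.

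The heart of the argument is a composition (``star product'') lemma for such kernels. If $\chi,\chi'$ are bi-Lipschitz and $|k_1(u,z)|\leq A_1\langle z-\chi(u)\rangle^{-s}$, $|k_2(w,u)|\leq A_2\langle u-\chi'(w)\rangle^{-s}$, then the composed kernel $\int_{\rdd}k_2(w,u)k_1(u,z)\,du$ is bounded by $A_1A_2\,C\,\langle z-\chi(\chi'(w))\rangle^{-s}$. This follows from $\langle z-\chi(u)\rangle\gtrsim\langle\chi^{-1}(z)-u\rangle$ (bi-Lipschitzness of $\chi^{-1}$), the convolution inequality $\langle\cdot\rangle^{-s}\ast\langle\cdot\rangle^{-s}\lesssim\langle\cdot\rangle^{-s}$ — valid precisely because $s>2d$ — and then $\langle\chi^{-1}(z)-\chi'(w)\rangle\gtrsim\langle z-\chi(\chi'(w))\rangle$. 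This is exactly the step where the threshold $s>2d$ is essential, and where the uniform bi-Lipschitz bounds on $\chi_\tau$, $\tau\in[0,t]$, enter.

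With these tools I would expand $e^{itH}$ by iterated Duhamel's formula around the free flow, writing $e^{itH}=\sum_{n\geq 0}U_n(t)$ with $U_0(t)=e^{itA}$ and
\begin{equation*}
U_n(t)=i^n\int_{0\leq s_n\leq\cdots\leq s_1\leq t}e^{i(t-s_1)A}\,\sigma\,e^{i(s_1-s_2)A}\,\sigma\cdots\sigma\,e^{is_nA}\,ds_1\cdots\,ds_n.
\end{equation*}
The Gabor matrix of each integrand is the iterated star product of the $n+1$ factors $e^{i(\cdot)A}$ (rapid decay, order $N\geq s$, along the corresponding pieces of the flow) and the $n$ factors $\sigma$ (decay of order $s$ along the identity). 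Since the Hamiltonian flow is a one-parameter group, $\chi_{t-s_1}\circ\chi_{s_1-s_2}\circ\cdots\circ\chi_{s_n}=\chi_t$, so the composition lemma gives $|k_{U_n}(t,w,z)|\leq C(t)^{n}\langle z-\chi_t(w)\rangle^{-s}$ for the integrand, uniformly on the simplex. Integrating over the simplex of volume $t^n/n!$ yields $|k_{U_n}(t,w,z)|\leq C(t)^{n}\tfrac{t^n}{n!}\langle z-\chi_t(w)\rangle^{-s}$, and summing the series produces $|k(t,w,z)|\leq C'(t)\langle z-\chi_t(w)\rangle^{-s}$, as claimed.

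The main obstacle I anticipate is controlling the constants uniformly in $n$. Because $\sigma$ is non-smooth, one cannot invoke the classical Egorov theorem to replace $e^{-i\tau A}\sigma e^{i\tau A}$ by a pseudodifferential operator with a good symbol; all estimates must instead be carried out directly at the level of the continuous Gabor matrix, where each of the roughly $2n$ composition steps contributes one convolution constant and one bi-Lipschitz factor. The key point is that every flow piece has time increment in $[0,t]$, so its bi-Lipschitz constant is bounded by a single $L(t)$; hence the per-step factors stay bounded and the resulting geometric growth $C(t)^{n}$ is absorbed by the simplex volume $t^n/n!$, guaranteeing convergence. Checking that membership in $S^s_w$ transfers to the stated Gabor-matrix decay, and that the star-product constants are genuinely independent of $n$ through the intermediate flows, are the delicate points to verify carefully.
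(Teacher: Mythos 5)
Your proposal is correct and follows essentially the same route as the paper: the paper also establishes the unperturbed estimate $e^{itA}\in FIO(\chi_t,s)$ first, characterizes $\sigma(x,D)\in FIO(\mathrm{Id},s)$ via $\sigma\in S^s_w$, and then runs the Dyson--Phillips expansion, controlling each term by the Gabor-matrix composition lemma (with the subconvolutivity of $\la\cdot\ra^{-s}$ for $s>2d$ and the group law $\chi_{t-s_1}\circ\cdots\circ\chi_{s_n}=\chi_t$) and summing against the simplex volume $t^n/n!$. The only cosmetic difference is that the paper factors $e^{itH}=e^{itA}Q(t)$ and shows $Q(t)$ is a pseudodifferential operator with symbol in $M^\infty_{1\otimes v_s}$ (so all intermediate flows in the composition of the conjugated potentials $B(\tau)=e^{-i\tau A}\sigma(x,D)e^{i\tau A}$ are the identity), whereas you compose the unfactored integrand directly.
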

According to the notations of \cite{Wiener}, this can be rephrased
as $e^{it H}\in FIO(\chi_t,s)$. For $t$ sufficiently small
our assumptions yield $\det \frac{\partial x}{\partial
y}(t,y,\eta)\not=0$ in the expression of $\chi_t$, and
\eqref{KT1.1} is then equivalent to
\begin{equation}\label{1.13}
(e^{i t H}u_0)(t,x)=\intrd e^{2\pi i \Phi(t,x,\eta)}
b(t,x,\eta)\widehat{u}_0(\eta)\,d\eta,
\end{equation}
with the phase $\Phi$ linked to $\chi_t$ as standard and
$b(t,\cdot)\in S^s_w$, see \cite[Theorem 4.3]{Wiener}. In the
classical approach, cf.\ \cite{wiener1}, the occurrence of caustics
makes the validity of \eqref{1.13} local in time. So for $t\in
\bR$ one is led to multiple compositions of local representations,
with unbounded number of variables possibly appearing in the
expression. Whereas $k(t,w,z)$ obviously keeps life for every
$t\in\bR$, and the estimates \eqref{KT1.1} hold for $\chi_t$ with
$t\in\bR$.

Under the assumption
$\sigma\in S^s_w$, $s> 2d$, natural
functional frame to express boundedness and propagation results  for $e^{it H}$ is given by the modulation spaces (the classes $S^s_w$ are
special cases), see \cite{F1} and the short survey in Section
\ref{2}.\par We begin to recall here that for  $1\leq p\leq \infty$, $r\in\bR$, the modulation space $M^p_r(\rd)$
is  defined as the space of all $f\in\cS'(\rd)$ for which
\begin{equation}\label{minfty}
\|f\|^p_{M^p_r(\rd)}=\intrdd |V_g f(z)|^p \la z\ra^{pr} dz<\infty
\end{equation}
(with obvious modifications for $p=\infty$).
Let us now define the  Gabor wave front set $\wpr(f)$ under our consideration.
\begin{definition}\label{defWFgr}
Let  $g\in\cS(\rd)$, $g\not=0$, $r>0$. For $f\in M^p_{-r}(\rd)$, $z_0\in \rdd$, $z_0\not=0$, we say that $z_0\notin \wpr (f)$ if there exists an open conic neighborhood $\Gamma_{z_0}\subset \rdd$ containing $z_0$ such that for a suitable constant $C>0$
\begin{equation}\label{5.1}
\int_{\Gamma_{z_0}}|V_gf(z)|^p \la z\ra^{pr}<\infty
\end{equation}
(with obvious changes for $p=\infty$).
\end{definition}
Then $\wpr (f)$ is well-defined as conic closed subset of $\rdd\setminus\{0\}$.
Our main results are  summarized as follows.
\begin{theorem}\label{T1.5}
Consider $\sigma\in S^s_w$, $s>2d$, $1\leq p\leq\infty$. Then
\begin{equation}\label{T1.5eq1}
e^{i t H}: M^p_r(\rd)\to M^p_r (\rd)
\end{equation}
continuously, for $|r|< s-2d$. Moreover,  for $u_0\in M^p_{-r}(\rd)$,
\begin{equation}\label{T1.5eq2}
WF^{p,r}_G(e^{i t H} u_0)=\chi_t(WF^{p,r}_G (u_0)),
\end{equation}
provided $0<2r<s-2d$.
\end{theorem}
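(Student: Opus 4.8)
The plan is to reduce the boundedness statement \eqref{T1.5eq1} and the propagation statement \eqref{T1.5eq2} to the pointwise Gabor matrix estimate \eqref{KT1.1} of Theorem~\ref{T1.1}, treating the continuous Gabor matrix $k(t,w,z)$ as the kernel of an integral operator acting on the STFT side via \eqref{KGbM}. First I would establish the boundedness $e^{itH}:M^p_r\to M^p_r$ for $|r|<s-2d$. Writing $V_g(e^{itH}f)(z)=\intrdd k(t,w,z)V_gf(w)\,dw$ and inserting the bound $|k(t,w,z)|\leq C\la z-\chi_t(w)\ra^{-s}$, one obtains a Schur-type estimate: the weighted operator has kernel dominated by $\la z-\chi_t(w)\ra^{-s}$ tested against the weights $\la z\ra^{pr}$. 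Since $\chi_t$ is a symplectic diffeomorphism homogeneous of degree $1$ for large $|w|$, it is bi-Lipschitz on the relevant conic region, so $\la z\ra\simeq\la\chi_t(w)\ra\simeq\la w\ra$ when $z$ is near $\chi_t(w)$, and the weight can be transferred across the kernel at the cost of a factor $\la z-\chi_t(w)\ra^{|r|}$. Thus the effective decay is $\la z-\chi_t(w)\ra^{-(s-|r|)}$, which is integrable (uniformly in the other variable) precisely because $s-|r|>2d$; a Schur test for $p=1,\infty$ and interpolation, or a direct Young/Minkowski argument, then gives the $M^p_r$-boundedness.

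Next I would turn to the inclusion $\chi_t(WF^{p,r}_G(u_0))\subseteq WF^{p,r}_G(e^{itH}u_0)$ and its reverse. Fix $z_0\neq0$ with $\chi_t(z_0)\notin WF^{p,r}_G(e^{itH}u_0)$; by Definition~\ref{defWFgr} there is an open conic neighborhood $\Gamma$ of $\chi_t(z_0)$ on which $\intG|V_g(e^{itH}u_0)(z)|^p\la z\ra^{pr}\,dz<\infty$. The goal is to produce a conic neighborhood $\Gamma'$ of $z_0$ on which $V_gu_0$ has the corresponding weighted integrability. Since $e^{itH}$ is invertible with inverse $e^{-itH}$, and the inverse propagator satisfies the analogous estimate $|k(-t,z,w)|\leq C\la w-\chi_{-t}(z)\ra^{-s}=C\la w-\chi_t^{-1}(z)\ra^{-s}$ (the flow at time $-t$ is $\chi_t^{-1}$), I would apply the same integral representation to recover $V_gu_0=V_g(e^{-itH}e^{itH}u_0)$. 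The microlocal step is to split the domain of integration into the cone where the kernel concentrates (the part of $z$-space mapped by $\chi_t^{-1}$ into $\Gamma'$, hence with $z$ near $\Gamma$) and its complement, where the rapid off-diagonal decay $\la w-\chi_t^{-1}(z)\ra^{-s}$ with $s>2d$ makes the contribution globally finite using only $u_0\in M^p_{-r}$. The homogeneity of $\chi_t$ at infinity guarantees that open cones map to open cones and that the diagonal concentration is conic-compatible, so the good cone $\Gamma$ at $\chi_t(z_0)$ pulls back to a good cone $\Gamma'=\chi_t^{-1}(\Gamma)$ at $z_0$; by symmetry the same argument with $t$ and $-t$ interchanged yields the opposite inclusion.

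The main obstacle I expect is the \emph{off-diagonal cone estimate}: controlling $\int_{\Gamma'}\la z\ra^{pr}\big|\intrdd k(w,z)V_gf(w)\,dw\big|^p\,dz$ when the inner integral runs over $w$ outside a conic neighborhood of $\Gamma'$, i.e.\ away from the diagonal $z\simeq\chi_t(w)$. Here one cannot use the mere integrability that sufficed for boundedness; one must quantify that when $z\in\Gamma'$ but $\chi_t(w)\notin\Gamma'$ (so $w$ is separated in angle from $\Gamma$), the distance $|z-\chi_t(w)|$ is comparable to $\max(|z|,|\chi_t(w)|)\simeq\max(\la z\ra,\la w\ra)$ up to constants, so that $\la z-\chi_t(w)\ra^{-s}$ provides decay in \emph{both} $z$ and $w$ simultaneously. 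This angular-separation lemma, combined with the hypothesis $0<2r<s-2d$ which ensures that after absorbing the weights $\la z\ra^{pr}$ and the dual weight $\la w\ra^{-pr}$ coming from $u_0\in M^p_{-r}$ one still has net decay exponent $s-2r>2d$, is what makes the cross term finite. The delicate point is that the single condition $2r<s-2d$ must simultaneously govern the forward weight, the input weight, and the integration in $2d$ variables; verifying that the exponents line up — and doing so uniformly in the splitting so that the two pieces recombine to the clean equality $WF^{p,r}_G(e^{itH}u_0)=\chi_t(WF^{p,r}_G(u_0))$ — is the technical heart of the argument.
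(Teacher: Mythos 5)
Your proposal is correct and follows essentially the same route as the paper: the boundedness \eqref{T1.5eq1} is obtained exactly as in Theorem \ref{contmp} (Schur test after transferring the weight across the kernel via Peetre's inequality and the bi-Lipschitz property of $\chi_t$), and the propagation \eqref{T1.5eq2} is proved as in Theorem \ref{5.3} by splitting the $w$-integration into a good cone and its complement, using the angular-separation bound $\la z-\chi_t(w)\ra\gtrsim\max\{\la z\ra,\la w\ra\}$ off the cone and the exponent bookkeeping $2r-s<-2d$, with the reverse inclusion obtained from the group property by applying the forward inclusion to $e^{-itH}u_0$. The two technical points you flag as the heart of the matter — the off-diagonal cone estimate and the role of $0<2r<s-2d$ — are precisely \eqref{5.7eq} and the two Young-inequality steps in the paper's proof.
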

Observe the more restrictive assumption  on $r$ for \eqref{T1.5eq2}, with respect to that for
\eqref{T1.5eq1}. 
\par
As an elementary example consider the perturbed harmonic oscillator (studied in Example 4 in the sequel)
\begin{equation}\label{5.31eq}
\begin{cases} i \partial_t
 u -\frac{1}{4\pi}\partial^2_x u+\pi x^2 u+ |\sin x|^\mu u=0\\
u(0,x)=u_0(x)
\end{cases}
\end{equation}
with $\mu>1$. We shall prove that $|\sin x|^\mu\in S^{\mu+1}_w$ and from Theorem \ref{T1.5} we have that the Cauchy problem is well-posed for $u_0\in M^p_{r}(\R)$, $|r|<\mu-2$ and the propagation of $\wpr\, (u(t,\cdot))$ for $t\in \bR$ takes place as in Theorem \ref{T1.5} for $0<r<\mu/2-1$, where 
\begin{equation}\label{5.20eq}
\chi_t(y,\eta)=\begin{pmatrix}(\cos
t)I&(-\sin t)I\\(\sin t)I&(\cos
t)I\end{pmatrix} \begin{pmatrix}y\\\eta\end{pmatrix}
\end{equation}
with $I$ being the identity matrix.
\par
Using \eqref{inteSs} and \eqref{T1.5eq2}, we may recapture the known results for the propagation in the case of a smooth potential, i.e. $\sigma\in S^0_{0,0}$.
We  define  the wave front set $WF_G(f)$ by stating  $z_0\notin WF_G (f)$ if there exists an
open conic set $\Gamma_{z_0}\subset \rdd$ containing $z_0$ such
that for every $r>0$
\begin{equation}\label{WFSeq}
|V_g f(z)|\leq C_r \la z\ra^{-r},\quad z\in \Gamma_{z_0}
\end{equation}
for a suitable $C_r>0$.
Then the estimate \eqref{KT1.1} is satisfied for every $s$ and from Theorem \ref{T1.5} we recapture for $u_0\in\cS'(\rd)$
\begin{equation}\label{WT1.3}
WF_G(e^{i t H} u_0)=\chi_t(WF_G (u_0)).
\end{equation}
This identity is contained in preceding results. Although it is impossible to do justice to the vast literature in this
connection, let us mention some of the related contributions.
The pioneering work is that of H\"{o}rmander \cite{hormanderglobalwfs91} 1991, who defined the  wave front set in \eqref{WFSeq} as well as its analytic version,  and proved  \eqref{WT1.3} in the case of the metaplectic operators (cf. \cite{folland89}).
For subsequent results providing \eqref{WT1.3} and its analytic-Gevrey version for general smooth symbols, let us refer to \cite{Hassel-Wunsch,ito,ito-nakamura,Martinez,Martinez2,Mizuhara,Nakamura,Nakamura2,wunsch}. The wave front sets introduced there under different names actually coincide with those of  H\"{o}rmander 1991, cf. \cite{RWwavefrontset}, \cite{sw} and  \cite{Cappiello-shulz}. Still concerning propagation of singularities in the case of  smooth or analytic symbols we refer to \cite{Craig-Kappler,Melrose,Robbiano-Zuily,Robbiano-Zuily2,Weinstein}. Besides, concerning global-in-time representations of $e^{it H}$, solving the problem of the caustics for smooth symbols, see \cite{wiener1,wiener3,wiener4,gz,tataru}.

Despite the abundance of contributions in the case when Hamiltonians and potentials are smooth, our study of propagation of singularities in the case of non-smooth potentials is new in literature, as far as we know. We hope, in future papers, to extend the analysis to non-smooth Hamiltonians as well, with applications to propagations for non linear Schr\"odinger equations. In such order of ideas,  time-frequency methods represent an important tool. Beside \cite{Wiener,fio3} see \cite{Benyi et all,locNC09,CGNR13,cn,fio5,fio1,bertinoro,CNEdcds,kki1,kki2,kki4,Miyachi-NicolaRivetti-Tabacco-Tomita,nicola,wh}.\par
The contents of the next sections are the following. In Section \ref{2}, after a survey on modulation spaces, Shubin classes and construction of propagators in their setting, we provide some improvements of the calculus in \cite{Wiener}
 for the classes $FIO(\chi,s)$, as preparation for the sequel. In Section $3$ we treat the unperturbed equation, giving a global construction of the propagator in terms of time-frequency analysis. In Section $4$ we add the non-smooth bounded perturbation, and we prove the main results of representation and continuity, stated before. The propagation result is proved in Section $5$, where we also give some examples.

\vskip0.3truecm \textbf{Notation.}
The Schwartz class is denoted by
$\sch(\Ren)$, the space of tempered
distributions by  $\sch'(\Ren)$.
The brackets $\la\cdot,\cdot \ra$  denote either the inner product on $\rd$ or  the extension to $\sch '
(\Ren)\times\sch (\Ren)$ of the inner
product $\la f,g\ra=\int f(t){\overline
{g(t)}}dt$ on $L^2(\Ren)$. The Fourier
transform is normalized to be ${\hat
  {f}}(\o)=\Fur f(\o)=\int
f(t)e^{-2\pi i\la  t,\o\ra}dt$.


We
shall use the notation
$A\lesssim B$ to express the inequality
$A\leq c B$ for a suitable
constant $c>0$, and  $A
\asymp B$  for the equivalence  $c^{-1}B\leq
A\leq c B$.

\vskip0.3truecm
\section{Preliminaries}\label{2}
  We recall the basic
concepts  of \tfa\ and  refer the  reader to \cite{book} for the full
details.
\subsection{The Short-time Fourier Transform }\label{2.1}
Consider a distribution $f\in\cS '(\rd)$ and a Schwartz function
$g\in\cS(\rd)\setminus\{0\}$ (the so-called {\it window}). The
short-time Fourier transform (STFT) of $f$ with respect to $g$ is defined in \eqref{STFT}.
 The  \stft\ is well-defined whenever  the bracket $\langle \cdot , \cdot \rangle$ makes sense for
dual pairs of function or distribution spaces, in particular for $f\in
\cS ' (\rd )$ and $g\in \cS (\rd )$, $f,g\in\lrd$. If $f,g\in\cS(\rd)$, then $V_gf\in\cS(\rdd)$.
\par
 We recall the following pointwise inequality of the \stft\
 \cite[Lemma 11.3.3]{book}, useful when one needs to change window functions.
 \begin{lemma}\label{changewind}
 If  $g_0,g_1,\gamma\in\cS(\rd)$ such
 that $\la \gamma, g_1\ra\not=0$ and
 $f\in\cS'(\rd)$,  then the inequality
 $$|V_{g_0} f(x,\xi)|\leq\frac1{|\la\gamma,g_1\ra|}(|V_{g_1} f|\ast|V_{g_0}\gamma|)(x,\xi)$$
 holds pointwise for all $(x,\xi)\in\rdd$.
 \end{lemma}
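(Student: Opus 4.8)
The plan is to combine the reconstruction formula \eqref{Invform} (in its two-window form) with the covariance of the STFT under time-frequency shifts. First I would record that, since $\la \gamma, g_1\ra \neq 0$, the inversion formula \eqref{Invform} admits the variant
$$f = \frac{1}{\la \gamma, g_1\ra} \intrdd V_{g_1} f(w)\, \pi(w)\gamma\, dw,$$
valid for $f \in \cS'(\rd)$, the integral being understood as a vector-valued (weak-$\ast$) integral in $\cS'(\rd)$. Applying $V_{g_0}$ to both sides and interchanging it with the integral would give
$$V_{g_0} f(z) = \frac{1}{\la \gamma, g_1\ra} \intrdd V_{g_1} f(w)\, V_{g_0}(\pi(w)\gamma)(z)\, dw, \quad z \in \rdd.$$

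The second ingredient is the covariance identity $|V_{g_0}(\pi(w)\gamma)(z)| = |V_{g_0}\gamma(z-w)|$, valid for all $w,z\in\rdd$. This follows from the composition rule $\pi(w)^\ast \pi(z) = c\,\pi(z-w)$ with $|c|=1$: using unitarity of $\pi(w)$,
$$V_{g_0}(\pi(w)\gamma)(z) = \la \pi(w)\gamma, \pi(z)g_0\ra = \la \gamma, \pi(w)^\ast\pi(z) g_0\ra = c\,\la \gamma, \pi(z-w) g_0\ra = c\, V_{g_0}\gamma(z-w).$$
Substituting this into the previous display, taking absolute values and estimating by the triangle inequality for integrals, I would obtain
$$|V_{g_0}f(z)| \leq \frac{1}{|\la \gamma, g_1\ra|} \intrdd |V_{g_1}f(w)|\,|V_{g_0}\gamma(z-w)|\,dw = \frac{1}{|\la \gamma, g_1\ra|}\bigl(|V_{g_1}f| \ast |V_{g_0}\gamma|\bigr)(z),$$
which is exactly the asserted pointwise bound.

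The only genuinely delicate point is justifying the passage of $V_{g_0}$ inside the vector-valued integral when $f$ is merely a tempered distribution. I would handle this by pairing against the fixed Schwartz function $\pi(z)g_0$: since $V_{g_0}h(z) = \la h, \pi(z)g_0\ra$ and $h \mapsto \la h, \pi(z)g_0\ra$ is continuous on $\cS'(\rd)$, the interchange reduces to a scalar identity in which Fubini applies, because $V_{g_1}f$ grows at most polynomially while $V_{g_0}\gamma \in \cS(\rdd)$. Everything else is routine and does not affect the modulus estimates, so the unimodular phase factor $c$ may be discarded throughout.
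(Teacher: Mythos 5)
Your argument is correct and is essentially the standard proof of this fact: the paper itself gives no proof but cites \cite[Lemma 11.3.3]{book}, and your route (two-window inversion formula, covariance $\pi(w)^\ast\pi(z)=c\,\pi(z-w)$ with $|c|=1$, then the triangle inequality, with the interchange justified by pairing against the fixed Schwartz function $\pi(z)g_0$ and the polynomial growth of $V_{g_1}f$) is exactly the one used there. Nothing further is needed.
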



\subsection{Modulation spaces and Shubin classes}\label{2.2}
Weighted modulation spaces measure the decay of the STFT on the time-frequency (phase space) plane and were introduced by Feichtinger in the 80's \cite{F1}.

\emph{Weight Functions.}  A weight function $v$ is submultiplicative if $ v(z_1+z_2)\leq v(z_1)v(z_2)$, for all $z_1,z_2\in\Renn.$  We consider the weight functions
\begin{equation}\label{weight} v_s(z)=\la z\ra^s=(1+|z|^2)^{\frac s 2},\quad s\in\R,
\end{equation}
which are submultiplicative for $s\geq0$.\par
 For $s\geq0$, we denote by $\mathcal{M}_{v_s}(\rdd)$ the space of $v_s$-moderate weights on $\rdd$; these  are measurable positive functions $m$ satisfying $m(z+\zeta)\leq C
v_s(z)m(\zeta)$ for every $z,\zeta\in\rdd$.

\begin{definition}  \label{prva}
Given  $g\in\cS(\rd)$, $s\geq0$, a  weight
function $m\in\mathcal{M}_{v_s}(\rdd)$, and $1\leq p,q\leq
\infty$, the {\it
  modulation space} $M^{p,q}_m(\Ren)$ consists of all tempered
distributions $f\in \cS' (\rd) $ such that $V_gf\in L^{p,q}_m(\Renn )$
(weighted mixed-norm spaces). The norm on $M^{p,q}_m(\rd)$ is
\begin{equation}\label{defmod}
\|f\|_{M^{p,q}_m}=\|V_gf\|_{L^{p,q}_m}=\left(\int_{\Ren}
  \left(\int_{\Ren}|V_gf(x,\xi)|^pm(x,\xi)^p\,
    dx\right)^{q/p}d\xi\right)^{1/q}  \,
\end{equation}
(obvious changes if $p=\infty$ or $q=\infty$).
\end{definition}
 When $p=q$, we simply write $M^{p}_m(\rd)$ instead of $M^{p,p}_m(\rd)$. The spaces $M^{p,q}_m(\rd)$ are Banach spaces and every nonzero $g\in M^{1}_{v_s}(\rd)$ yields an equivalent norm in \eqref{defmod} and so $M^{p,q}_m(\Ren)$ is independent on the choice of $g\in  M^{1}_{v_s}(\rd)$.

In particular,  we recover the H\"ormander
class
\begin{equation}\label{HC}S^0_{0,0}=\bigcap_{s\geq 0}M^{\infty}_{1\otimes v_s}(\rdd).\end{equation}
Note that, for any $1\leq p,q\leq\infty$,

\begin{equation}\label{HCbis}\bigcap_{s\geq 0}M^{p,q}_{ v_s}(\rd)=\cS(\rd),\quad \bigcup_{s\geq 0}M^{p,q}_{ v_{-s}}(\rd)=\cS'(\rd).\end{equation}
In the introduction we used the short notations $M^\infty_r(\rd)$ for $M^\infty_{v_r}(\rd)$ and $S^s_r$  for $M^\infty_{1\otimes v_s}(\rdd)$.
Fix $g\in\cS(\rd)\setminus\{0\}$. The adjoint operator of $V_g$,  defined by $\la V_g^\ast F, h\ra=\la F,V_g h\ra$, can be written as
 \begin{equation}\label{adj}V_g^\ast F=\intrdd F(x,\xi) \pi(x,\xi) g dx d\xi,
\end{equation}
 $ V_g^\ast$ maps  the Banach space $L^{p,q}_m(\rdd)$ into $M^{p,q}_m(\rd)$, in particular it maps $\cS(\rdd)$ into $\cS(\rd)$ and the same for their dual spaces. In particular, if $F=V_g f$ we obtain the  inversion formula for the STFT
 \begin{equation}\label{treduetre}
 {\rm Id}_{M^{p,q}_m}=\frac 1 {\|g\|_2^2} V_g^\ast V_g
 \end{equation}
and the same holds when replacing $M^{p,q}_m(\rd)$ by $\cS(\rd)$ or $\cS'(\rd)$.\par
In the subsequent Section $5$ we shall use the following properties.
\begin{lemma}\label{fabio} Consider $\mu>0$. Then the function $f(x)=|\sin\,x |^\mu\in M^{\infty}_{1\otimes v_{\mu+1}}(\bR)$.
\end{lemma}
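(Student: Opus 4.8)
The plan is to establish the pointwise estimate $|V_g f(x,\xi)|\lesssim\langle\xi\rangle^{-(\mu+1)}$ uniformly in $x\in\bR$, for one fixed Schwartz window $g$. Since $\cS(\bR)\subset M^1_{v_s}$ and the modulation norm is independent of the window $g\in M^1_{v_s}$, this is exactly the assertion $f\in M^\infty_{1\otimes v_{\mu+1}}(\bR)$, because $\|f\|_{M^\infty_{1\otimes v_{\mu+1}}}=\operatorname{ess\,sup}_{(x,\xi)}|V_gf(x,\xi)|\,\langle\xi\rangle^{\mu+1}$. For $|\xi|\le 1$ the bound is trivial, as $|V_gf(x,\xi)|\le\|f\|_\infty\|g\|_{L^1}\lesssim 1\asymp\langle\xi\rangle^{-(\mu+1)}$; the whole content is therefore the decay as $|\xi|\to\infty$, whose rate is governed by the local regularity of $f$ at the zeros $t\in\pi\bZ$ of $\sin t$.

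First I would localize around the singularities. Choosing a $\pi$-periodic smooth partition of unity $1=\sum_{k}\chi(\cdot-k\pi)$ with $\operatorname{supp}\chi\subset(-3\pi/4,3\pi/4)$, and using $|\sin t|=|\sin(t-k\pi)|$ together with $|\sin s|^\mu=|s|^\mu h_0(s)$, where $h_0(s)=|(\sin s)/s|^\mu$ is smooth on $(-\pi,\pi)$, one obtains $f(t)\chi(t-k\pi)=|t-k\pi|^\mu\Psi(t-k\pi)$ with a single $\Psi=h_0\chi\in C_c^\infty$ independent of $k$. After the substitution $t=s+k\pi$ this yields $V_gf(x,\xi)=\sum_k e^{-2\pi i\xi k\pi}G(\xi,x-k\pi)$ with
\[
G(\xi,y)=\int_{\bR}|s|^\mu\,a_y(s)\,e^{-2\pi i\xi s}\,ds,\qquad a_y(s)=\Psi(s)\overline{g(s-y)}.
\]
Since $\Psi$ is supported in $|s|\le 3\pi/4$ and $g$ is Schwartz, the amplitude satisfies $\|a_y\|_{C^M}\lesssim_{M,N}\langle y\rangle^{-N}$ for all $M,N$, uniformly in $y$.

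The core is then the single model estimate $|G(\xi,y)|\lesssim_{N}\langle\xi\rangle^{-(\mu+1)}\langle y\rangle^{-N}$. I would prove it by a dyadic decomposition of the singularity of $|s|^\mu$ at the critical scale $2^j\sim|\xi|$: writing $a_y(s)=\sum_j a_y(s)\beta(2^js)$ near the origin (plus a $C_c^\infty$ piece away from $0$, handled by ordinary non-stationary phase), one has on the shell $|s|\sim 2^{-j}$ both the trivial bound $\lesssim\|a_y\|_{C^0}\,2^{-j(\mu+1)}$ and, after $M$ integrations by parts, $\lesssim\|a_y\|_{C^M}\,2^{-j(\mu+1)}(2^j/|\xi|)^M$. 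Using the trivial bound on the shells with $2^j\gtrsim|\xi|$ and integration by parts with $M>\mu+1$ on the shells with $2^j\lesssim|\xi|$, both geometric series are dominated by the critical scale $2^j\sim|\xi|$ and sum to $\langle\xi\rangle^{-(\mu+1)}$, with constant controlled by finitely many $C^M$-norms of $a_y$, hence by $\langle y\rangle^{-N}$. Finally $\sum_k\langle x-k\pi\rangle^{-N}\lesssim 1$ uniformly in $x$ for $N\ge 2$, so $|V_gf(x,\xi)|\le\sum_k|G(\xi,x-k\pi)|\lesssim\langle\xi\rangle^{-(\mu+1)}$, as required.

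The main obstacle is precisely capturing the sharp fractional exponent $\mu+1$: naive repeated integration by parts gives only the integer rate $\langle\xi\rangle^{-\lceil\mu\rceil}$, which is insufficient when $\mu\notin\bZ$. The extra fractional gain is exactly what the critical-scale dyadic splitting produces, and keeping its constants linear in the $C^M$-norms of $a_y$ (so that the $x$-uniformity and the summation over $k$ survive) is the delicate bookkeeping. An alternative to the dyadic step is to invoke the classical identity $\widehat{|s|^\mu}=c_\mu|\xi|^{-\mu-1}$ for homogeneous distributions and write $G(\xi,\cdot)=c_\mu|\cdot|^{-\mu-1}\ast\widehat{a_y}$; since $\widehat{a_y}$ is Schwartz with norms $\lesssim_N\langle y\rangle^{-N}$, convolution then gives the decay, but this needs some care near $\xi=0$, so I would prefer the self-contained dyadic estimate.
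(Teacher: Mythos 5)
Your argument is correct, and the initial reduction coincides with the paper's: both localize to one period of $\sin$ and reduce everything to the decay in $\xi$ of $\int |s|^{\mu}a(s)e^{-2\pi i \xi s}\,ds$ for a smooth, compactly supported amplitude $a$ whose $C^M$-norms are under control. The differences are in the bookkeeping and, more substantially, in how that model integral is estimated. The paper picks a window supported in $[-\pi/4,\pi/4]$, so that $|V_gf(x,\xi)|$ is $\pi$-periodic in $x$ and a single period suffices; you keep a general Schwartz window and sum over all periods via the $\langle y\rangle^{-N}$ decay of the amplitudes, which is more writing but changes nothing. For the core estimate the paper splits at $s=0$, inserts $e^{-t}e^{t}$, and uses the exact Laplace-type identity $\int_0^{\infty}t^{\mu}e^{-t(1+2\pi i \xi)}\,dt=\Gamma(\mu+1)(1+2\pi i\xi)^{-\mu-1}$, so that the integral becomes a convolution of a kernel with the exact decay $\langle\xi\rangle^{-\mu-1}$ against a Schwartz function, finished by Peetre's and Young's inequalities. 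Your dyadic decomposition at the critical scale $2^{-j}\sim|\xi|^{-1}$ (trivial bound on the inner shells, $M>\mu+1$ integrations by parts on the outer ones) reaches the same sharp exponent by a longer but more robust route: it uses no closed-form Fourier transform and would survive replacing $|s|^{\mu}$ by any function with the same homogeneous derivative bounds on dyadic shells. The alternative you sketch at the end, convolving $\widehat{|s|^{\mu}}=c_{\mu}|\xi|^{-\mu-1}$ with a Schwartz function, is essentially the mechanism of the paper's Proposition \ref{fabio3} for homogeneous functions, and the $e^{-t}$ regularization in the paper's proof is exactly the device that disposes of the behaviour near $\xi=0$ that you flag as needing care.
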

\begin{proof} Consider a  window function $g\in\cC^\infty_0(\bR)$,  with supp\,$g\subset [-\pi/4,\pi/4]$ to compute the STFT $V_g f$ with
$f(x)=|\sin\,x |^\mu$. Then  $|V_g f(x,\xi)|$ is a periodic function of period $\pi$ in the $x$ variable. So
$$\| f\|_{M^\infty_{1\otimes \mu+1}}=\sup_{|x|\leq \pi/2} \sup_{\xi\in\bR}\la \xi\ra^{\mu+1} |V_g f|(x,\xi).$$
Now observe that supp $T_x g\subset [-3\pi/4,3\pi/4]$, for $x\in [-\pi/2,\pi/2]$, and on that interval $f(x)=|x|^\mu \f(x)$, with $\f\in\cC^\infty_0(\bR)$.
We can write,
$$V_g f(x,\xi)=\int_0^{+\infty} e^{-2\pi i t \xi} t^\mu\f(t) \overline{g(t-x)}\,dt+\int_{-\infty}^0 e^{-2\pi i t \xi} (-t)^\mu\f(t) \overline{g(t-x)}\,dt:=A+B.$$
So it suffices to estimate the integral $A$, the estimate of $B$ is analogous. Setting $F_x(t)=e^t\f(t) g(t-x)\in\cS(\bR)$ we observe that the family $\{F_x\}_{x\in[-\pi/2,\pi/2]}$ belongs to a bounded subset of $\cS(\bR)$. Now
\begin{align*} A&= \int_0^{+\infty} e^{-2\pi i t \xi} t^\mu e^{-t} e^t\f(t) g(t-x)\,dt\\
&=\frac{\Gamma(\mu+1)}{(1+2\pi i \xi)^{\mu+1}}\ast \widehat{F_x}(\xi),
\end{align*}
and this yields
$$\la \xi\ra^{\mu+1}|A|\lesssim \frac{\la \xi\ra^{\mu+1}}{(1+2\pi i \xi)^{\mu+1}}\ast (\la \xi\ra^{\mu+1}\widehat{F_x}(\xi))\in L^\infty(\bR)
$$
by Young's inequality, since the first factor of the convolution product is bounded and the second one lies in a bounded subset of $\cS(\bR)\subset L^1(\bR)$.
\end{proof}
\begin{corollary}\label{fabio2} Consider the symbol $\sigma(x,\xi)=|\sin\,x |^\mu$ on $\bR^2$. Then we have $\sigma\in M^{\infty}_{1\otimes v_{\mu+1}}(\bR^2)$.
\end{corollary}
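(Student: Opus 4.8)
The plan is to exploit the tensor-product structure $\sigma=f\otimes 1$, where $f(x)=|\sin x|^\mu$ and $1$ is the constant function in the $\xi$-variable, together with the window-independence of the modulation-space norm. Since $M^\infty_{1\otimes v_{\mu+1}}(\bR^2)$ can be computed with any nonzero window in $M^1_{v_{\mu+1}}(\bR^2)$, I would take a factored Schwartz window $\Psi=g\otimes h$ with $g,h\in\cS(\bR)$ (note $g\otimes h\in\cS(\bR^2)\subset M^1_{v_{\mu+1}}(\bR^2)$, so this is admissible), choosing $g$ to be the window already used in Lemma~\ref{fabio}. Writing the phase-space variables as $z=(z_1,z_2)$ (dual to $(x,\xi)$) and $\zeta=(\zeta_1,\zeta_2)$, and using that $\sigma$ depends on the first variable only, the STFT factors as
\[
V_\Psi\sigma(z,\zeta)=V_g f(z_1,\zeta_1)\,V_h 1(z_2,\zeta_2).
\]

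Next I would evaluate the second factor explicitly. Interpreting the constant $1\in\cS'(\bR)$ and substituting $u=v-z_2$ in the defining integral gives $V_h 1(z_2,\zeta_2)=e^{-2\pi i z_2\zeta_2}\,\overline{\hat h(-\zeta_2)}$, so that $|V_h 1(z_2,\zeta_2)|=|\hat h(-\zeta_2)|$ is independent of $z_2$. Since $h\in\cS(\bR)$ forces $\hat h\in\cS(\bR)$, this modulus decays faster than any power, and in particular $\sup_{\zeta_2}\la\zeta_2\ra^{\mu+1}|\hat h(-\zeta_2)|<\infty$. (Equivalently, one is observing that $1\in M^\infty_{1\otimes v_s}(\bR)$ for every $s\geq 0$.)

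Finally I would split the weight. From $\la\zeta\ra^2=1+|\zeta_1|^2+|\zeta_2|^2\leq\la\zeta_1\ra^2\la\zeta_2\ra^2$ one obtains $\la\zeta\ra^{\mu+1}\leq\la\zeta_1\ra^{\mu+1}\la\zeta_2\ra^{\mu+1}$, and therefore
\[
\la\zeta\ra^{\mu+1}|V_\Psi\sigma(z,\zeta)|\leq\Big(\la\zeta_1\ra^{\mu+1}|V_g f(z_1,\zeta_1)|\Big)\Big(\la\zeta_2\ra^{\mu+1}|\hat h(-\zeta_2)|\Big).
\]
The first bracket is bounded by $\|f\|_{M^\infty_{1\otimes v_{\mu+1}}(\bR)}$, which is finite by Lemma~\ref{fabio}, and the second is finite by the Schwartz decay of $\hat h$; taking the supremum over $z,\zeta$ yields $\sigma\in M^\infty_{1\otimes v_{\mu+1}}(\bR^2)$. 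The argument is essentially routine once the factored window is fixed; the only genuine point requiring care is the distributional computation of $V_h 1$ and the attendant recognition that its modulus is exactly $|\hat h|$ (so that the $z_2$-decay carries no information and only the $\zeta_2$-decay matters), after which the submultiplicative splitting $\la\zeta\ra\leq\la\zeta_1\ra\la\zeta_2\ra$ transfers the one-dimensional estimate of Lemma~\ref{fabio} to the two-dimensional symbol.
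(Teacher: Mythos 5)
Your proposal is correct and follows essentially the same route as the paper: a factored Schwartz window $\Psi=g\otimes h$ with $g$ the window of Lemma~\ref{fabio}, the tensor factorization $V_\Psi\sigma=V_gf\otimes V_h1$, and the splitting $\la\zeta\ra^{\mu+1}\leq\la\zeta_1\ra^{\mu+1}\la\zeta_2\ra^{\mu+1}$. The only cosmetic difference is that you compute $|V_h1|=|\hat h(-\cdot)|$ explicitly where the paper simply invokes $1\in S^0_{0,0}\subset M^\infty_{1\otimes v_s}(\bR)$.
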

\begin{proof}  It is an immediate consequence of Lemma \ref{fabio}. Indeed, taking $\psi(x,\xi)=g(x)\f(\xi)$, with $g$ being the $1$-dimensional window of the previous proof and $\f\in\cS(\bR)$, we have $V_\psi \sigma((x_1,x_2),(\xi_1,\xi_2))= V_g (|\sin(\cdot) |^\mu) (x_1,\xi_1)V_\f 1(x_2,\xi_2)$
and the thesis follows since $\la (\xi_1,\xi_2)\ra\leq \la \xi_1\ra \la \xi_2\ra$ and  $1\in S^0_{0,0}\subset M^\infty_{1\otimes v_s}(\bR)$, for every $s\geq0$.
\end{proof}
\begin{proposition}\label{fabio3}
Let $h\in\cC^\infty(\rd\setminus\{0\})$ be positively homogeneous of degree $r>0$, i.e.\ $h(\lambda x)=\lambda^r h(x)$ for $x\not=0$, $\lambda>0$, and $\chi\in C^\infty_0(\rd)$. Set $f=h\chi$. Then, for $\psi\in\cS(\rd)$ there exists a constant $C>0$ such that
\[
|V_\psi f(x,\xi)|\leq C(1+|\xi|)^{-r-d},\quad x,\xi\in\rd.
\]
 \end{proposition}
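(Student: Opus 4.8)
The plan is to read $V_\psi f(x,\xi)$ as a Fourier transform in $\xi$ and then exploit the homogeneity of $h$ through a dyadic decomposition anchored at the singular point $0$. Since the paper's normalizations give
$$V_\psi f(x,\xi)=\int_{\rd}h(t)\,\chi(t)\,\overline{\psi(t-x)}\,e^{-2\pi i\langle\xi,t\rangle}\,dt=\widehat{h\,\Psi_x}(\xi),\qquad \Psi_x:=\chi\,\overline{T_x\psi},$$
and $\chi\in\cC^\infty_0$ is fixed, we have $\mathrm{supp}\,\Psi_x\subset\mathrm{supp}\,\chi=:K$ for all $x$, while the Schwartz decay of $\psi$ makes the seminorms $\|\Psi_x\|_{\cC^k}$ bounded uniformly in $x$. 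Thus it suffices to bound $\widehat{h\Psi}(\xi)$ by $C(1+|\xi|)^{-r-d}$ for $\Psi\in\cC^\infty_0(K)$, with $C$ depending only on finitely many seminorms of $\Psi$; the resulting constant is then automatically uniform in $x$. Note that $h\Psi\in L^1$ (near the origin $|h\Psi|\lesssim|t|^r$ with $r>-d$), so $\widehat{h\Psi}$ is bounded and the estimate is trivial for $|\xi|\le1$; all the content is the decay for $|\xi|\ge1$.

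First I would isolate the singularity at the origin. Pick $\phi\in\cC^\infty_0$ equal to $1$ near $0$ and split $f=h\chi\phi+h\chi(1-\phi)$. The second summand lies in $\cC^\infty_0(\rd)$, since $h$ is smooth off the origin and $(1-\phi)$ vanishes near $0$; hence, by the fact recalled in Subsection~\ref{2.1} that $V_g$ maps $\cS\times\cS$ into $\cS(\rdd)$, the function $V_\psi(h\chi(1-\phi))$ is Schwartz on $\rdd$ and decays faster than any power of $|\xi|$, uniformly in $x$ (there is no $x$-dependence in $h\chi(1-\phi)$). Consequently I may assume $\mathrm{supp}\,\chi\subset B(0,1)$, so that only the small dyadic scales occur.

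The core step is a Littlewood--Paley decomposition centred at $0$. Choose $\theta\in\cC^\infty_0(\rd\setminus\{0\})$ supported in $\{1/2\le|t|\le2\}$ with $\sum_{j\le0}\theta(2^{-j}t)=1$ for $0<|t|\le1$, and set $g_j:=h\,\theta(2^{-j}\cdot)\,\Psi$, so that $h\Psi=\sum_{j\le0}g_j$ with $\mathrm{supp}\,g_j\subset\{|t|\sim2^j\}$. Homogeneity yields $|\partial^\alpha h(t)|\le C_\alpha|t|^{r-|\alpha|}$ (as $\partial^\alpha h$ is homogeneous of degree $r-|\alpha|$ and smooth on the unit sphere), while for $j\le0$ one has $|\partial^\beta(\theta(2^{-j}\cdot)\Psi)|\lesssim2^{-j|\beta|}$; Leibniz' rule then gives the scale-invariant bound
$$|\partial^\alpha g_j(t)|\lesssim2^{j(r-|\alpha|)},\qquad \mathrm{supp}\,g_j\subset\{|t|\sim2^j\},$$
with implied constants controlled by finitely many seminorms of $\Psi$. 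From the trivial estimate $\|g_j\|_{L^1}\lesssim2^{j(r+d)}$ (the factor $2^{jd}$ being the measure of the annulus) together with $N$ integrations by parts I obtain, for every $N$,
$$|\widehat{g_j}(\xi)|\lesssim2^{j(r+d)}\bigl(1+2^j|\xi|\bigr)^{-N}.$$

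Finally I would sum over $j\le0$ for $|\xi|\ge1$, splitting at the critical scale $2^j\sim|\xi|^{-1}$: in the range $2^j|\xi|\le1$ the terms are $\approx2^{j(r+d)}$ and sum geometrically (using $r+d>0$) to $\lesssim|\xi|^{-(r+d)}$, while in the range $2^j|\xi|\ge1$, choosing $N>r+d$, the terms are $\approx|\xi|^{-N}2^{j(r+d-N)}$ and sum to $\lesssim|\xi|^{-(r+d)}$ as well. Combining the two regimes with the trivial bound for $|\xi|\le1$ gives $|\widehat{h\Psi}(\xi)|\lesssim(1+|\xi|)^{-r-d}$ with a constant depending only on finitely many seminorms of $\Psi=\chi\,\overline{T_x\psi}$, hence uniform in $x$. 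I expect the \emph{main obstacle} to be exactly this balancing in the final summation: the target exponent $-r-d$ is produced by the competition between the $L^1$-mass $2^{j(r+d)}$ of each dyadic piece and the oscillatory gain $(2^j|\xi|)^{-N}$, and one must check that both the low- and high-scale tails are dominated by the critical scale, which is what forces $N>r+d$ and relies on $r+d>0$. Carrying all constants through seminorms of $\Psi$ secures the uniformity in $x$ demanded by the statement.
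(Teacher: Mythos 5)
Your proof is correct, and it takes a genuinely different route from the paper's. The paper works on the Fourier side: it invokes H\"ormander's results on Fourier transforms of homogeneous distributions to conclude that $\widehat{h}$ is homogeneous of degree $-r-d$ and smooth away from the origin, splits $\widehat{h}=\chi'\widehat{h}+(1-\chi')\widehat{h}$ into a compactly supported piece and a piece satisfying $|(1-\chi')\widehat{h}(\xi)|\leq C(1+|\xi|)^{-r-d}$, and then bounds $|V_\psi f(x,\xi)|$ by iterated convolutions in $\xi$ with $|\widehat{\chi}|$ and $|\widehat{\overline{\psi}}|$; the first piece gives rapid decay via $\mathcal{E}'\ast\cS\subset\cS$, and the second is handled by a Young-type convolution estimate as at the end of the proof of Lemma \ref{fabio}. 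You instead stay entirely in physical space: after writing $V_\psi f(x,\xi)=\widehat{h\Psi_x}(\xi)$ and disposing of the part away from the origin, you run a dyadic decomposition at the singularity, using only the pointwise bounds $|\partial^\alpha h(t)|\lesssim|t|^{r-|\alpha|}$ and integration by parts, and the exponent $-r-d$ emerges from balancing the $L^1$-mass $2^{j(r+d)}$ of each annular piece against the oscillatory gain $(2^j|\xi|)^{-N}$ at the critical scale $2^j\sim|\xi|^{-1}$. Your argument is more elementary and self-contained (no appeal to the theory of homogeneous distributions, and hence no need to worry about the choice of extension of $h$ across the origin), and it makes the uniformity in $x$ completely explicit by tracking finitely many seminorms of $\Psi_x=\chi\,\overline{T_x\psi}$; the paper's argument is shorter given the cited machinery and stays within the convolution-estimate framework used throughout that section. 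All the individual steps you outline (the almost-disjointness of the dyadic supports, the summability $\sum_j\|g_j\|_{L^1}<\infty$ justifying term-by-term Fourier transformation, and the conditions $r+d>0$ and $N>r+d$ in the two geometric sums) check out.
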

 \begin{proof}
 We know that the Fourier transform of $h$ is a homogeneous distribution of degree $-r-d$, smooth in $\rd\setminus\{0\}$ \cite[Vol.1, Theorems 7.1.16, 7.1.18]{hormander3}. Hence, if $\chi'\in C^\infty_0(\rd)$, $\chi=1$ in a neighborhood of the origin we have 
 \begin{equation}\label{equl}
 |(1-\chi'(\xi)) \widehat{h}(\xi)|\leq C(1+|\xi|)^{-r-d},\quad \xi\in\rd.
 \end{equation}
 On the other hand, by the very definition of the STFT we have 
 \[
 |V_\psi f(x,\xi)|\leq|\big((\chi'\widehat{h})\ast_{\xi}\widehat{\chi}\big)|\ast_
 \xi|\widehat{\overline{\psi}}|+ |\big((1-\chi')\widehat{h}\big)\ast_{\xi}\widehat{\chi}|\ast_{\xi}|\widehat{\overline{\psi}}|.
 \]
Since $\widehat{\overline{\psi}},\widehat{\chi}\in\cS(\rd)$, the first term in the right-hand side has a rapid decay, bacause $\mathcal{E}'\ast\cS\subset\cS$,  whereas the second term is estimated using \eqref{equl}, as at the end of the proof of Lemma \ref{fabio}.

  \end{proof}
\par

Here we are interested in operators with symbols in the Shubin classes (cf.\ \cite{Shubin91}, Helffer \cite{helffer84}); indeed,  we shall use them as symbol and phase spaces for the unperturbed initial value problem for Schr\"odinger equations.
\begin{definition}\label{shubinclass}
For $m\in\bR$, the class $\Gamma^m(\rdd)$ is the set of functions $a\in\mathcal{C}^\infty(\rdd)$ such that for every $\a \in \bZ^{2d}_+$ there exists a constant $C_\a >0$ such that:
$$|\partial^\a_z a(z)|\leq C_\a v_{m-|\a |}(z),\quad z\in\rdd,
$$
where we recall $v(z)=\la z\ra$ is defined in \eqref{weight}
\end{definition}

Consider $a_j\in\Gamma^{m_j}(\rdd)$ with $m_j$ being a decreasing sequence tending to $-\infty$. Then a function $a\in\cC^\infty(\rdd)$ satisfies
\begin{equation}\label{espansion}
a\sim \sum_{j=1}^{\infty} a_j
\end{equation}
if
$$\forall r\geq 2\quad a-\sum_{j=1}^{r-1}a_j\in \Gamma^{m_r}(\rdd).
$$

Namely, our symbol class well be a subclass of $\Gamma^m(\rdd)$, defined as follows \cite[Sec. 1.5 ]{helffer84}.
\begin{definition}
A function $a$ is in the class $\Gamma^{m,cl}(\rdd)$ if $a \in\Gamma^m(\rdd)$ and admits an asymptotic expansion
\begin{equation}\label{espansiono}
a\sim \sum_{j=0}^{\infty} a_{m-j},
\end{equation}
where $a_{m-j}\in\cC^\infty(\rdd)$  and satisfies $ a_{m-j}(\lambda z)=\lambda^{m-j}a_{m-j}(z)$, for $|z|\geq 1$ and $\lambda\geq 1$. The function $a_m$ corresponding to $j=0$ in the expansion \eqref{espansiono} is called \emph{principal symbol} of the symbol $a$.
\end{definition}
For $a\in \Gamma^m(\rdd)$, the corresponding pseudodifferential operator $a(x,D)$ is defined by \eqref{kohn}. 
\begin{definition}We say that $A\in G^{m}$ (resp.\ $A\in G^{m,cl}$) if its symbol satisfies  $a\in \Gamma^{m}(\rdd)$ (resp.\ $a\in \Gamma^{m,cl}(\rdd)$).
\end{definition}

A pseudodifferential operator $A\in  G^{m,cl}$ is called globally elliptic if there exist $R>0$, $C>0$ such that
\begin{equation}\label{ellitticsimb}
|a_m(z)|\geq C \la z\ra^m,\quad \mbox{for}\quad z\in\rdd,\,\,|z|\geq R,
\end{equation}
where $a_m$ is the principal symbol.

\subsection{Phase functions and canonical transformations}\label{2.3}
Let $a\in \Gamma^{2,cl}(\rdd)$ with real principal symbol $a_2$. The related classical evolution, given by the linear Hamilton-Jacobi
system, following our normalization can be written as
\begin{equation}\label{HS}
\begin{cases}
 2\pi \partial_t x(t,y,\eta)=-\nabla  _\xi a_2 ( x(t,y,\eta),\xi(t,y,\eta)) \\
 2\pi \partial_t\xi(t,y,\eta)=\nabla _x a_2 (x(t,y,\eta),\xi(t,y,\eta))\\
 x(0,y,\eta)=y,\\
 \xi(0,y,\eta)=\eta.
\end{cases}
\end{equation}
%

The solution $(x(t,y,\eta),\xi(t,y,\eta))$ exists for every $t\in\bR$. Indeed,
setting $u:=(x,\xi)$, $F(u):=(-\nabla_\xi a(u), \nabla _x a(u))$,
 the initial value problem \eqref{HS} can be rephrased as
\begin{equation}\label{IVP}
u^\prime (t)=F(u(t)),\quad u(t_0)=u_0,
\end{equation}
in the particular case $t_0=0$.
Observe that $ a\in \Gamma^{2,cl}(\rdd)$ implies $F_j\in  \Gamma^{1,cl}(\rdd)$, for $j=1,\dots,2d$ and $\partial^\a F_j\in  \Gamma^{0,cl}(\rdd)$, for every $|\a|>0$,  $j=1,\dots,2d$, hence in particular $F: \rdd\to \rdd$ is a Lipschitz continuous mapping. Thus the previous ODE  is an autonomous ODE with a mapping $F\in\cC^\infty(\rdd\to\rdd)$ having at most linear growth, hence $\|F(u)\|\lesssim 1+\|u \|$. This implies that for each $u_0\in\rdd$ and $t_0\in\bR$ there exists a unique classical global solution $u\,:\, \bR\to \rdd$ (in this case  $u\in \cC^\infty(\bR\to\rdd)$ since $F\in\cC^\infty(\rdd\to\rdd)$)  to \eqref{IVP}. Moreover the solution maps $S_{t_0}(t)\,:\rdd\to\cC^\infty(\bR\to\rdd)$,  defined by $S_{t_0}(t_0)u_0=u(t)$, and  $S_{t_0}(t_0)={\rm Id}$, the identity operator on $\rdd$, are  Lipschitz continuous mappings, obey the time translation invariance $S_{t_0}(t)=S_0(t-t_0)$ and the group laws
\begin{equation}\label{prodotto}
S_{0}(t)S_0(t')=S_0(t+t'),\quad S_0(0)={\rm Id}.
\end{equation}
Observe  that $S_{0}(t)$ is a bi-Lipschitz diffeomorphism with $S_{0}^{-1}(t)=S_{0}(-t)$.
To be consistent with the notations of the earlier paper \cite{Wiener},  we call the bi-Lipschitz diffeomorphism
\begin{equation}\label{mappachi}\chi_t(y,\eta):=S_0(t)(y,\eta),\quad (y,\eta)\in\rdd.\end{equation}

The theory of Hamilton-Jacobi allows to find a $T>0$ such that for $t\in ]-T,T[$ there exists a phase function  $\Phi(t,x,\eta)$, solution of the eiconal equation (cf.\ \cite[(3.2.12),(3.2.13)]{helffer84})
\begin{equation}\label{eiconal}
\begin{cases}
2\pi \partial_t\Phi+a_2 (x,\nabla_x\Phi)=0\\
\Phi(0,x,\eta)=x \eta
\end{cases}
\end{equation}
The phase $\Phi(t,x,\eta)\in\cC^\infty(]-T,T[,\Gamma^2(\rdd))$ is real-valued since the principal
symbol $a_2(x,\xi)$ is real-valued, moreover $\Phi$ fulfills  the condition
of non-degeneracy:
\begin{equation}\label{detmisto}
|\det \partial_{x,\eta}^2 \Phi(t,x,\eta)|\geq c>0,\quad
(t,x,\eta)\in ]-T,T[\times(\rdd\setminus\{0\}),
\end{equation}
after possibly shrinking $T>0$ (cf.\ \cite[Pages 142-143]{helffer84} and \cite{bertinoro}).

 The relation between the phase $\Phi$ and the canonical transformation $\chi$ is given by
\begin{equation}\label{rel-chi-phi}
(x,\nabla_x\Phi(t,x,\eta))=\chi_t(\nabla_\eta\Phi(t,x,\eta),\eta),\quad t\in ]-T,T[.
\end{equation}
In particular,
\begin{equation}\label{cantra} \left\{
                \begin{array}{l}
                y(t,x,\eta)=\nabla_{\eta}\Phi(t,x,\eta)
                \\
               \xi(t,x,\eta)=\nabla_{x}\Phi(t,x,\eta), \rule{0mm}{0.55cm}
                \end{array}
                \right.
\end{equation}
and there exists $\delta>0$ such that
\begin{equation}\label{detcond2}
   |\det\,\frac{\partial x}{\partial y}(t,y,\eta)|\geq \delta \quad t\in ]-T,T[.
\end{equation}
Observe that each component of $\chi_t$ is a function in $\in\cC^\infty(]-T,T[,\Gamma^1(\rdd))$, positively homogeneous of degree $1$ for $(y,\eta)$ large.  Moreover, using \eqref{prodotto} we observe that the same holds in fact for every $t\in\bR$.

For $t\in ]-T,T[$, the  phase function $\Phi(t,\cdot)$ above is  a \emph{tame} phase,  and similarly for the canonical transformation $\chi_t$, according to the following definition \cite[Definition 2.1]{Wiener}:
\begin{definition}\label{de}
A real and smooth phase function $\Phi(x,\eta)$ on $\rdd$ is called \emph{tame} if:\\
(i) For $z=\phas$,
\begin{equation}\label{phasedecay}
|\partial_z^\a \Phi(z)|\leq C_\a,\quad |\a|\geq 2;\end{equation}
(ii) There exists $c>0$ such that the following condition
of non-degeneracy holds:
\begin{equation}\label{detcond}
   |\det\,\partial^2_{x,\eta} \Phi(x,\o)|\geq c.
\end{equation}
 \par The mapping  defined by $(x,\xi)=\chi(y,\o)$, which solves the system
 \begin{equation}\label{cantra2} \left\{
                 \begin{array}{l}
                 y(x,\eta)=\nabla_{\eta}\Phi(x,\eta)
                 \\
                \xi(x,\eta)=\nabla_{x}\Phi(x,\eta), \rule{0mm}{0.55cm}
                 \end{array}
                 \right.
 \end{equation}
 is called \emph{tame} canonical transformation.
\end{definition}
Note that in this general context we have no assumption of homogeneity for large $(x,\eta)$, nevertheless the mapping $\chi$ is well-defined by the global inverse
function theorem, moreover $\chi$ is a smooth bi-Lipschitz canonical transformation (i.e.\ it preserves the symplectic form)  and satisfies, for $(x,\xi)=\chi(y,\o)$,
\begin{equation}\label{B2}
|\partial_{z}^\a x_i(z)|+|\partial_{z}^\a \xi_i(z)|\leq C_\a,\quad |\a|\geq 1,\,\,z=(y,\eta),\,\,i=1,\dots,d.\end{equation}
Finally, the mapping $\chi$ enjoys  \begin{equation}\label{detcond2?}
    |\det\,\frac{\partial x}{\partial y}(y,\eta)|\geq \delta
 \end{equation}
 (that is \eqref{detcond2} for the canonical transformations of the Hamilton-Jacobi theory), which allows to uniquely determine (up to a constant) the related tame phase function $\Phi_\chi$ (see \cite[Section 2]{Wiener}).

We shall refine and apply results  for tame canonical transformations in \cite{Wiener} to the special case of the canonical transformations coming from \eqref{HS}. First, we need to introduce the class of global FIOs which are the main ingredient of this study.

\subsection{The classes $FIO(\chi,s)$ of Fourier Integral Operators}
The definition of the class $FIO(\chi,s)$ was introduced in \cite{Wiener} and can be rephrased as follows.

\begin{definition}
Let $g\in\cS(\rd)$ be a non-zero window function and $s\in\bR$. Consider   a canonical transformation $\chi$ which is a smooth bi-Lipschitz diffeomorphism and satisfies \eqref{B2}. We say that  a
continuous linear operator $T:\cS(\rd)\to\cS'(\rd)$ is in the
class $FIO(\chi,s)$ if its (continuous) Gabor matrix  satisfies the decay
condition
\begin{equation}\label{asterisco}
|\langle T \pi(w) g,\pi(z)g\rangle|\leq {C}\langle z-\chi(w)\rangle^{-s},\qquad \forall z,w\in\rdd.
\end{equation}
\end{definition}
Note that we do not require \eqref{detcond2?} to be valid.\par
The class $\fiola = \bigcup _{\chi } FIO (\chi ,s)$ is  the union of
these classes where  $\chi$ runs over the  set of all  smooth bi-Lipschitz canonical transformations satisfying \eqref{B2}.

 Gabor frames decompositions of FIOs in \cite{Wiener} produce the following issues.
\begin{itemize}
    \item [(i)] \emph{Boundedness of $T$ on $M^p(\rd)$} (\cite[Theorem 3.4]{Wiener}):\\
    If $s>2d$ and $T\in FIO(\chi,s)$, then $T$ can be extended to a bounded operator on $M^p(\rd)$ (in particular on $\lrd$).
    \item[(ii)] \emph{The algebra property} (\cite[Theorem 3.6]{Wiener}): For $ i=1,2$,  $s>2d,$
\begin{equation}\label{algebra}T^{(i)}\in FIO(\chi_i,s)\quad \Rightarrow \quad T^{(1)}T^{(2)}\in
FIO(\chi_1\circ \chi_2, s) \, .\end{equation}
 \item[(iii)] \emph{The Wiener property} (\cite[Theorem 3.7]{Wiener}): If $s>2d$, $T\in FIO(\chi,s)$  and  $T$ is invertible on $L^2(\rd)$, then $T^{-1} \in FIO(\chi^{-1},s)$.
\end{itemize}
These three properties imply that the union $\fiola$ is a Wiener subalgebra of $\cL
(\lrd )$, the class of linear bounded operators on $\lrd$. Property (ii) can be refined as follows. 
\begin{lemma} For   $s>2d,$  $T^{(i)}\in FIO(\chi_i,s)$, $ i=1,2$, the continuous Gabor matrix of the composition $T^{(1)}T^{(2)}$ is controlled  by
\begin{equation}\label{controllo1}
|\langle T^{(1)}  T^{(2)}\pi(w) g,\pi(z)g\rangle|\leq C_0 C_1 C_2 \la z-\chi_1\circ\chi_2(w)\ra^{-s}, w,z\in\rdd,
\end{equation}
where $C_i>0$ is the constant  of $T^{(i)}$ in \eqref{asterisco}, $ i=1,2$, whereas $C_0>0$ depends only on $s$ and on the Lipschitz constants of $\chi_1$ and $\chi_1^{-1}$.
\end{lemma}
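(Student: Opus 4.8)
The plan is to reduce the estimate to a convolution inequality for the weight $\la\cdot\ra^{-s}$ on $\rdd$, while keeping careful track of which constants arise at each step. First I would express the Gabor matrix of the composition by inserting the continuous resolution of the identity \eqref{treduetre} between the two operators. Writing $k^{(i)}(z,u)=\la T^{(i)}\pi(u)g,\pi(z)g\ra$ and normalizing $\|g\|_2=1$, the inversion formula gives
\[
\la T^{(1)}T^{(2)}\pi(w)g,\pi(z)g\ra=\intrdd k^{(2)}(u,w)\,k^{(1)}(z,u)\,du,
\]
the interchange of $T^{(1)}$ with the integral being justified as in \cite{Wiener} by the continuity $T^{(1)}:\cS(\rd)\to\cS'(\rd)$. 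Taking absolute values and inserting the decay hypothesis \eqref{asterisco} for $T^{(1)}$ and $T^{(2)}$, with constants $C_1,C_2$, yields
\[
|\la T^{(1)}T^{(2)}\pi(w)g,\pi(z)g\ra|\leq C_1C_2\intrdd \la z-\chi_1(u)\ra^{-s}\la u-\chi_2(w)\ra^{-s}\,du .
\]

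Next I would bring the integral into convolution form. Setting $w'=\chi_2(w)$ and changing variables $\zeta=\chi_1(u)$, i.e.\ $u=\chi_1^{-1}(\zeta)$, the Jacobian $|\det D\chi_1^{-1}(\zeta)|$ is bounded for a.e.\ $\zeta$ by a power of the Lipschitz constant of $\chi_1^{-1}$. Since $\chi_1$ is Lipschitz, taking $a=\chi_1^{-1}(\zeta)$ and $b=w'$ in $|\chi_1(a)-\chi_1(b)|\leq \mathrm{Lip}(\chi_1)\,|a-b|$ gives $|\zeta-\chi_1(w')|\leq \mathrm{Lip}(\chi_1)\,|\chi_1^{-1}(\zeta)-w'|$, whence $\la \chi_1^{-1}(\zeta)-w'\ra^{-s}\lesssim \la \zeta-\chi_1(w')\ra^{-s}$ with a constant depending only on $s$ and $\mathrm{Lip}(\chi_1)$. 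Therefore the integral above is dominated by
\[
C_0'\intrdd \la z-\zeta\ra^{-s}\la \zeta-\chi_1(w')\ra^{-s}\,d\zeta ,
\]
where $C_0'$ depends only on $s$ and on the Lipschitz constants of $\chi_1$ and $\chi_1^{-1}$, and in particular is independent of $\chi_2$ and of $C_1,C_2$.

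Finally I would invoke the standard convolution estimate $\la\cdot\ra^{-s}\ast\la\cdot\ra^{-s}\lesssim\la\cdot\ra^{-s}$, valid on $\rdd$ precisely because $s>2d$: splitting the domain into $\{|\zeta|\leq |z-\chi_1(w')|/2\}$ and its complement and using $\intrdd\la\zeta\ra^{-s}\,d\zeta<\infty$ on each piece produces the bound $\la z-\chi_1(w')\ra^{-s}=\la z-\chi_1\circ\chi_2(w)\ra^{-s}$. Collecting the constants gives \eqref{controllo1} with $C_0$ depending only on $s$ and on $\mathrm{Lip}(\chi_1),\mathrm{Lip}(\chi_1^{-1})$. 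The only delicate point is the bookkeeping of constants in the change of variables: one must verify that both the Jacobian and the weight comparison introduce factors depending solely on the Lipschitz data of $\chi_1$ and $\chi_1^{-1}$, which is exactly what isolates the factor $C_0$ from $C_1C_2$ and thereby refines the plain algebra property \eqref{algebra}.
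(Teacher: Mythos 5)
Your proposal is correct and follows essentially the same route as the paper: insert the resolution of the identity $V_g^\ast V_g$, bound the composed Gabor matrix by the product of the two matrices, use the bi-Lipschitz property of $\chi_1$ and $\chi_1^{-1}$ to transfer the weight, and conclude with the subconvolutivity $\la\cdot\ra^{-s}\ast\la\cdot\ra^{-s}\lesssim\la\cdot\ra^{-s}$ for $s>2d$. The only cosmetic difference is that you implement the weight transfer via the substitution $\zeta=\chi_1(u)$ (picking up a harmless Jacobian), whereas the paper uses the pointwise inequality $\la z-\chi_1(y)\ra^{-s}\leq C(\chi_1)\la\chi_1^{-1}(z)-y\ra^{-s}$ directly; both isolate $C_0$ from $C_1C_2$ in the same way.
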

\begin{proof}
Consider $g\in\cS(\rd)$ with $\|g\|_2=1$. We write the product $T^{(1)}T^{(2)}$ as
$$T^{(1)}T^{(2)} = V^*_g V_g T^{(1)}T^{(2)}
V^*_g V_g = V^*_g (V_g T^{(1)}V^*_g ) ( V_gT^{(2)} V^*_g ) V_g \, .
$$
Thus the composition of operators corresponds to the multiplication of
their (continuous) Gabor matrices.   Using the decay estimates for  the continuous Gabor matrices of $T^{(i)}$, $i=1,2$,
\begin{align}\label{asteriscoalg}
|\langle T^{(1)}  T^{(2)}\pi(w) g,\pi(z)g\rangle|&\leq \intrdd
|\langle T^{(1)} \pi(w) g,\pi(y)g\rangle| |\langle T^{(2)} \pi(y) g,\pi(z)g\rangle| dy\notag\\
&\leq C_1 C_2\intrdd \la z-\chi_1(y)\ra^{-s} \la y-\chi_2(w)\ra^{-s} dy\notag\\
&\leq C_1 C_2 C(\chi_1) \intrdd \la \chi_1^{-1}(z)-y\ra^{-s} \la y-\chi_2(w)\ra^{-s} dy\notag\\
&\leq C_1 C_2 C(\chi_1) C_s \la \chi_1^{-1}(z)-\chi_2(w)\ra^{-s}\notag\\
&\leq C_1 C_2 C(\chi_1) C_s  C(\chi^{-1}_1)\la z-\chi_1\circ\chi_2(w)\ra^{-s}
\end{align}
for every $z,w\in\rdd$, $s>2d$,  where $C_1$ and $C_2$ are the controlling constants in \eqref{asterisco}  of the operators  $T^{(1)}$ and  $T^{(2)}$, and the bi-Lipschitz property of $\chi_1$ gives
 $$\la z-\chi_1(y)\ra^{-s}\leq   C(\chi_1)   \la \chi_1^{-1}(z)-y\ra^{-s},\quad \forall y,z\in\rdd$$
 and
 $$\la \chi_1^{-1}(y)-z\ra^{-s}\leq   C(\chi^{-1}_1)   \la y-\chi_1(z)\ra^{-s},\quad \forall y,z\in\rdd.$$
 Furthemore, we used  that $v_{-s}$ is subconvolutive for $s>2d$: $v_{-s}\ast v_{-s}\leq C_s v_{-s}$ \cite[Lemma 11.1.1(d)]{book}. If we call $C_0=C(\chi_1) C_s  C(\chi^{-1}_1)$, the claim is proved.
\end{proof}

By induction we immediately obtain
\begin{corollary} For $n\in\bN$, $n\geq2$, $s>2d$, $T^{(i)}\in FIO(\chi_i,s)$,  $i=1,\dots,n$, we have
\begin{equation}\label{GMn}
|\langle T^{(1)} T^{(2)}\cdots  T^{(n)}\pi(w) g,\pi(z)g\rangle|\leq C_0C_1\cdots C_n \la z-\chi_1\circ\chi_2\circ\cdots \circ\chi_{n}(w)\ra^{-s}.
\end{equation}
where $C_0$ depends on $s$ and on the Lipschitz constants of the mappings: $$\chi_1,\chi^{-1}_1,\chi_1\circ\chi_2,(\chi_1\circ\chi_2)^{-1},\dots,\chi_1\circ\chi_2\circ\cdots\circ\chi_{n-1},(\chi_1\circ\chi_2\circ\cdots\circ\chi_{n-1})^{-1}.$$
\end{corollary}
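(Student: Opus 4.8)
The plan is to argue by induction on $n$, using the preceding Lemma, i.e.\ the estimate \eqref{controllo1}, as both the base case and the engine of the inductive step. For $n=2$ the assertion is exactly \eqref{controllo1}, so nothing needs to be added there.

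For the inductive step, assume the bound holds for $n-1$ factors, and set $\psi_{n-1} := \chi_1 \circ \chi_2 \circ \cdots \circ \chi_{n-1}$. The inductive hypothesis states precisely that $T^{(1)} \cdots T^{(n-1)}$ has continuous Gabor matrix controlled by $\widetilde{C}_0 \, C_1 \cdots C_{n-1} \, \langle z - \psi_{n-1}(w) \rangle^{-s}$, that is, $T^{(1)} \cdots T^{(n-1)} \in FIO(\psi_{n-1}, s)$ with controlling constant $\widetilde{C}_0 \, C_1 \cdots C_{n-1}$, where $\widetilde{C}_0$ depends on $s$ and on the Lipschitz constants of $\chi_1, \chi_1^{-1}, \ldots, \psi_{n-2}, \psi_{n-2}^{-1}$. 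Since a composition of smooth bi-Lipschitz canonical transformations satisfying \eqref{B2} is again of the same type, $\psi_{n-1}$ is admissible and this membership is meaningful.

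Now I would regard $T^{(1)} \cdots T^{(n)} = \big(T^{(1)} \cdots T^{(n-1)}\big) T^{(n)}$ as a product of two operators, lying in $FIO(\psi_{n-1}, s)$ and $FIO(\chi_n, s)$ respectively, and apply the estimate \eqref{controllo1} of the Lemma to this pair. The decisive point is that, by the Lemma, the constant $C_0'$ it produces depends only on $s$ and on the Lipschitz constants of the first factor's canonical transformation $\psi_{n-1}$ and of its inverse $\psi_{n-1}^{-1}$, and not on $\chi_n$. Hence
\[
|\langle T^{(1)} \cdots T^{(n)} \pi(w) g, \pi(z) g \rangle| \leq C_0' \, \big(\widetilde{C}_0 \, C_1 \cdots C_{n-1}\big) \, C_n \, \langle z - (\psi_{n-1} \circ \chi_n)(w) \rangle^{-s}.
\]
Since $\psi_{n-1} \circ \chi_n = \chi_1 \circ \cdots \circ \chi_n$, putting $C_0 := C_0' \, \widetilde{C}_0$ yields \eqref{GMn}, and the dependence of $C_0$ now involves, beyond the constants already present in $\widetilde{C}_0$, the Lipschitz constants of $\psi_{n-1}$ and $\psi_{n-1}^{-1}$, which completes the stated list.

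The analytic content — the subconvolutivity of $v_{-s}$ for $s>2d$ and the transport of the weight $v_{-s}$ through the bi-Lipschitz maps — is entirely contained in the Lemma, so the only thing requiring attention is the bookkeeping of how $C_0$ accumulates across the induction. I expect no genuine obstacle here: each application of the Lemma contributes exactly the constants attached to the partial composition $\psi_{k}$ and $\psi_k^{-1}$, so the dependence list closes up automatically.
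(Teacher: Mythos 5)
Your proposal is correct and is exactly the induction the paper intends (the paper merely says ``By induction we immediately obtain''), with the left-bracketing $\bigl(T^{(1)}\cdots T^{(n-1)}\bigr)T^{(n)}$ correctly chosen so that each application of \eqref{controllo1} contributes only the Lipschitz constants of the partial composition $\chi_1\circ\cdots\circ\chi_{k}$ and its inverse, reproducing the stated dependence list for $C_0$.
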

\medskip

Observe that, using Schur's test and the same techniques as in the proof \cite[Theorem 3.4]{Wiener}, it is straightforward to obtain the following
weighted version of \cite[Theorem 3.4]{Wiener}. Hence we omit the proof.
\begin{theorem}\label{contmp} Let
$0\leq r<s-2d$, and
$\mu\in\cM_{v_r}$. For every
$1\leq p\leq\infty$, $T\in FIO(\chi,s)$ extends
to a continuous operator from
${M}^p_{\mu\circ\chi}$ into
${M}^p_{\mu}$.
\end{theorem}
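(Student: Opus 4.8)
The plan is to realise $T$ on the time-frequency side as an integral operator and reduce the claim to a weighted Schur estimate for its kernel. Fix a non-zero window $g\in\cS(\rd)$ with $\norm{g}_2=1$. By the continuous Gabor matrix representation \eqref{KGbM}, for $f\in\cS(\rd)$ we have $V_g(Tf)(z)=\intrdd k(w,z)\,V_gf(w)\,dw$, where $k(w,z)=\la T\pi(w)g,\pi(z)g\ra$ obeys $|k(w,z)|\leq C\la z-\chi(w)\ra^{-s}$ by the definition of $FIO(\chi,s)$. Since $\norm{h}_{M^p_m}\asymp\norm{m\cdot V_gh}_{L^p(\rdd)}$, the asserted continuity is equivalent to bounding, for $F(w):=\mu(\chi(w))V_gf(w)$,
$$\mu(z)V_g(Tf)(z)=\intrdd \frac{\mu(z)}{\mu(\chi(w))}\,k(w,z)\,F(w)\,dw=:\intrdd K(z,w)\,F(w)\,dw$$
on $L^p(\rdd)$, uniformly; this is exactly $\norm{Tf}_{M^p_\mu}\lesssim\norm{f}_{M^p_{\mu\circ\chi}}$.

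Next I would control the weight quotient. Because $\mu\in\cM_{v_r}$ is $v_r$-moderate, writing $z=(z-\chi(w))+\chi(w)$ gives $\mu(z)\leq C\,v_r(z-\chi(w))\,\mu(\chi(w))=C\la z-\chi(w)\ra^{r}\mu(\chi(w))$, hence $\mu(z)/\mu(\chi(w))\leq C\la z-\chi(w)\ra^{r}$. Combining with the Gabor-matrix decay yields the kernel bound
$$|K(z,w)|\leq C\la z-\chi(w)\ra^{r-s},\qquad z,w\in\rdd.$$
The same Lipschitz property shows $\mu\circ\chi$ is again a $v_r$-moderate weight, so $M^p_{\mu\circ\chi}$ is well-defined.

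Then I would apply Schur's test to $K$. Its two hypotheses require $\sup_z\intrdd|K(z,w)|\,dw$ and $\sup_w\intrdd|K(z,w)|\,dz$ to be finite. For the second, $\intrdd\la z-\chi(w)\ra^{r-s}\,dz=\intrdd\la v\ra^{r-s}\,dv<\infty$ precisely because $s-r>2d$, uniformly in $w$. For the first I use that $\chi$ is a bi-Lipschitz diffeomorphism satisfying \eqref{B2}: the change of variables $v=\chi(w)$ has Jacobian bounded above and below, so $\intrdd\la z-\chi(w)\ra^{r-s}\,dw\asymp\intrdd\la z-v\ra^{r-s}\,dv<\infty$, again uniformly in $z$, by the same integrability. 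Schur's test then gives boundedness of the kernel operator on $L^p(\rdd)$ for every $1\leq p\leq\infty$, with norm controlled by a power of each of the two Schur constants, establishing the desired estimate on $\cS(\rd)$.

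Finally I would extend $T$ to all of $M^p_{\mu\circ\chi}$: for $p<\infty$ this is immediate since $\cS(\rd)$ is dense and the estimate is continuous in the norm, while for $p=\infty$ one uses the direct $L^\infty$ Schur bound together with weak-$\ast$ density of $\cS$ in $M^\infty_{\mu\circ\chi}$. The only genuinely delicate point is the change-of-variables step: it relies essentially on the bi-Lipschitz bounds \eqref{B2} to keep the Schur constants uniform, and this is precisely where the hypothesis $r<s-2d$ (equivalently $s-r>2d$) enters, as the integrability threshold for $\la\cdot\ra^{r-s}$ on $\rdd$.
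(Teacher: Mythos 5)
Your argument is correct and is exactly the route the paper intends: the authors omit the proof of Theorem \ref{contmp}, stating only that it follows from Schur's test and the techniques of \cite[Theorem 3.4]{Wiener}, and your write-up supplies precisely those details (the weighted kernel bound $\la z-\chi(w)\ra^{r-s}$ via $v_r$-moderateness, the two Schur integrals made uniform through the bi-Lipschitz change of variables, and the integrability threshold $s-r>2d$). Nothing further is needed.
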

Let us underline  that $\mu\circ\chi\in\cM_{v_r}$, since
 $v_r\circ\chi\asymp v_r$,  due to the bi-Lipschitz property of $\chi$.\par
If $\chi=\mathrm{Id}$, the identity operator, then the corresponding Fourier integral operators are simply pseudodifferential operators, as already shown in \cite{GR}.  The characterization below is written for \psdo s in the Kohn-Nirenberg form $\sigma(x,D)$, but it works the same for any  $\tau$-form (in particular Weyl form $\sigma^w(x,D)$)  in which is written a pseudodifferential operator.
 \begin{proposition}
   \label{charpsdo}
 Fix $g \in\cS(\rd)$ and let $\sigma \in \cS'(\rdd )$. For $s\in\bR$, the symbol $\sigma \in
 M^{\infty}_{1\otimes v_s}(\rdd )$ \fif\
 \begin{equation}
   \label{eq:kh9}
   |\langle \sigma(x,D) \pi (w) g , \pi (z) g\rangle | \leq C \la z-w\ra^{-s} \qquad
   \forall w,z \in \rdd \, .
 \end{equation}
 \end{proposition}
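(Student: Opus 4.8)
\emph{The plan} is to reduce the proposition to the window-independence of the modulation-space norm recorded after Definition \ref{prva}: membership in $M^{\infty}_{1\otimes v_s}(\rdd)$ is exactly a decay condition $|V_\Phi\sigma(z^\ast,\zeta^\ast)|\lesssim\la\zeta^\ast\ra^{-s}$ on the STFT of $\sigma$ in the frequency variable $\zeta^\ast$, for any one nonzero window $\Phi\in\cS(\rdd)$. So the whole statement follows once I produce a \emph{pointwise identity} expressing the Gabor matrix of $\sigma(x,D)$ as such an STFT of $\sigma$, evaluated at a point depending linearly on $(w,z)$ whose frequency part has length $\la z-w\ra$.

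To obtain the identity I would insert the definitions. Writing $w=(w_1,w_2)$, $z=(z_1,z_2)$, $u=(x,\xi)$, and using $\widehat{\pi(w)g}(\xi)=e^{-2\pi i\la w_1,\xi-w_2\ra}\hat g(\xi-w_2)$ together with \eqref{kohn}, the entry $\la\sigma(x,D)\pi(w)g,\pi(z)g\ra$ becomes a double integral over $(x,\xi)$ of $\sigma(x,\xi)$ against the factors $\hat g(\xi-w_2)$, $\overline{g(x-z_1)}$ and the phase $e^{2\pi i\la x,\xi\ra}e^{-2\pi i\la w_1,\xi\ra}e^{-2\pi i\la z_2,x\ra}$. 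The product $\hat g(\xi-w_2)\overline{g(x-z_1)}$ centers a window at $z^\ast=(z_1,w_2)\in\rdd$, so I would change variables to $(a,b)=(x-z_1,\xi-w_2)$. After this shift the phase splits, up to a unimodular constant, into a part linear in $(a,b)$ of frequency $\zeta^\ast=(z_2-w_2,\,w_1-z_1)$ and a residual \emph{chirp} $e^{2\pi i\la a,b\ra}$; absorbing the chirp into the window, i.e.\ setting $\Phi(a,b)=g(a)\,\overline{\hat g(b)}\,e^{-2\pi i\la a,b\ra}\in\cS(\rdd)\setminus\{0\}$, yields
\begin{equation*}
|\la\sigma(x,D)\pi(w)g,\pi(z)g\ra|=|V_\Phi\sigma(z_1,w_2,\,z_2-w_2,\,w_1-z_1)|.
\end{equation*}
Two structural facts are then immediate: the frequency component of the evaluation point is $\zeta^\ast=(z_2-w_2,w_1-z_1)$, so $\la\zeta^\ast\ra=\la z-w\ra$ exactly; and the map $(w,z)\mapsto(z^\ast,\zeta^\ast)$ is a linear bijection of $\rdd\times\rdd$ onto itself.

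Combining these, the bound \eqref{eq:kh9} for all $w,z$ is equivalent to $\sup_{z^\ast}|V_\Phi\sigma(z^\ast,\zeta^\ast)|\lesssim\la\zeta^\ast\ra^{-s}$ for all $\zeta^\ast$, which is precisely $\sigma\in M^{\infty}_{1\otimes v_s}(\rdd)$ computed with the window $\Phi$. Since $\Phi$ is a nonzero Schwartz function, hence lies in $M^{1}_{v_s}(\rdd)$, the window-independence of the norm gives the equivalence for the arbitrary fixed $g\in\cS(\rd)$ of the statement, proving both implications simultaneously; if one prefers to pass between the decay condition written with $g$ and the symbol estimate written with $\Phi$, Lemma \ref{changewind} supplies the transfer. \textbf{The main obstacle} is exactly the derivation of this identity for the Kohn--Nirenberg quantization: in contrast to the Weyl form, \eqref{kohn} produces the genuine bilinear phase $e^{2\pi i\la x,\xi\ra}$, and the crux is to recognize that it can be incorporated into an admissible (Schwartz, nonvanishing) window $\Phi$ on $\rdd$, after which the remaining phase is linear and exposes the clean relation $\la\zeta^\ast\ra=\la z-w\ra$. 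The concluding remark that the argument works verbatim for any $\tau$-form, in particular the Weyl form, corresponds to replacing $\Phi$ by the analogous chirped window attached to that quantization.
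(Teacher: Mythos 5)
Your argument is correct and is essentially the proof the paper relies on: Proposition \ref{charpsdo} is quoted from \cite{GR} without proof, and the identity you derive --- the Gabor matrix entry equals $|V_\Phi\sigma|$ evaluated at $\big((z_1,w_2),(z_2-w_2,w_1-z_1)\big)$ with the chirped Rihaczek-type window $\Phi(a,b)=g(a)\overline{\hat g(b)}e^{-2\pi i\la a,b\ra}$ --- is exactly the $\Phi(x,\eta)=\la x,\eta\ra$, $\chi=\mathrm{Id}$ specialization of the Gabor-matrix/STFT formula the paper invokes in the Appendix (from \cite{fio5}, \cite{locNC09}), where the window family $\Psi_z$ becomes $z$-independent so no envelope argument is needed. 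Together with the bijectivity of $(w,z)\mapsto(z^\ast,\zeta^\ast)$, the exact identity $\la\zeta^\ast\ra=\la z-w\ra$, and the window-independence of the $M^{\infty}_{1\otimes v_s}$ norm, this yields both implications at once, as you say.
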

 \par
 Similarly, under additional assumptions on  the classes $FIO(\chi,s)$, their operators can be written in the following integral form, called \emph{FIOs of type I}:
 \begin{equation}\label{sei}
 I(\sigma,\Phi) f(x)=\int_{\rd} e^{2\pi i
  \Phi(x,\eta)}\sigma(x,\eta)\widehat{f}(\eta)\,d\eta,  \quad f\in\cS(\rd),
 \end{equation}
 where  $\sigma\in M^{\infty}_{1\otimes v_s}(\rdd)$) and $\Phi$ a tame phase function. This particular form is allowed starting from the class $FIO(\chi,s)$ whenever  the mapping $\chi$ enjoys the additional property  \eqref{detcond2?} as explained in the following characterization \cite[Theorem 4.3]{Wiener}.

\begin{theorem}\label{caraI}
Consider $g\in\cS(\rd)$ and   $s\geq 0$.
Let $I$ be a continuous linear operator $\cS(\rd)\to\cS'(\rd)$ and
$\chi$ be a tame canonical transformation satisfying \eqref{detcond2?}.  Then the following properties are
equivalent. \par {\rm (i)} $I=I(\sigma,\Phi_\chi)$ is a FIO of type
I for some $\sigma\in M^{\infty}_{1\otimes v_s}(\rdd)$. \par {\rm
(ii)} $I\in FIO(\chi ,s)$.\par
\end{theorem}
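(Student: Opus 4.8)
The plan is to prove the two implications separately, leaning on the algebra property \eqref{algebra}, the Wiener property (iii) and the pseudodifferential characterization of Proposition \ref{charpsdo}.

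First I would treat (i) $\Rightarrow$ (ii) by estimating the Gabor matrix directly. Inserting $f=\pi(w)g$ into \eqref{sei} and applying $V_g$ in the target variable, the entry $\la I(\sigma,\Phi_\chi)\pi(w)g,\pi(z)g\ra$ becomes a double oscillatory integral in $(x,\theta)$ with phase $\Phi_\chi(x,\theta)-\la z_2,x\ra-\la w_1,\theta\ra$ and amplitude $\sigma(x,\theta)\,\overline{g(x-z_1)}\,\hat g(\theta-w_2)$, where $z=(z_1,z_2)$, $w=(w_1,w_2)$. The stationarity conditions $\nabla_x\Phi_\chi=z_2$ and $\nabla_\theta\Phi_\chi=w_1$, read through \eqref{cantra2}, force $z=\chi(w)$ at criticality, and since $g,\hat g$ confine $x$ near $z_1$ and $\theta$ near $w_2$, the integral is sizeable only near $z=\chi(w)$. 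Away from this set, repeated integration by parts---legitimate because of the tame bounds \eqref{phasedecay} and the nondegeneracy \eqref{detcond}---gives fast decay in $z-\chi(w)$. To pin down exactly the exponent $s$ it is cleanest to expand $\sigma$ through the inversion formula \eqref{treduetre}, writing it as a superposition $\int V_\psi\sigma(u,\zeta)\,\pi(u,\zeta)\psi\,du\,d\zeta$ of time-frequency shifts weighted by $V_\psi\sigma$, which by the hypothesis $\sigma\in M^{\infty}_{1\otimes v_s}$ satisfies $|V_\psi\sigma(u,\zeta)|\lesssim\la\zeta\ra^{-s}$; estimating the Gabor matrix of each elementary operator $I(\pi(u,\zeta)\psi,\Phi_\chi)$ and integrating against this weight reproduces \eqref{asterisco} with the sharp exponent $s$.

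For the converse (ii) $\Rightarrow$ (i) I would reduce to the pseudodifferential case. Because $\chi$ obeys \eqref{detcond2?}, I can fix an invertible model operator $T_0\in FIO(\chi,s')$ for every $s'$---for instance the type I operator $I(1,\Phi_\chi)$, which lies in $FIO(\chi,s')$ for all $s'$ by the already proven direction since $1\in S^0_{0,0}$, suitably corrected so as to be invertible on $L^2$ via a parametrix built from $\Phi_{\chi^{-1}}$ and a Neumann series. The Wiener property (iii) then yields $T_0^{-1}\in FIO(\chi^{-1},s)$, and the algebra property \eqref{algebra} gives $T_0^{-1}I\in FIO(\chi^{-1}\circ\chi,s)=FIO(\mathrm{Id},s)$. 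By Proposition \ref{charpsdo} this operator is a pseudodifferential operator $b(x,D)$ with $b\in M^{\infty}_{1\otimes v_s}$, so that $I=T_0\,b(x,D)$. It then remains to show that the product of the type I FIO $T_0$ (phase $\Phi_\chi$) with a pseudodifferential operator is again a type I FIO with the same phase $\Phi_\chi$ and a symbol $\sigma\in M^{\infty}_{1\otimes v_s}$, which is exactly the representation claimed in (i).

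The hard part is this last composition step: establishing that $I(\sigma_0,\Phi_\chi)\,b(x,D)$ retains the form \eqref{sei} with phase $\Phi_\chi$ and a symbol controlled in $M^{\infty}_{1\otimes v_s}$ requires a symbolic calculus adapted to the modulation-space classes, carried out at the level of the amplitude $e^{2\pi i\Phi_\chi}\sigma_0$ by realizing the composition as an oscillatory integral and absorbing the extra phase factors through \eqref{detcond} and \eqref{phasedecay}---and this is delicate precisely because the symbols here are only of $M^{\infty}_{1\otimes v_s}$ type and need not be differentiable. A secondary point is to secure the invertibility of the model operator inside the Wiener algebra rather than merely on $L^2$; here the ellipticity forced by \eqref{detcond2?} together with the inverse-closedness of $FIO(\mathrm{Id},s)$ is what makes the reduction legitimate.
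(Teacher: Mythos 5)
There is a genuine gap in your argument for (ii) $\Rightarrow$ (i), and it occurs at two levels. First, the reduction through an invertible model operator leans on the algebra property \eqref{algebra} and the Wiener property (iii), both of which are only available for $s>2d$ (as is the $L^2$-boundedness needed even to speak of invertibility on $L^2$); the theorem, however, is asserted for every $s\geq 0$, so this route cannot cover the stated range. Second, and more seriously, the step you yourself flag as ``the hard part'' --- that $I(\sigma_0,\Phi_\chi)\,b(x,D)$ is again a type I FIO with phase $\Phi_\chi$ and symbol in $M^{\infty}_{1\otimes v_s}$ --- is not an auxiliary lemma but essentially the implication being proved: the natural way to establish it is to note that the product lies in $FIO(\chi,s)$ by \eqref{algebra} and then invoke the very characterization (ii) $\Rightarrow$ (i). Deferring it to an unspecified symbolic calculus for non-differentiable $M^{\infty}_{1\otimes v_s}$ symbols leaves the proof circular in effect. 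The paper's actual argument avoids all of this by an exact identity (from \cite[Prop.~3.2]{fio5} and \cite[Section 6]{fio1}): setting $\Psi_z(w)=e^{2\pi i\Phi_{2,z}(w)}\big(\overline g\otimes\widehat g\big)(w)$, with $\Phi_{2,z}$ the second-order Taylor remainder of the phase, one has
\[
|\langle I\pi(x,\eta)g,\pi(x',\eta')g\rangle| = \big|V_{\Psi_{(x',\eta)}}\sigma\big((x',\eta),\,(\eta'-\nabla_x\Phi(x',\eta),\,x-\nabla_\eta\Phi(x',\eta))\big)\big|,
\]
so that, after the change of variables furnished by \eqref{cantra2} and \cite[Lemma 4.2]{Wiener}, the decay \eqref{asterisco} of the Gabor matrix is literally equivalent to $\sup_u\langle w\rangle^s|V_{\Psi_u}\sigma(u,w)|<\infty$. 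The only technical input is that the family $\{\Psi_u\}$ admits a joint time-frequency envelope, whence this last condition is equivalent to $\sigma\in M^{\infty}_{1\otimes v_s}(\rdd)$. Both implications then follow at once, for all $s\geq 0$, with no recourse to the Wiener algebra structure and no composition calculus.

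Your sketch of (i) $\Rightarrow$ (ii) is closer to the mark, with one caveat: a direct integration by parts on the amplitude is not legitimate, since $\sigma\in M^{\infty}_{1\otimes v_s}$ need not be differentiable (the paper stresses that differentiability fails already for $s\leq 2d+1$). Your patch --- superposing time-frequency shifts of a Schwartz window via the inversion formula \eqref{treduetre} and weighting by $|V_\psi\sigma(u,\zeta)|\lesssim\langle\zeta\rangle^{-s}$ --- is a workable substitute and is in the same spirit as the identity above, but you would still need to verify the exponent bookkeeping for the elementary operators $I(\pi(u,\zeta)\psi,\Phi_\chi)$, which is precisely what the exact identity delivers for free. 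I would encourage you to locate and prove that identity: it is the missing key lemma, and once you have it the two directions of the theorem become two readings of the same formula.
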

Moreover, gluing together the results \cite[Theorem 3.3]{locNC09} and \cite[Theorem 4.3]{Wiener} we observe that
the constant $C$ in \eqref{asterisco} satisfies
\begin{equation}\label{cost}
C\asymp \|\sigma\|_{M^{\infty}_{1\otimes v_s}}.
\end{equation}
For $\chi={\rm Id}$ we recapture the characterization for \psdo s of Proposition \ref{charpsdo}.

Since we shall apply our results to the mappings $\chi_t(x,\eta)$ coming from the Hamilton-Jacobi system \eqref{HS}, we need to be more precise on the estimate \eqref{cost}: it is important to see how  the constants involved in the equivalence depend on the time variable $t$.
It amounts rewriting the proofs of the results cited above for the special case of a phase function
$\Phi\in \cC^\infty(]-T,T[, \Gamma^2(\rdd))$ and following the time variable $t$.  We state the result here and we refer to the Appendix for a sketch  the main points of the proofs, leaving the details to the interested reader.
\begin{theorem}\label{caraIt}
Consider $g\in\cS(\rd)$,   $s\geq 0$, and $T>0$ such that in $]-T,T[$ the equation \eqref{eiconal} is solved by the tame phase $\Phi\in \cC^\infty(]-T,T[, \Gamma^2(\rdd))$. Let   $\chi_t$ be the related tame canonical transformation in \eqref{rel-chi-phi}. Let $I$ be a continuous linear operator $\cS(\rd)\to\cS'(\rd)$. Then the following are equivalent:
\par {\rm (i)} $I=I(\sigma_t,\Phi_{\chi_t})$ is a FIO of type
I for some $\sigma_t\in M^{\infty}_{1\otimes v_s}(\rdd)$ such that
\begin{equation}
\label{controllosigma}
\|\sigma_t\|_{ M^{\infty}_{1\otimes v_s}}\leq H(t)\in\cC(]-T,T[).
\end{equation} \par {\rm
(ii)} $I\in FIO(\chi_t,s)$
 and the constant $C=C(t)$ in \eqref{asterisco} is in $\cC(]-T,T[)$.
\end{theorem}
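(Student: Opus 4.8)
For each fixed $t\in\,]-T,T[$, $\Phi(t,\cdot)$ is a tame phase and $\chi_t$ a tame canonical transformation satisfying \eqref{detcond2}, so the pointwise equivalence (i)$\Leftrightarrow$(ii) is already supplied by Theorem \ref{caraI}. The genuinely new content is the \emph{quantitative} statement: that the symbol norm $\|\sigma_t\|_{M^\infty_{1\otimes v_s}}$ and the Gabor-matrix constant $C(t)$ of \eqref{asterisco} can be chosen continuous in $t$. By \eqref{cost} these quantities are comparable at each fixed $t$; the plan is to reinspect the two estimates hidden in that comparison, isolate how their constants depend on the phase $\Phi(t,\cdot)$, and then use $t\mapsto\Phi(t,\cdot)\in\cC^\infty(]-T,T[,\Gamma^2(\rdd))$ to conclude continuity.

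For (i)$\Rightarrow$(ii) I would write the Gabor matrix $\la I(\sigma_t,\Phi_{\chi_t})\pi(w)g,\pi(z)g\ra$ of the type I operator \eqref{sei} as an oscillatory integral carrying $\sigma_t$, the factor $e^{2\pi i\Phi_{\chi_t}}$ and two copies of $g$, and estimate it as in \cite{Wiener}. The change of variables and repeated integration by parts there use only the non-degeneracy lower bound in \eqref{detmisto} and finitely many of the derivative bounds \eqref{phasedecay} on $\partial^\alpha\Phi$, $|\alpha|\geq2$; the output is a majorant $C(t)\,\|\sigma_t\|_{M^\infty_{1\otimes v_s}}\,\la z-\chi_t(w)\ra^{-s}$ with $C(t)$ an explicit continuous function of those finitely many $\Gamma^2$-seminorms of $\Phi(t,\cdot)$, of the constant $c$ in \eqref{detmisto}, and of the fixed seminorms of $g$. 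Together with the hypothesis $\|\sigma_t\|_{M^\infty_{1\otimes v_s}}\leq H(t)\in\cC(]-T,T[)$, the constant in \eqref{asterisco} is then continuous in $t$.

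For the converse I would run the reconstruction of \cite[Theorem 4.3]{Wiener} backward: the same argument produces a two-sided pointwise comparison between the Gabor matrix of $I$ and (a convolution of) $|V_\psi\sigma_t|$ transported along $\chi_t$, so the decay of the former forces $|V_\psi\sigma_t(z,\zeta)|\lesssim\la\zeta\ra^{-s}$ and hence controls $\|\sigma_t\|_{M^\infty_{1\otimes v_s}}$ --- this being the half of \eqref{cost} relevant here. Tracking the constants through that comparison and through the composition estimates of which \eqref{controllo1} is a representative, $\|\sigma_t\|_{M^\infty_{1\otimes v_s}}$ is majorized by $C(t)$ times a factor assembled from the bi-Lipschitz constants of $\chi_t,\chi_t^{-1}$ and from finitely many $\Gamma^2$-seminorms of $\Phi_{\chi_t}$; taking this product for $H(t)$ gives the required continuous majorant once $C(t)\in\cC(]-T,T[)$.

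The heart of the matter, and the only real obstacle, is the continuity in $t$ of all constants just produced, which I would reduce to three points. First, $\Phi\in\cC^\infty(]-T,T[,\Gamma^2(\rdd))$ means $t\mapsto\Phi(t,\cdot)$ is continuous into the Fr\'echet space $\Gamma^2(\rdd)$, so each of its seminorms --- hence each tame constant $C_\alpha$ in \eqref{phasedecay} --- is continuous in $t$. Second, $\det\partial^2_{x,\eta}\Phi(t,x,\eta)$ is continuous in $t$ and, by \eqref{detmisto}, bounded away from zero uniformly on compact subintervals, so the non-degeneracy constant may be taken continuous. Third, the bi-Lipschitz constants of $\chi_t$ and $\chi_t^{-1}$ vary continuously, inheriting this from the smooth dependence on $t$ of the Hamilton flow \eqref{HS} and of the relation \eqref{rel-chi-phi}. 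Since every constant above is an explicit continuous function of these data, the composites $C(t)$ and $H(t)$ are continuous. The single delicate point is that both estimates must invoke only \emph{finitely many} of the $\Gamma^2$-seminorms: a supremum over an infinite family need not be continuous, and ensuring it never arises is exactly what ``following the time variable'' through the proofs of \cite{Wiener} accomplishes.
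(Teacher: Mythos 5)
Your proposal is correct and follows essentially the same route as the paper's Appendix proof: reduce to the time-independent characterization of \cite[Theorem 4.3]{Wiener} and then track every constant through that argument, observing that each is a continuous function of finitely many $\Gamma^2$-seminorms of $\Phi(t,\cdot)$, of the non-degeneracy constant in \eqref{detmisto}, and of the bi-Lipschitz constants of $\chi_t$ and $\chi_t^{-1}$, all of which depend continuously on $t$. The only cosmetic difference is that the paper's underlying mechanism is the exact identity expressing the Gabor matrix as $|V_{\Psi_u(t,\cdot)}\sigma_t|$ evaluated at transported points (with the $t$-dependent window family $\Psi_u$ and its joint time-frequency envelope, controlled by a polynomial in finitely many derivatives of $\Phi_{2,z}$), rather than a direct integration-by-parts estimate, but the constants that arise are the same and your continuity argument applies verbatim.
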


\section{Unperturbed Schr\"odinger Equations}

The previous theory applies in the study of the Cauchy problem for linear Schr\"odinger equations. First, consider the unperturbed case:
\begin{equation}\label{cpA}
\begin{cases}
i\partial_t u+A u=0\\
u(0,x)=u_0(x),
\end{cases}
\end{equation}
with $x\in\rd$, $u_0\in\cS(\rd)$. The operator $A=a(x,D)\in G^{2,cl}$ is a formally  self-adjoint pseudodifferential operator in the Kohn-Nirenberg form. This means that the symbol $a\in \Gamma^{2,cl}(\rdd)$ has the expansion
\begin{equation}\label{poly}
a(x,\xi)\sim \sum_{j=0}^\infty {a}_{2-j}(x,\xi),
\end{equation}
where the principal symbol $a_2(x,\xi)$ is real-valued,
since $A$ is self-adjoint. The problem \eqref{cpA} is forward and
backward well-posed in $\cS(\rd)$ and the corresponding evolution operator
$e^{it A}$, acting from $\cS(\rd)$ into $\cS(\rd)$, extends to
$L^2$-isometries \cite{helffer84}.

The classical evolution \eqref{HS}
has the solution $(x(t),\xi(t))=\chi_t(y,\eta)$ in \eqref{mappachi} and for a suitable $T>0$ and  $t\in]-T,T[$
the evolution operator $e^{it A}$ can be well approximated by a FIO of type I, as expressed in  \cite[Proposition 3.1.1]{helffer84} for the special case of elliptic operators (that is operators whose corresponding principal symbols satisfy \eqref{ellitticsimb}), but still valid  without the assumption \eqref{ellitticsimb}, as observed in \cite[Section 5.3]{bertinoro}). In our framework the result \cite[Proposition 3.1.1]{helffer84} can be rephrased as follows.
\begin{proposition}\label{Prop31}
Given the  Cauchy problem \eqref{cpA} with $a(x,D)$ as above, then there exists a $T>0$, a symbol $\sigma(t,x,\eta)\in \cC^\infty(]-T,T[,\Gamma^{0}(\rdd))$ a real-valued phase function $\Phi\in\cC^\infty(]-T,T[, \Gamma^2(\rdd))$ satisfying
\eqref{eiconal} and \eqref{detmisto} such that the evolution operator can be written as
 \begin{equation}\label{soluzioneA}
 (e^{i t A}u_0)(t,x)=(F_tu_0)(t,x)+(R_t u_0)(t,x),
 \end{equation}
 where $F_t$ is the FIO of type I
 \begin{equation}\label{FIO1}
 (F_tu_0)(t,x)=\intrd e^{2\pi i\Phi(t,x,\eta)} \sigma(t,x,\eta){\widehat
  {u_0}}(\eta)d\eta
  \end{equation}
 and the operator $R_t$ has kernel in $\cC^\infty(]-T,T[,\cS(\rdd))$ (thus $R_t$ is regularizing, i.e., $R_t: \cS'(\rd) \to
 \cS(\rd)$.)
\end{proposition}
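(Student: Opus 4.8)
The plan is to carry out the standard geometric-optics (WKB) parametrix construction for $e^{itA}$, but performed entirely inside the global Shubin calculus $\Gamma^m(\rdd)$ rather than the local H\"ormander classes, so that all symbol estimates hold uniformly for large $(x,\eta)$ and depend smoothly on $t$. The phase is already in hand: by the Hamilton--Jacobi theory recalled in Subsection \ref{2.3}, there is a $T>0$ such that for $t\in\,]-T,T[$ the eikonal equation \eqref{eiconal} has a real solution $\Phi\in\cC^\infty(]-T,T[,\Gamma^2(\rdd))$ obeying the non-degeneracy \eqref{detmisto}, built from the bicharacteristic flow $\chi_t$ of \eqref{HS}. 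It therefore remains to determine the amplitude $\sigma$ and to control the error between $F_t$ and the true propagator.

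First I would insert the ansatz \eqref{FIO1} and compute $(i\partial_t+a(x,D))F_t u_0$. Applying $a(x,D)$ to the oscillatory integral \eqref{FIO1} and expanding by the Kuranishi/stationary-phase asymptotics, valid in the global classes $\Gamma^m$, one obtains again a FIO of type I with the \emph{same} phase $\Phi$ and a new amplitude admitting an asymptotic expansion whose leading (order $2$) term is $a_2(x,\nabla_x\Phi)\,\sigma$, the lower-order terms involving $\partial_\xi^\alpha a$ evaluated at $\nabla_x\Phi$ paired with derivatives of $\sigma$. Combined with the contribution produced by $i\partial_t$ acting on the exponential $e^{2\pi i\Phi}$, the eikonal equation \eqref{eiconal} makes this order-$2$ part vanish identically, leaving a residual symbol of order $\leq 1$.

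Next I would solve for $\sigma$ through a hierarchy of transport equations. Writing $\sigma\sim\sum_{j\geq0}\sigma_{-j}$ with $\sigma_{-j}\in\cC^\infty(]-T,T[,\Gamma^{-j}(\rdd))$, the requirement that the residual symbol vanish order by order yields a first-order linear transport ODE for $\sigma_0$ along the bicharacteristics, with initial datum $\sigma_0(0,x,\eta)=1$ so that $F_0=\mathrm{Id}$, and inhomogeneous transport equations for the lower-order terms whose sources are built from $\sigma_0,\dots,\sigma_{-j+1}$. Since the Hamiltonian flow is complete on $]-T,T[$ (Subsection \ref{2.3}), each transport equation is \emph{globally} solvable and preserves the class $\Gamma^{-j}$, smoothly in $t$. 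A Borel summation then produces $\sigma\in\cC^\infty(]-T,T[,\Gamma^0(\rdd))$ with $\sigma\sim\sum_j\sigma_{-j}$, so that the symbol of $(i\partial_t+a(x,D))F_t$ lies in $\bigcap_m\Gamma^{-m}(\rdd)=\cS(\rdd)$; hence $(i\partial_t+a(x,D))F_t$ is regularizing, and $F_0-\mathrm{Id}$ is regularizing as well.

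Finally I would pass from the approximate propagator $F_t$ to the exact one by Duhamel's principle. Setting $R_t:=e^{itA}-F_t$ and using that $e^{itA}$ maps $\cS(\rd)$ into $\cS(\rd)$ and extends to an $L^2$-isometry \cite{helffer84}, one writes $R_t=e^{itA}(\mathrm{Id}-F_0)+i\int_0^t e^{i(t-s)A}\,(i\partial_s+a(x,D))F_s\,ds$; both contributions are compositions of the $\cS$-continuous flow with regularizing operators, so $R_t$ is regularizing with kernel in $\cC^\infty(]-T,T[,\cS(\rdd))$, which is exactly \eqref{soluzioneA}. The main obstacle lies in the second and third steps: justifying the asymptotic expansion of $a(x,D)$ acting on \eqref{FIO1} and solving the transport equations \emph{globally}, all within the Shubin classes, so that the estimates are uniform for large $(x,\eta)$ and depend continuously on $t\in\,]-T,T[$ (with the controlling constants as in Theorem \ref{caraIt}). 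This amounts to upgrading the local construction of \cite{helffer84} to the present global, homogeneous-at-infinity setting, along the lines indicated in \cite[Section 5.3]{bertinoro}.
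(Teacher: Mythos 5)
Your outline is correct and coincides with the argument the paper relies on: the paper gives no proof of its own but simply cites \cite[Proposition 3.1.1]{helffer84} (with the remark from \cite[Section 5.3]{bertinoro} that global ellipticity is not needed), and that result is established exactly by the global WKB construction you describe — eikonal equation for $\Phi$ in $\cC^\infty(]-T,T[,\Gamma^2(\rdd))$, transport equations and Borel summation for $\sigma\in\cC^\infty(]-T,T[,\Gamma^0(\rdd))$, and Duhamel's formula to convert the parametrix error into a regularizing $R_t$. Your Duhamel identity $e^{itA}-F_t=e^{itA}(\mathrm{Id}-F_0)+i\int_0^t e^{i(t-s)A}(i\partial_s+a(x,D))F_s\,ds$ is the right closing step and checks out.
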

This result says that in an interval $]-T,T[$ the propagator $e^{i t A}$ can be represented by a type I FIO $F_t$  up to an error, which however is a regularizing operator.
\begin{remark}
We observe that the function $\Phi(t,\cdot)$ of Proposition \eqref{Prop31} and the related canonical transformation $\chi_t$ in \eqref{mappachi} are tame, with Lipschitz constants of $\chi_t$ and its inverse that can be controlled by a continuous function of $t$ on the interval $]-T,T[$ and so can be chosen uniform with respect to $t$ on $]-T,T[$.\par
\end{remark}

We will show that if we replace the type I FIO $F_t$ by a more general operator in the classes $FIO(\chi_t,s)$, we are able to remove the error $R_t$ in \eqref{soluzioneA}. Precisely, we can state the following issue.
\begin{proposition}\label{cor31}
Under the assumptions of Proposition \ref{Prop31} we have \begin{equation}
\label{T1}
e^{i t A}\in\cap_{s\geq0}FIO(\chi_t,s),\quad t\in]-T,T[,\end{equation}
where  $\chi_t$ is defined in \eqref{mappachi}. Moreover for every $s\geq0$ there exists $C(t)=C_s(t)\in \cC(]-T,T[)$ such that, for every $g\in\cS(\rd)$ the Gabor matrix satisfies
\begin{equation}\label{GM1}
|\la e^{i t A} \pi(w)g, \pi(z)g\ra|\leq C(t)  \la z-\chi_t(w)\ra^{-s},\quad w,z\in\rdd.
\end{equation}
\end{proposition}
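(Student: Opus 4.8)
The plan is to exploit the decomposition $e^{itA}=F_t+R_t$ of \eqref{soluzioneA} from Proposition \ref{Prop31}, estimating the Gabor matrices of the type I FIO $F_t$ and of the regularizing remainder $R_t$ separately and then adding the two bounds. This is legitimate because, for a fixed canonical transformation, the class $FIO(\chi_t,s)$ defined by \eqref{asterisco} is a linear space: the Gabor matrix $(w,z)\mapsto\la T\pi(w)g,\pi(z)g\ra$ is linear in $T$, so if two operators satisfy the decay bound $\langle z-\chi_t(w)\rangle^{-s}$ with constants $C'$ and $C''$, their sum satisfies it with constant $C'+C''$. Since \eqref{T1} must hold for every $s$, I would fix an arbitrary $s\geq0$ throughout and recover \eqref{GM1} for that $s$.

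For the principal term, note that by \eqref{FIO1} and \eqref{rel-chi-phi} we have $F_t=I(\sigma(t,\cdot),\Phi_{\chi_t})$ with $\sigma(t,\cdot)\in\Gamma^0(\rdd)$. From Definition \ref{shubinclass}, every $a\in\Gamma^0$ obeys $|\partial^\alpha a(z)|\leq C_\alpha\langle z\rangle^{-|\alpha|}\leq C_\alpha$, so $\Gamma^0(\rdd)\subset S^0_{0,0}$, and \eqref{HC} gives $\sigma(t,\cdot)\in M^\infty_{1\otimes v_s}(\rdd)$ for every $s\geq0$. Moreover this embedding is continuous, so $\|\sigma(t,\cdot)\|_{M^\infty_{1\otimes v_s}}$ is dominated by finitely many $\Gamma^0$-seminorms of $\sigma(t,\cdot)$; since $\sigma\in\cC^\infty(]-T,T[,\Gamma^0(\rdd))$ these depend continuously on $t$, producing a bound $\|\sigma(t,\cdot)\|_{M^\infty_{1\otimes v_s}}\leq H(t)$ with $H\in\cC(]-T,T[)$, i.e.\ exactly \eqref{controllosigma}. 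As $\Phi(t,\cdot)$ is tame and $\chi_t$ fulfills \eqref{detcond2} on $]-T,T[$ with Lipschitz constants of $\chi_t$ and $\chi_t^{-1}$ continuous in $t$ (by the Remark following Proposition \ref{Prop31}), the implication (i)$\Rightarrow$(ii) of Theorem \ref{caraIt} yields $F_t\in FIO(\chi_t,s)$ with constant in $\cC(]-T,T[)$.

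For the regularizing term, whose kernel $K_t$ lies in $\cC^\infty(]-T,T[,\cS(\rdd))$, I would compute the Gabor matrix directly. Writing $\la R_t\pi(w)g,\pi(z)g\ra$ as the double integral of $K_t(x,y)\,\pi(w)g(y)\,\overline{\pi(z)g(x)}$ over $\rd\times\rd$ exhibits it as a full phase-space STFT of $K_t$ with respect to the Schwartz window $\overline{g}\otimes g$, evaluated at a point of $\bR^{4d}$ determined by $(w,z)$. Since $K_t\in\cS(\rdd)$, this STFT is a Schwartz function, hence for every $N$ one has $|\la R_t\pi(w)g,\pi(z)g\ra|\leq C_N(1+|w|+|z|)^{-N}$, with $C_N$ depending continuously on $t$ through the Schwartz seminorms of $K_t$. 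The bi-Lipschitz bound coming from \eqref{B2} gives $|\chi_t(w)|\lesssim 1+|w|$, whence $\langle z-\chi_t(w)\rangle\lesssim 1+|w|+|z|$; choosing $N=s$ then yields $R_t\in FIO(\chi_t,s)$ with continuous constant. Adding the two contributions establishes \eqref{GM1} with $C(t)=C_s(t)\in\cC(]-T,T[)$, and since $s\geq0$ was arbitrary, \eqref{T1} follows.

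I expect the main obstacle to be the bookkeeping of the continuous dependence on $t$ of all constants at once, rather than any single estimate. The $t$-continuity of $\|\sigma(t,\cdot)\|_{M^\infty_{1\otimes v_s}}$, of the Lipschitz constants of $\chi_t$ and $\chi_t^{-1}$ entering Theorem \ref{caraIt}, and of the Schwartz seminorms of $K_t$, must be assembled into a single continuous $C_s(t)$. The regularizing-term estimate is routine once its Gabor matrix is read as a $4d$-dimensional STFT of a Schwartz kernel; the genuinely structural input is the already-established Theorem \ref{caraIt}, which converts the symbol bound \eqref{controllosigma} into the Gabor-matrix decay \eqref{asterisco}.
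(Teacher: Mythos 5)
Your proposal is correct and follows essentially the same route as the paper: the decomposition $e^{itA}=F_t+R_t$ from Proposition \ref{Prop31}, the embedding $\Gamma^0\subset S^0_{0,0}\subset M^\infty_{1\otimes v_s}$ with $t$-continuous seminorm control feeding into Theorem \ref{caraIt} for $F_t$, and the observation that the Gabor matrix of $R_t$ is a Schwartz function of $(w,z)\in\bR^{4d}$ dominating $\la z-\chi_t(w)\ra^{-s}$ via the bi-Lipschitz bound. The only (harmless) divergence is that you extract the $t$-continuity of the remainder's constant directly from the Schwartz seminorms of its kernel, whereas the paper additionally routes $R_t$ through its Kohn--Nirenberg symbol and a second application of Theorem \ref{caraIt}; your shortcut is valid.
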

\begin{proof}  By  Proposition \ref{Prop31} there exists a $T>0$ such that  the evolution $e^{itA}$ can be written as \eqref{soluzioneA}, where $F_t$ is a type I FIO with symbol in
 $\sigma(t,x,\eta)\in \cC^\infty(]-T,T[,\Gamma^{0})$ and phase  $\Phi(t,\cdot)$ in \eqref{eiconal}.
Since
\begin{equation*}
\cC^\infty(]-T,T[,\Gamma^{0})\subset \cC^\infty(]-T,T[, S^0_{0,0})=\cC^\infty(]-T,T[, \cap_{s\geq 0} M^\infty_{1\otimes v_s}),
\end{equation*}
we can find $C>0$ and $k_1=k_1(s)\in\bN$ such that
\begin{equation*}
\| \sigma(t,\cdot)\|_{M^\infty_{1\otimes v_s}}\leq C  \sum_{|\a|\leq k_1}\|\partial^\a\sigma(t,\cdot)v_{|\a|}\|_\infty
\end{equation*}
where $\sum_{|\a|\leq k_1}\|\partial^\a\sigma(t,\cdot)v_{-|a|}\|_\infty\in\cC(]-T,T[)$ by assumption. Hence
the characterization of Theorem \ref{caraIt} gives  $F_t\in \cap_{s\geq 0} FIO(\chi_t,s)$, $t\in ]-T,T[$, where the canonical transformation $\chi_t$ is defined in \eqref{mappachi} and related with $\Phi(t,\cdot)$ by \eqref{rel-chi-phi}
and
\begin{equation*}
|\la F_t \pi(w)g, \pi(z)g\ra|\leq C(t) \la z-\chi_t(w)\ra^{-s}
\end{equation*}
with $C(t)\in\cC (]-T,T[)$.

Fix now $g\in\cS(\rd)$ with $\|g\|_2=1$ so that the inversion formula \eqref{treduetre} becomes ${\rm Id}=V_g^\ast V_g$ and we can write
$R_t= V_g^\ast V_g R_t V_g^\ast V_g$. Since $R_t$ is a regularizing operator, for
$T_t:=V_g R_t V_g^\ast$,
the following diagram is
commutative:
\[
\begin{diagram}
\node{\cS'(\rdd)}\arrow{s,l}{V_g^\ast} \arrow{c,t}{T_t}  \node{ \cS(\rdd)}
  \\
\node{\cS'(\rd)}  \arrow{c,t}{R_t} \node{ \cS(\rd)} \arrow{n,r}{V_g}
\end{diagram}
\]
(see the definition an properties of $V_g$ and its adjoint $V_g^\ast$ in Subsection \ref{2.2}).
This means that the linear operator $T_t: \cS'(\rdd)\to\cS(\rdd) $ is regularizing as well and so its kernel $k_t(w,z)=\la R_t \pi(w)g, \pi(z)g\ra \in \cS(\bR^{4d})$ satisfies
\begin{equation}\label{kernel}|k_t(w,z)|=|\la R_t \pi(w)g, \pi(z)g\ra|\leq \la z\ra^{-N} \la w\ra^{-N},\quad \forall z,w\in\rdd, \,\,\forall N\in \bN.
\end{equation}
The previous estimates yields $R_t\in FIO (\chi,s)$, for every  bi-Lipschitz mapping $\chi$ and every $s\geq0$. Indeed, $$\la z-\chi(w)\ra\leq \la z\ra \la \chi(w)\ra\asymp \la z\ra \la w\ra$$
and choosing $N\geq s$ in \eqref{kernel} we obtain $$|k_t(w,z)|\lesssim \la z\ra^{-N} \la w\ra^{-N}\lesssim \la z-\chi(w)\ra^{-s}.$$
Finally, if $\sigma(R_t)(z)$ is the Kohn-Nirenberg symbol  of $R_t$, using the fact that $\cS(\rdd)\subset S_{0,0}^0$ with continuous embedding for every $s\geq0$ we   find $C>0$ and $k_2\in\bN$ such that
 $$\|\sigma(R_t)\|_{M^\infty_{1\otimes v_s}}\leq C \sum_{|\a+\b|\leq k_2}\|z^\a \partial_z^\b \sigma(R_t)(z)\|_\infty\in\cC (]-T,T[).$$
Using Theorem \ref{caraIt} with $\chi_t$ in \eqref{mappachi} which is tame for $t\in ]-T,T[$,   we find $C(t)\in\cC(]-T,T[)$ such that
 $$|\la R_t\pi(w)g,\pi(z)g \ra| \leq C(t)\la z-\chi_t(w)\ra^{-s}.$$
Finally the thesis follows since  $FIO(\chi,s)$ are linear spaces:
\begin{align*}
|\la e^{i t A}\pi(w)g, \pi(z)g\ra|&\leq |\la F_t \pi(w)g, \pi(z)g\ra|+|\la R_t\pi(w)g, \pi(z)g\ra|\\
&\leq C(t) \la z-\chi_t(w)\ra^{-s}.
\end{align*}
which gives \eqref{GM1}.
\end{proof}

The previous proposition gives an approximation of $e^{it A}$ for $|t|<T$. Using the group property of the propagator $e^{it A}$ Helffer in \cite[page 139]{helffer84} describes how to obtain an approximation of $e^{it A}$ for every $t\in\bR$.
Indeed, a classical trick, jointly with the group property of   $e^{it A}$, applies. We consider $T_0<T/2$ and define
$$I_h=]h T_0, (h+2)T_0[, \quad h\in \bZ.
$$
For $t\in I_h$, by the group property of  $e^{it A}$:
\begin{equation}\label{eita}
e^{it A}=e^{i(t-hT_0) A}(e^{i(hT_0) A/|h|})^{|h|}
\end{equation}
and using Proposition \ref{Prop31}, one can write
\begin{equation}\label{eita2}e^{it A}-F_{t-hT_0}(F_{\frac{h}{|h|}T_0})^{|h|}\in\cC^\infty(I_h,\cL (\cS',\cS)).
\end{equation}
In general, $e^{it A}$ or even the composition  $F_{t-hT_0}(F_{\frac{h}{|h|}T_0})^{|h|}$ cannot be represented as a type I FIO in the form \eqref{FIO1}.  We shall prove below that
the evolution $e^{it A}$ is in the class  $\cap_{s\geq 0}FIO(\chi_t,s)$ for every $t\in\bR$, with $\chi$  defined in \eqref{mappachi}, so that this class is proven to be the right framework for describing the evolution  $e^{it A}$.

\begin{theorem}\label{UNCP}
Given the  Cauchy problem \eqref{cpA} with $A=a(x,D)$ as above, consider the mapping   $\chi_t$ defined in \eqref{mappachi}.
 Then  \begin{equation}\label{A1}e^{it A}\in \cap_{s\geq0} FIO(\chi_t,s)),\quad t\in\bR\end{equation}
 and for every $s>2d$ there exists $C(t)\in \cC(\bR)$ such that
 \begin{equation}\label{GM2}
 |\la e^{i t A} \pi(w)g, \pi(z)g\ra|\leq C(t)  \la z-\chi_t(w)\ra^{-s},\quad w,z\in\rdd,\quad t\in\bR.
 \end{equation}
\end{theorem}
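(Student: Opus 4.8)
The plan is to bootstrap from the local result of Proposition~\ref{cor31}, which already establishes $e^{itA}\in\cap_{s\geq0}FIO(\chi_t,s)$ for $t\in\,]-T,T[$ with a continuous controlling constant $C(t)$, to all of $\bR$ by exploiting the group property $e^{itA}=(e^{i\tau A})^N$ and the algebra structure of the $FIO$ classes. The key algebraic input is property (ii) of the classes and its quantitative refinement in \eqref{GMn}: a composition of $n$ operators $T^{(i)}\in FIO(\chi_i,s)$ lands in $FIO(\chi_1\circ\cdots\circ\chi_n,s)$, with explicit control of the Gabor matrix involving a constant $C_0$ that depends only on $s$ and the Lipschitz constants of the partial compositions. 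Since the $\chi_t$ obey the group law \eqref{prodotto}, composing $N$ copies of $\chi_\tau$ gives exactly $\chi_{N\tau}$, so the geometry matches up correctly.

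\textbf{First} I would fix $t\in\bR$ and choose $\tau=t/N$ with $N$ large enough that $|\tau|<T$; then $e^{itA}=(e^{i\tau A})^N$ by the group property \eqref{prodotto} applied to the propagator. By Proposition~\ref{cor31} each factor $e^{i\tau A}$ lies in $\cap_{s\geq0}FIO(\chi_\tau,s)$ with controlling constant $C_s(\tau)\in\cC(]-T,T[)$. \textbf{Next}, for $s>2d$ I would apply the refined composition estimate: using \eqref{GMn} with all $\chi_i=\chi_\tau$ and all $C_i=C_s(\tau)$, the Gabor matrix of $(e^{i\tau A})^N=e^{itA}$ satisfies
\begin{equation*}
|\la e^{itA}\pi(w)g,\pi(z)g\ra|\leq C_0\,C_s(\tau)^N\,\la z-\chi_\tau^{\,\circ N}(w)\ra^{-s}
=C_0\,C_s(\tau)^N\,\la z-\chi_t(w)\ra^{-s},
\end{equation*}
where the last equality is precisely the group law $\chi_\tau\circ\cdots\circ\chi_\tau=\chi_{N\tau}=\chi_t$. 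This shows $e^{itA}\in FIO(\chi_t,s)$ for every $s>2d$, and hence for every $s\geq0$ after noting that larger $s$ in the Gabor-matrix decay is a stronger statement handled the same way (or by interpolation/monotonicity of the weights); this gives \eqref{A1}. To obtain \eqref{GM2} with $C(t)\in\cC(\bR)$, I would follow the interval decomposition $I_h=]hT_0,(h+2)T_0[$ with $T_0<T/2$ from \eqref{eita}, writing $e^{itA}=e^{i(t-hT_0)A}(e^{i(h/|h|)T_0 A})^{|h|}$, so that on each $I_h$ the number of factors is locally constant and $t\mapsto C_0\,C_s(t-hT_0)\,C_s((h/|h|)T_0)^{|h|}$ is continuous; patching these overlapping intervals yields a globally continuous $C(t)$.

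\textbf{The main obstacle} is controlling the constant $C_0$ in \eqref{GMn} uniformly as the number of factors grows with $t$: $C_0$ depends on the Lipschitz constants of all the partial compositions $\chi_\tau,\chi_\tau^{\,\circ2},\dots,\chi_\tau^{\,\circ(N-1)}$ and their inverses. A naive bound would let these Lipschitz constants grow geometrically in $N$ (hence exponentially in $t$), but this is harmless for the \emph{qualitative} membership \eqref{A1} and for continuity on each fixed bounded interval $I_h$, since there $|h|$ and thus the number of factors is bounded. The point is that on each $I_h$ only a \emph{fixed finite} number of compositions occurs, so $C_0$ is a fixed finite constant there and the estimate \eqref{eita2} guarantees the regularizing error contributes nothing to the Gabor-matrix decay (by the same argument as in the proof of Proposition~\ref{cor31}, where $R_t\in FIO(\chi,s)$ for every bi-Lipschitz $\chi$ and every $s$). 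Finally one checks that the remainder in \eqref{eita2}, being in $\cC^\infty(I_h,\cL(\cS',\cS))$, has a Schwartz Gabor kernel dominated by $\la z-\chi_t(w)\ra^{-s}$ exactly as in \eqref{kernel}, so the full propagator inherits the bound \eqref{GM2} with $C(t)$ continuous on all of $\bR$.
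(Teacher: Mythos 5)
Your proposal is correct and follows essentially the same route as the paper: decompose time via the overlapping intervals $I_h$, write $e^{itA}$ as a product of finitely many short-time propagators using the exact group property, apply Proposition~\ref{cor31} to each factor and the quantitative composition bound \eqref{GMn} together with the group law \eqref{prodotto} for $\chi_t$, and patch the resulting continuous constants $C_h$ into a global $C(t)\in\cC(\bR)$. The only superfluous step is your final appeal to the remainder in \eqref{eita2}: since each factor $e^{i\tau A}$ is already placed in $FIO(\chi_\tau,s)$ by Proposition~\ref{cor31} (which absorbed $R_\tau$ there), the identity $e^{itA}=e^{i(t-hT_0)A}(e^{i(hT_0)A/|h|})^{|h|}$ is exact and no further regularizing error needs to be handled.
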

\begin{proof} We fix $T_0<T/2$ as above. For $t\in\bR$, there exists a $h\in\bZ$ such that $t\in I_h$. Using Proposition \ref{cor31} for $t_1=t-hT_0\in ]-T,T[$ we have that $e^{it_1A}\in FIO(\chi_{t_1},s)$ and for $t_2=\frac{h}{|h|}T_0 \in ]-T,T[$,
$e^{it_2A}\in FIO(\chi_{t_2},s)$, for every $s\geq 0$, and there exists a continuous function $C(t)$ on $]-T,T[$ such that \eqref{GM1} is satisfied for $t=t_1$ and $t=t_2$. Using the algebra property \eqref{algebra}, for every $s>2d$,
$$e^{it_1A}(e^{it_2A})^{|h|}\in FIO(\chi_{t_1}\circ(\chi_{t_2})^{|h|},s)$$
and  the group law \eqref{prodotto} for $\chi_t(y,\eta)=S_0(t)(y,\eta)$ gives
$$
\chi_{t_1}\circ(\chi_{t_2})^{|h|}=\chi_{t_1+|h| t_2}=\chi_{t},
$$
as expected and using \eqref{GMn} we obtain that the Gabor matrix of the product $e^{it_1A}(e^{it_2A})^{|h|}$ is controlled by a continuous function $C_h(t) $ on $I_h$. Finally, from  the estimate
$$ |\la e^{it A} \pi(w)g, \pi(z)g\ra|\leq C_h(t) \la z-\chi_t(w)\ra^{-s},\quad t\in I_h,$$
with $C_h\in\cC(I_h)$, it is easy to construct a new continuous controlling function $C(t)$ on $\bR$ such that \eqref{GM2} is satisfied.
\end{proof}
\section{Schr\"odinger Equations with bounded perturbations}
We now study the Cauchy problem
  for linear Schr\"odinger
  equations of the type
  \begin{equation}\label{C1}
\begin{cases} i \displaystyle\frac{\partial
u}{\partial t} +H u=0\\
u(0,x)=u_0(x),
\end{cases}
\end{equation}
with $t\in\bR$ and the initial condition $u_0\in\cS(\rd)$.  We consider  a Hamiltonian of the form
\begin{equation}\label{C1bis}
H=a(x,D)+ \sigma(x,D),
\end{equation}
 where $A=a(x,D)$ is the pseudodifferential operator satisfying \eqref{cpA}, whose corresponding propagator $e^{it A}\in \cap_{s\geq 0} FIO(\chi_t,s)$, for $t\in\bR$, as shown in the preceding section.

 The perturbation   $B=\sigma(x,D)$ is a pseudodifferential operator  with a
symbol $\sigma\in M^{\infty}_{1\otimes v_s}(\rdd)$, $s>2d$. This last requirement implies the boundedness of $B$ on $M_{\mu}(\rd)$ for a weight $\mu$ as in the assumptions of Theorem \ref{contmp} (with $\chi={\rm Id}$), (see also \cite{charly06} using $M^{\infty}_{1\otimes v_s}(\rdd)\subset M^{\infty,1}(\rdd)$, $s>2d$) and in particular on $L^2(\rd)$.
Hence,  $H = A+ B$ is a
bounded perturbation of the generator $A $  of a unitary
group by~\cite{RS75}, and $H$ is the generator of a well-defined
(semi-)group.  We shall heavily use the theory of operator
semigroups, addressing to the
textbooks~\cite{RS75} and \cite{EN06} for an introduction on the topic. Our result, containing Theorem \ref{T1.1},  is as follows.
\begin{theorem}\label{PCP}
Let $s>2d$. Consider the  Cauchy problem \eqref{C1} with $A=a(x,D)$ and $B=\sigma(x,D)$ as above. Let  $\chi_t$ be the mapping defined in \eqref{mappachi}.
 Then the solution can be written as
 $$e^{it H}=e^{it A}Q(t)=\tilde{Q}(t)e^{it A}\in FIO(\chi_t,s), \quad t\in \bR,$$
where $Q(t)$ and $\tilde{Q}(t)$ are pseudodifferential operators  with symbols in $M^{\infty}_{1\otimes v_s}(\rdd)$ and the continuous Gabor matrix satisfies
$$|\la e^{it H} \pi(w)g,\pi(z)g\ra |\leq C(t) \la z-\chi_t(w)\ra^{-s},\quad w,z\in\rdd,
$$
for a suitable positive continuous function $C(t)$ on $\bR$.
\end{theorem}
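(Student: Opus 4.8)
The plan is to pass to the interaction picture and to solve a linear operator ODE inside the Banach algebra of pseudodifferential operators with symbol in $M^{\infty}_{1\otimes v_s}(\rdd)$. Since $B=\sigma(x,D)$ with $\sigma\in M^{\infty}_{1\otimes v_s}$, $s>2d$, is bounded on $L^2$, the bounded-perturbation theory of \cite{RS75,EN06} guarantees that $H=A+B$ generates a strongly continuous group $e^{itH}$ on $L^2(\rd)$. I would set $Q(t):=e^{-itA}e^{itH}$ and $\tilde Q(t):=e^{itH}e^{-itA}$, so that $Q(0)=\tilde Q(0)=\Id$ and the two factorizations $e^{itH}=e^{itA}Q(t)=\tilde Q(t)e^{itA}$ hold by construction. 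Differentiating in $\mathcal{L}(L^2)$ (equivalently, via the Duhamel/variation-of-parameters identity) yields the Volterra equation
\begin{equation}
Q(t)=\Id+i\int_0^t B(\tau)Q(\tau)\,d\tau,\qquad B(\tau):=e^{-i\tau A}Be^{i\tau A},
\end{equation}
and the symmetric one $\tilde Q(t)=\Id+i\int_0^t \tilde Q(\tau)\tilde B(\tau)\,d\tau$ with $\tilde B(\tau):=e^{i\tau A}Be^{-i\tau A}$.

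The first key observation is that $B(\tau)$ is again a pseudodifferential operator of the same class. Indeed, by Theorem \ref{UNCP} and the group law $\chi_{-\tau}=\chi_\tau^{-1}$ one has $e^{i\tau A}\in FIO(\chi_\tau,s)$ and $e^{-i\tau A}\in FIO(\chi_\tau^{-1},s)$, while $B\in FIO(\Id,s)$ by Proposition \ref{charpsdo}. The algebra property \eqref{algebra} then gives $B(\tau)\in FIO(\chi_\tau^{-1}\circ\Id\circ\chi_\tau,s)=FIO(\Id,s)$, i.e.\ $B(\tau)$ has symbol $\sigma_\tau\in M^{\infty}_{1\otimes v_s}$, and likewise for $\tilde B(\tau)$. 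Combining the quantitative composition estimate \eqref{controllo1}, the equivalence \eqref{cost}, and the continuity in $\tau$ of the constants of $e^{\pm i\tau A}$ from Theorem \ref{UNCP}, one produces a continuous function $\kappa(\tau)$ with $\|\sigma_\tau\|_{M^{\infty}_{1\otimes v_s}}\le\kappa(\tau)$, and similarly for $\tilde B(\tau)$.

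Write $\mathcal{A}_s:=FIO(\Id,s)$ for the space of pseudodifferential operators with symbol in $M^{\infty}_{1\otimes v_s}$; by \eqref{algebra} with $\chi_1=\chi_2=\Id$ together with \eqref{cost} it is a Banach algebra, submultiplicative after rescaling its norm by the composition constant $C_0$. I would then solve the Volterra equation inside $\mathcal{A}_s$ by the Dyson/Picard iteration
\begin{equation}
Q(t)=\sum_{n\ge0}i^n\int_0^t\!\!\int_0^{t_1}\!\!\cdots\!\int_0^{t_{n-1}}B(t_1)\cdots B(t_n)\,dt_n\cdots dt_1.
\end{equation}
Using the Banach-algebra bound and local integrability of $\kappa$, this series converges absolutely in $\mathcal{A}_s$, uniformly for $t$ in compact sets, with $\|Q(t)\|_{\mathcal{A}_s}\le\exp\big(C_0\int_0^{|t|}\kappa(\tau)\,d\tau\big)$, a continuous function of $t$. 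Since $\mathcal{A}_s\hookrightarrow\mathcal{L}(L^2)$ continuously (boundedness, property (i), for $s>2d$) and the same equation has a unique solution in $\mathcal{L}(L^2)$, the object just built coincides with $e^{-itA}e^{itH}$; the iteration for $\tilde B$ gives $\tilde Q(t)\in\mathcal{A}_s$ in the same way. Finally, by the algebra property once more,
\[
e^{itH}=e^{itA}Q(t)\in FIO(\chi_t,s)\cdot FIO(\Id,s)\subset FIO(\chi_t,s),
\]
and the Gabor-matrix estimate follows from \eqref{controllo1} with constant $C(t)\lesssim C_s(t)\,\|Q(t)\|_{\mathcal{A}_s}$, continuous on $\bR$.

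The step I expect to be the main obstacle is securing enough regularity of $\tau\mapsto B(\tau)$ in the $\mathcal{A}_s$-norm (continuity, or at least local Bochner integrability) to legitimize the iteration: one must control the symbol $\sigma_\tau$ of the conjugated operator $e^{-i\tau A}Be^{i\tau A}$ in $M^{\infty}_{1\otimes v_s}$ uniformly and continuously in $\tau$, which amounts to tracking the $\tau$-dependence of all constants through the composition estimate \eqref{controllo1} and through the continuous bounds for $e^{\pm i\tau A}$ of Theorem \ref{UNCP}. Once this quantitative, $\tau$-uniform control is in hand, the convergence of the Dyson series and the identification of the $\mathcal{A}_s$-solution with $e^{-itA}e^{itH}$ are routine.
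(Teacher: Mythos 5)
Your proposal is correct and follows essentially the same route as the paper: interaction picture, Volterra equation, Dyson--Phillips expansion, the algebra property \eqref{algebra} to see that $B(\tau)=e^{-i\tau A}Be^{i\tau A}\in FIO(\mathrm{Id},s)$, and the quantitative composition estimate \eqref{controllo1} with the continuous-in-$t$ constants of Theorem \ref{UNCP} to sum the series. The only cosmetic difference is that you phrase the convergence in a Banach-algebra norm on $FIO(\mathrm{Id},s)$, whereas the paper sums the Gabor-matrix estimates of the terms $Q_n(t)$ directly; by \eqref{cost} these are the same computation.
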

\begin{proof}
The pattern is similar to \cite[Theorem 4.1]{CGNR13}. We show the result on the interval  $[0,+\infty[$, for the interval $]-\infty,0]$ the result is obtained by the previous case by replacing $t$ with $-t$.\par The operator $A$ is the generator of a strongly continuous one-parameter group  on $L^2(\rd)$ and $T(t) = e^{it A} $ is the corresponding (semi)group that solves the evolution equation $i \frac{dT(t)}{dt} = A T(t)$. Then $e^{itA}$ is a strongly continuous one-parameter group on
$L^2(\rd)$. As already observed, by the assumptions on the symbol of $B$, it follows that $B$
is a bounded operator on $L^2(\rd)$, hence $H=A +B$ is the
generator of a strongly continuous one-parameter group $S(t)$~\cite{EN06}.
 The perturbed semigroup $S(t) = e^{itH}$ satisfies an  abstract Volterra
equation
\begin{equation}
  \label{eq:kh6}
  S(t)f= T(t)f + \int _0^t T(t-s) B S(s) f \, ds =
  T(t)\Big(\mathrm{Id} + \int _0^t T(-s) B T(s) T(-s) S(s)  \, ds
  \Big)f
\end{equation}
for every $f\in L^2(\rd) $ and $t\geq 0$. If we define
by $Q(t) = T(-t)S(t)$, then by \eqref{eq:kh6} $Q(t)$  satisfies the Volterra equation
\begin{equation}
  \label{eq:kh7}
  Q(t) =  \mathrm{Id} + \int _0^t T(-s) B T(s) Q(s)   \, ds  \, ,
\end{equation}
where the integral is to be understood in the strong sense.
Now write $B(s) = T(-s) B T(s)$, then the solution of~\eqref{eq:kh7}
can be written as a so-called \emph{Dyson-Phillips expansion}
(\cite[X.69]{RS75} or \cite[Ch.~3, Thm.~1.10]{EN06})
\begin{equation}
  \label{eq:kh8}
  Q(t) = \mathrm{Id} + \sum _{n=1}^\infty (-i)^n \int _0^t \int
  _0^{t_1} \dots \int
  _0^{t_{n-1}} B(t_1) B(t_2) \dots B(t_n) \, dt_1 \dots dt_n := \sum
  _{n=0}^\infty Q_n(t) \, .
\end{equation}

We shall show that $Q(t)$ is a pseudodifferential operator
with symbol in $M^{\infty}_{1\otimes v_s}(\rdd)$.\par

For $\tau\in [0,t]$,  the algebra property \eqref{algebra} gives $$B(\tau) = e^{i(-\tau) A} B e^{i\tau A} \in FIO(\chi_{-\tau}\circ {\rm Id} \circ \chi_\tau,s )=FIO({\rm Id},s)$$
 since $\chi_{-\tau}\circ {\rm Id} \circ \chi_\tau=\chi_{-\tau} \circ \chi_\tau=S_0(0)={\rm Id}$ by \eqref{prodotto}.
 Moreover, $e^{i\pm \tau A}$ satisfies \eqref{GM2}, so that  using
\eqref{GMn} with $n=3$, $T^{(1)}= e^{i(-\tau) A}$, $T^{(2)}=B$, $T^{(3)}= e^{i\tau A}$ and $\chi={\rm Id}$ we can write
\begin{equation}\label{funzcost}|\la B(\tau)\pi(w)g,\pi(z)g\ra|\leq C(\tau)\la z-w\ra^{-s},
\end{equation}
for a new continuous function $C(\tau)$ on  $\bR$.
Using  \eqref{GMn} again for the composition of \psdo s
$\prod_{j=1}^{n} B(t_j)$ we obtain
\begin{align*}
|\langle \prod _{j=1}^n B(t_j) \pi (w) g, \pi (z) g\rangle | 
&\leq C_0 C(t_1)   \cdots  C(t_n) \la z-w\ra^{-s},
\end{align*}
with $C(t)\in\cC(\bR)$ in \eqref{funzcost}. \par
We now show that $Q_n(t) $ is a \psdo \, with symbol in $M^\infty_{1\otimes v_s}(\rdd)$.
We  control the Gabor matrix of $Q_n(t) $ as follows:
$$
|\langle Q_n(t) \pi (w)g, \pi (z)g\rangle|\leq C_0 \int _0^t \int
  _0^{t_1} \dots \int
  _0^{t_{n-1}}    C(t_1)   \cdots C(t_n)  dt_1 \ldots dt_n \la z-w\ra^{-s}.
$$

If we define $$H(t)=\max_{\tau\in [0,t]} C(\tau)\in\cC(\bR),$$
we obtain
$$
|\langle Q_n(t) \pi (w)g, \pi (z)g\rangle|\leq C_0 H(t)^n
  \frac{t^n }{n!} \la z-w\ra^{-s} .$$
Finally, setting $\tilde{H}(t)=t H(t)\in\cC(\bR)$,
\begin{align*}
|\langle Q(t) \pi (w) g , \pi (z)g\rangle | & \leq   \sum
_{n=0}^\infty |\langle Q_n(t) \pi (w) g , \pi (z)g\rangle |  \la z-w\ra^{-s}\\
 & \leq   C_0\sum _{n=0}^\infty
   \frac{\tilde{H}(t)^n }{n!} \la z-w\ra^{-s}\\
   &=C(t)  \la z-w\ra^{-s},
   \end{align*}
for a new function $C(t)\in\cC(\bR)$.
This gives by Theorem \ref{caraIt} that $Q(t)\in FIO({\rm Id},s)$ that is $Q$ is a \psdo \,with symbol in $M^\infty_{1\otimes v_s}(\rdd)$. Finally, the algebra property again gives
$$
e^{itH} = T(t) C(t) \in FIO(\chi_t,s),
$$
and the estimate \eqref{controllo1} gives that the Gabor matrix of $e^{itH}$ is controlled by a continuous function $C(t)$ on $\bR$.
\end{proof}

Consequently, the Schr\"odinger equation preserves the phase-space
concentration, as expressed by the following issue.

\begin{corollary}\label{corfinal}
 Let
 $0\leq r<s-2d$, and
 $\mu\in\cM_{v_r}$. If the initial condition $u_0\in M^p_{\mu\circ\chi}$, $1\leq p\leq \infty$, then
 $u(t,\cdot )
 = e^{itH}u_0\in  M^p_{\mu}$, for all $t\in\bR$.
 \end{corollary}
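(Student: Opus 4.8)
The plan is to read the statement off as an immediate combination of the two principal results already at our disposal: the membership $e^{itH}\in FIO(\chi_t,s)$ established in Theorem \ref{PCP}, and the weighted mapping property of operators in $FIO(\chi,s)$ recorded in Theorem \ref{contmp}. No new analysis is required; the entire work lies in matching hypotheses and notation, reading the unadorned $\chi$ in the statement as $\chi_t$ for each fixed $t$.

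First I would fix $t\in\bR$ and invoke Theorem \ref{PCP}: since $s>2d$, the propagator satisfies $e^{itH}\in FIO(\chi_t,s)$, where $\chi_t$ is the bi-Lipschitz canonical transformation of \eqref{mappachi}, which obeys the growth condition \eqref{B2}. This places $e^{itH}$ squarely among the operators to which Theorem \ref{contmp} applies. Second, I would check that the hypotheses of Theorem \ref{contmp} hold with $\chi=\chi_t$: by assumption $0\leq r<s-2d$ and $\mu\in\cM_{v_r}$. Before applying the theorem I would note, exactly as observed in the remark following Theorem \ref{contmp}, that $\mu\circ\chi_t\in\cM_{v_r}$---so that $M^p_{\mu\circ\chi_t}$ is a genuine modulation space---because $v_r\circ\chi_t\asymp v_r$ by the bi-Lipschitz property of $\chi_t$. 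Theorem \ref{contmp} then yields that $e^{itH}$ extends to a continuous operator $M^p_{\mu\circ\chi_t}\to M^p_\mu$ for every $1\leq p\leq\infty$.

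The conclusion is then immediate: if $u_0\in M^p_{\mu\circ\chi_t}$, which is the meaning of the hypothesis $u_0\in M^p_{\mu\circ\chi}$ for the value of $t$ under consideration, then $u(t,\cdot)=e^{itH}u_0\in M^p_\mu$, and since $t\in\bR$ was arbitrary this holds for all $t$.

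The only point demanding a moment's care---and thus the closest thing to an obstacle---is the silent identification of the generic $\chi$ in the statement with the time-dependent $\chi_t$, together with the verification that $\mu\circ\chi_t$ remains $v_r$-moderate so that the source space is well-defined; both are settled by the remark following Theorem \ref{contmp}. I would also observe that the strictness of the bound $r<s-2d$ (rather than the weaker condition governing the propagation result) is precisely what Theorem \ref{contmp} requires, so the hypotheses are exactly calibrated and nothing further need be checked.
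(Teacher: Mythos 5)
Your proof is correct and follows exactly the paper's own route: the paper likewise deduces the corollary immediately by combining Theorem \ref{PCP} (which gives $e^{itH}\in FIO(\chi_t,s)$) with the weighted boundedness of Theorem \ref{contmp}. Your additional remarks on identifying $\chi$ with $\chi_t$ and on $\mu\circ\chi_t\in\cM_{v_r}$ simply make explicit what the paper leaves to the reader.
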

 \begin{proof}
 It follows immediately  from Theorems \ref{PCP} and \eqref{contmp}.
 \end{proof}

 Using $v_r\circ \chi\asymp v_r$, we observe that the Schr\"odinger evolution
 preserves the phase space concentration $M^p_{v_r}$ of the initial condition $u_0$. In other words, the time evolution leaves $M^p_{v_r}$ invariant.
 \begin{corollary}\label{corfinal2}
  Let  $ |r|<s-2d$. If the initial condition $u_0\in M^p_{v_r}$, $1\leq p\leq\infty$, then
  $u(t,\cdot )
  = e^{itH}u_0\in  M^p_{v_r}$, for all $t\in\bR$.
  \end{corollary}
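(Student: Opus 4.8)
The plan is to obtain Corollary \ref{corfinal2} as a direct specialization of the already-established Corollary \ref{corfinal}, taking the moderate weight to be $\mu=v_r$ itself and exploiting the bi-Lipschitz invariance of the weight under the canonical transformation. The one point that needs attention is that the hypothesis here allows $r<0$, whereas Corollary \ref{corfinal} is phrased with a nonnegative moderateness exponent; I would dispose of this first so that the negative and nonnegative ranges are treated uniformly.

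First I would record that for \emph{every} $r\in\bR$ one has $v_r\in\cM_{v_{|r|}}$. For $r\ge 0$ this is just submultiplicativity of $v_r$; for $r<0$ it follows from Peetre's inequality $\la z+\zeta\ra^{-|r|}\le C\la z\ra^{|r|}\la\zeta\ra^{-|r|}$, which gives $v_r(z+\zeta)\le C\,v_{|r|}(z)\,v_r(\zeta)$. Since the assumption $|r|<s-2d$ is precisely $0\le|r|<s-2d$, the pair consisting of the weight $\mu=v_r$ and the moderateness exponent $|r|$ meets the requirements of Corollary \ref{corfinal} for every $1\le p\le\infty$.

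I would then apply Corollary \ref{corfinal} with $\mu=v_r$ and moderateness exponent $|r|$. Recalling the remark following Theorem \ref{contmp}, the bi-Lipschitz property of $\chi_t$ (together with $\chi_t^{-1}=\chi_{-t}$) yields $v_r\circ\chi_t\asymp v_r$, so the two weights define the \emph{same} modulation space with equivalent norms, namely $M^p_{v_r\circ\chi_t}=M^p_{v_r}$. Hence $u_0\in M^p_{v_r}=M^p_{v_r\circ\chi_t}=M^p_{\mu\circ\chi_t}$, and Corollary \ref{corfinal} delivers $e^{itH}u_0\in M^p_{\mu}=M^p_{v_r}$ for all $t\in\bR$. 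This is the asserted invariance. There is no real obstacle in this argument: the entire content reduces to the weight equivalence $v_r\circ\chi_t\asymp v_r$, which is immediate once $\chi_t$ and its inverse are known to be globally Lipschitz, and to the treatment of negative $r$ through $v_{|r|}$-moderateness as above. Once these two observations are in place the statement is a one-line consequence of Corollary \ref{corfinal}.
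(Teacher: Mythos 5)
Your argument is correct and is essentially the paper's own proof: the paper likewise reduces Corollary \ref{corfinal2} to Corollary \ref{corfinal} by observing, via Peetre's inequality, that $v_q$ is $v_r$-moderate precisely when $|q|\leq r$ (your statement that $v_r\in\cM_{v_{|r|}}$ with $|r|<s-2d$), combined with the equivalence $v_r\circ\chi_t\asymp v_r$ already noted after Theorem \ref{contmp}. No gaps; your handling of the negative-$r$ case is exactly the point the paper's Peetre argument is making.
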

 \begin{proof}
  The result is a special case of Corollary \ref{corfinal}
  once we  prove, for $r>0$,  $v_{q}$ is $v_r$-moderate if and only if $|q|\leq r$. But this is an easy consequence of Peetre's inequality
  $$ \la z+\zeta\ra^q\leq \la z\ra^{|q|}\la \zeta\ra^q.
  $$
  \end{proof}
  \par
  From Corollaries \ref{corfinal} and \ref{corfinal2} we recapture \eqref{T1.5eq1} in Theorem \ref{T1.5}.
\section{Propagation of singularities}
In what follows we shall use $\chi_t$ for $\chi_t$ when it is more convenient.
\begin{proposition}\label{5.2}
Let $f\in\cS'(\rd)$,  $r>0$. Then:\\
(i) The definitions of  $\wpr (f)$  and $WF_G (f)$  do not depend on the choice of the window $g$.\\
(ii) $f\in M^p_{v_r}(\rd)$ if and only if $\wpr (f)=\emptyset$. Similarly, $f\in\cS(\rd)$ if and only if $WF_G (f)=\emptyset$.
\end{proposition}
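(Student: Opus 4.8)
The plan is to derive both statements from the change-of-window inequality (Lemma \ref{changewind}) together with elementary covering and integrability arguments. For part (i) I would observe that each defining condition is symmetric under interchanging the two windows, so it suffices to prove one implication: if the relevant estimate holds on a conic neighborhood of $z_0$ with window $g_1$, then it holds on a slightly smaller cone with any other window $g_0$. For part (ii), membership in the space is exactly the statement that the corresponding estimate holds globally over $\rdd$, which is equivalent to its holding on each of finitely many cones covering the sphere together with a bounded region.

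For part (i) fix $z_0\neq0$ and take $\gamma=g_1$ in Lemma \ref{changewind}, so that $\la g_1,g_1\ra=\|g_1\|_2^2\neq0$ and
$$|V_{g_0}f(z)|\leq \|g_1\|_2^{-2}\,(|V_{g_1}f|\ast|V_{g_0}g_1|)(z),\qquad V_{g_0}g_1\in\cS(\rdd).$$
Assume $z_0\notin\wpr(f)$ computed with $g_1$, i.e.\ $\int_{\Gamma}|V_{g_1}f(z)|^p\la z\ra^{pr}\,dz<\infty$ for some open conic neighborhood $\Gamma$ of $z_0$, and pick an open cone $\Gamma'$ with $\overline{\Gamma'}\setminus\{0\}\subset\Gamma$. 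The key geometric fact is that there exists $c>0$ with $|z-w|\geq c(|z|+|w|)$ whenever $z\in\Gamma'$ and $w\in\rdd\setminus\Gamma$, a standard consequence of the positive angular separation of the two cones on the unit sphere. I would then split the convolution at $w\in\Gamma$ and $w\notin\Gamma$. On the first piece Peetre's inequality $\la z\ra^r\lesssim\la z-w\ra^r\la w\ra^r$ turns the expression into the convolution of $|V_{g_1}f|\la\cdot\ra^r\,\mathbf 1_\Gamma\in L^p$ with $\la\cdot\ra^r|V_{g_0}g_1|\in L^1$, which lies in $L^p$ by Young's inequality. On the second piece, $w\notin\Gamma$, the separation estimate gives $\la z\ra\lesssim\la z-w\ra$ and $\la w\ra\lesssim\la z-w\ra$, so after inserting the factor $\la w\ra^{-r}$ (legitimate because $f\in M^p_{-r}$ forces $V_{g_1}f\,\la\cdot\ra^{-r}\in L^p$) the kernel $\la z\ra^r\la w\ra^r|V_{g_0}g_1(z-w)|$ is dominated by the $L^1$ function $\la z-w\ra^{2r}|V_{g_0}g_1(z-w)|$, and Young's inequality again yields an $L^p$ bound. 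Hence $\int_{\Gamma'}|V_{g_0}f|^p\la z\ra^{pr}\,dz<\infty$, so $z_0\notin\wpr(f)$ with $g_0$. This convolution splitting, and in particular controlling the off-cone contribution via the angular-separation estimate, is the main technical point. The statement for $\wfg$ is proved by the same splitting, now using the pointwise decay of $V_{g_1}f$ on $\Gamma$ of every order together with the at-most-polynomial growth of $V_{g_1}f$ (valid since $f\in\cS'$) on the complement.

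For part (ii), the implication $f\in M^p_{v_r}\Rightarrow\wpr(f)=\emptyset$ is immediate, since the integral over any cone is bounded by $\|f\|_{M^p_{v_r}}^p<\infty$ (and $M^p_{v_r}\subset M^p_{-r}$ for $r>0$, so $\wpr(f)$ is defined). Conversely, if $\wpr(f)=\emptyset$, every direction has a conic neighborhood on which the weighted integral converges; by compactness of the unit sphere finitely many such cones $\Gamma_{z_1},\dots,\Gamma_{z_M}$ cover $\rdd\setminus\{0\}$, and since $V_gf$ is continuous, hence bounded on $\{|z|\leq1\}$, we obtain $\int_{\rdd}|V_gf|^p\la z\ra^{pr}\,dz<\infty$, i.e.\ $f\in M^p_{v_r}$. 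The Schwartz statement follows the same scheme: $f\in\cS$ gives $V_gf\in\cS(\rdd)$, whence $\wfg(f)=\emptyset$, while conversely the finite cover produces a global estimate $|V_gf(z)|\leq C_r\la z\ra^{-r}$ for every $r$, placing $f$ in $\bigcap_{s\geq0}M^\infty_{v_s}(\rd)=\cS(\rd)$ by \eqref{HCbis}.
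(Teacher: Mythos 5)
Your proof is correct and follows essentially the same route as the paper: part (i) is the change-of-window inequality (Lemma \ref{changewind}) followed by exactly the cone/complement splitting with the angular-separation bound $\la z-w\ra\gtrsim\max\{\la z\ra,\la w\ra\}$ that the paper imports from the proof of Theorem \ref{5.3} (specialized to $\chi_t=\mathrm{Id}$, $w_0=z_0$), and part (ii) is the same compactness-of-the-sphere argument combined with \eqref{HCbis}. You have merely written out in full the steps the paper delegates by reference.
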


\noindent
The proof of $(i)$ will be given later, as a consequence of more general arguments. The proof of $(ii)$ follows easily from the compactness of the sphere $\cS^{2d-1}$ and \eqref{HCbis}.

The following statement gives the second part of Theorem \ref{T1.5}.
\begin{theorem}\label{5.3}
Under the assumptions of Theorem \ref{PCP}, for $u_0\in M^p_{v_{-r}}(\rd)$, $1\leq p\leq\infty$, $0<2 r<s-2d$, we have
\begin{equation}\label{5.2eq}
\wpr (e^{i t H}u_0)=\chi_t(\wpr( u_0)).
\end{equation}
\end{theorem}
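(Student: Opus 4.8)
The plan is to reduce the theorem to a single inclusion and then obtain the reverse one for free from invertibility. Since $e^{itH}\in FIO(\chi_t,s)$ is invertible on $L^2(\rd)$ with $(e^{itH})^{-1}=e^{-itH}$, the Wiener property (iii) gives $e^{-itH}\in FIO(\chi_{-t},s)=FIO(\chi_t^{-1},s)$. Moreover, by Corollary \ref{corfinal2} and $2r<s-2d$ (so $r<s-2d$), both $u_0$ and $e^{itH}u_0$ lie in $M^p_{v_{-r}}(\rd)$, so both Gabor wave front sets are well defined. Hence it suffices to prove the one-sided statement that, for every $P\in FIO(\chi,s)$ with $\chi$ bi-Lipschitz and positively homogeneous of degree one at infinity, and every $v\in M^p_{v_{-r}}(\rd)$, one has $\wpr(Pv)\subseteq\chi(\wpr(v))$: applying this with $P=e^{itH}$, $\chi=\chi_t$, $v=u_0$ gives one inclusion, and applying it with $P=e^{-itH}$, $\chi=\chi_t^{-1}$, $v=e^{itH}u_0$ yields $\wpr(u_0)\subseteq\chi_t^{-1}(\wpr(e^{itH}u_0))$, i.e.\ the reverse inclusion.

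To prove the inclusion I would argue by contraposition: fix $z_0\neq0$ with $z_0\notin\chi(\wpr(v))$, equivalently $w_0:=\chi^{-1}(z_0)\notin\wpr(v)$, and show $z_0\notin\wpr(Pv)$. The starting point is the kernel representation \eqref{KGbM}, $V_g(Pv)(z)=\intrdd k(w,z)V_gv(w)\,dw$, together with the bound $|k(w,z)|\leq C\la z-\chi(w)\ra^{-s}$. By definition there is a conic neighborhood $\Gamma_{w_0}$ of $w_0$ with $\int_{\Gamma_{w_0}}|V_gv(w)|^p\la w\ra^{pr}\,dw<\infty$. Using that $\chi$ is bi-Lipschitz and homogeneous of degree one for large argument, I would choose a sufficiently narrow open conic neighborhood $\Gamma_{z_0}$ of $z_0$ so that, outside a ball, $\chi^{-1}(\Gamma_{z_0})$ lies well inside $\Gamma_{w_0}$ with a positive angular margin. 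Splitting the $w$-integral accordingly, $V_g(Pv)(z)=F_1(z)+F_2(z)$ with $F_1$ the integral over $\Gamma_{w_0}$ and $F_2$ that over its complement, it remains to bound $\int_{\Gamma_{z_0}}|F_j(z)|^p\la z\ra^{pr}\,dz$ for $j=1,2$ (with the obvious supremum reformulation when $p=\infty$).

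For the \emph{good} part $F_1$ the point is to transfer the weight across $\chi$. By Peetre's inequality and $\la\chi(w)\ra\asymp\la w\ra$ one has $\la z\ra^r\lesssim\la z-\chi(w)\ra^{r}\la w\ra^{r}$, whence
\[
\la z\ra^{r}|F_1(z)|\lesssim\int_{\Gamma_{w_0}}\la z-\chi(w)\ra^{-(s-r)}\,\big(|V_gv(w)|\la w\ra^{r}\big)\,dw.
\]
After the change of variables $u=\chi(w)$ (whose Jacobian is bounded above and below) this is the convolution of the kernel $\la\cdot\ra^{-(s-r)}$, which lies in $L^1(\rdd)$ because $s-r>2d$ (a consequence of $2r<s-2d$), with the $L^p$ function $\mathbf 1_{\Gamma_{w_0}}|V_gv|\la\cdot\ra^{r}$ pushed forward by $\chi$. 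Young's inequality then gives $\la\cdot\ra^{r}F_1\in L^p(\rdd)$, so in particular the integral over $\Gamma_{z_0}$ is finite.

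The \emph{bad} part $F_2$ is where the geometry enters, and I expect it to be the main obstacle. The required fact is that, for the cones chosen above, if $z\in\Gamma_{z_0}$ and $w\notin\Gamma_{w_0}$ are both large then $z$ and $\chi(w)$ are angularly separated, so that $\la z-\chi(w)\ra\gtrsim\la z\ra+\la w\ra$; establishing this uniformly is exactly where the degree-one homogeneity of $\chi$ (mapping cones to cones) must be combined with the bi-Lipschitz bounds controlling the angular distortion. Granting it, I split the decay as $\la z-\chi(w)\ra^{-s}\lesssim\la z\ra^{-a}\la w\ra^{-b}$ with $a+b=s$, so that
\[
\la z\ra^{r}|F_2(z)|\lesssim\la z\ra^{-(a-r)}\int_{\Gamma_{w_0}^{c}}\la w\ra^{-(b-r)}\big(\la w\ra^{-r}|V_gv(w)|\big)\,dw.
\]
The $w$-integral is finite by Hölder's inequality, using $\|\la\cdot\ra^{-r}V_gv\|_{p}=\|v\|_{M^p_{v_{-r}}}<\infty$, provided $(b-r)p'>2d$; and $\int_{\Gamma_{z_0}}\la z\ra^{-(a-r)p}\,dz<\infty$ provided $(a-r)p>2d$. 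Adding the two requirements gives $s-2r=a+b-2r>2d$, which is precisely the hypothesis $2r<s-2d$, so a valid splitting $a+b=s$ exists (with the evident endpoint reading for $p=1,\infty$). Since $V_gv$ is smooth, the contribution of bounded $|z|$ is harmless, so both parts are finite and $z_0\notin\wpr(Pv)$, completing the inclusion and hence the theorem. The same scheme with $\chi=\mathrm{Id}$ underlies the window-independence asserted in Proposition \ref{5.2}(i).
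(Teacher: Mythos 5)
Your proposal is correct and follows essentially the same route as the paper: reduce to one inclusion via $e^{-itH}$, argue by contraposition, split the kernel integral into the cone $\Gamma_{w_0}$ and its complement, use the separation estimate $\la z-\chi_t(w)\ra\gtrsim\max\{\la z\ra,\la w\ra\}$ on the complement, and transfer the weight $\la z\ra^r$ across $\chi_t$ via Peetre on the cone. The only (immaterial) difference is in the complement term, where the paper absorbs both weights into the kernel to get $\la z-\chi_t(w)\ra^{2r-s}$ and applies Young's inequality, while you split the kernel as $\la z\ra^{-a}\la w\ra^{-b}$ and use H\"older; both yield the same condition $2r<s-2d$.
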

\begin{proof} We shall prove that $\wpr (e^{i t H}u_0)\subset\chi_t(\wpr (u_0))$ for any $t\in\bR$. Then, by applying the inclusion to $v_0= e^{-i t H}u_0$, the opposite inclusion will follow, and \eqref{5.2eq} will be proved.
\par
Fixed $t\in\bR$, we assume $z_0\notin \chi_t(\wpr( u_0))$. Since $\chi_t$ is a homogeneous diffeomorphism for large $|z|$, this is equivalent to say that $w_0=\chi_t^{-1}(z_0)$ does not belong to  $\wpr (u_0)$. Therefore for a sufficiently small open conic neighborhood $\Gamma_{w_0}\subset\rdd\setminus{0}$ of $w_0$ we have
\begin{equation}\label{5.3eq}
\int_{\Gamma_{w_0}}|V_g u_0(w)|^p\la w\ra^{pr}<\infty.
\end{equation}
Note also that, in view of the assumption $u_0\in M^p_{v_{-r}}(\rd)$, we have
\begin{equation}\label{5.4eq}
\intrd |V_g u_0(w)|^p \la w\ra^{-pr}<\infty.
\end{equation}
Now from Theorem \ref{PCP}, we have
\begin{equation}\label{5.5eq}
V_g (e^{itH} u_0)(z)=\intrdd k(t,w,z) V_g u_0(w) dw
\end{equation}
with
\begin{equation}\label{5.6eq}
|k(t,w,z)|\leq C(t)  \la z-\chi_t(w) \ra^{-s},\quad w,z\in\rdd.
\end{equation}
We have to show that $z_0\notin \wpr(e^{i t H} u_0)$. To this end, take an open conic neighborhood $\Gamma'_{z_0}$ of $z_0$, such that
$\overline{\Gamma_{z_0}^{'}} \subset \chi_t(\Gamma_{w_0})$. This implies that for $z\in \Gamma_{z_0}^{'}$ and $w\notin\Gamma_{w_0}$
we have
\begin{equation}\label{5.7eq}
 \la z-\chi_t(w) \ra\gtrsim  \max\{\la z\ra,  \la w \ra\},
\end{equation}
since $\chi_t$ is a Lipschitz diffeomorphism. Using \eqref{5.5eq} and \eqref{5.6eq} we estimate
\begin{equation}\label{5.8eq}
|\la z\ra^r V_g (e^{itH} u_0)(z)|\lesssim \intrdd I(z,w) \,dw,
\end{equation}
with
\begin{equation}\label{eq5.9}
I(z,w)= \la z \ra^r\la z-\chi_t(w)\ra^{-s}|V_g u_0(w)|.
\end{equation}
To show $z_0\notin WF_G^r(e^{itH} u_0)$ it will be sufficient to show that $$ \left\| \intrdd I(\cdot,w)\,dw\right\|_{L^p(\Gamma'_{z_0})}<\infty.$$
First, we estimate $\intrdd I(z,w)\,dw$ for $z\in \Gamma'_{z_0}$. We split the domain of integration into two domains $\Gamma_{w_0}$ and $\rdd\setminus \Gamma_{w_0}$. In $\rdd\setminus \Gamma_{w_0}$ we   use \eqref{5.7eq} to obtain
\begin{align*}\int_{\rdd\setminus \Gamma_{w_0}} I(z,w)\,dw &\leq \int_{\rdd\setminus \Gamma_{w_0}} \la z \ra^r\la w \ra ^r\la z-\chi_t(w)\ra^{-s}\frac{|V_g u_0(w)|}{\la w \ra ^r}\, dw\\
&\lesssim  \int_{\rdd\setminus \Gamma_{w_0}} \la z-\chi_t(w)\ra^{2r-s}\frac{|V_g u_0(w)|}{\la w \ra ^r}\, dw\\
&\lesssim \left(\la \cdot\ra ^{2r -s}\ast \frac{|V_g u_0(\cdot)|}{\la \cdot \ra ^r}\right)(z).
\end{align*}
So by \eqref{5.4eq} and using $2r-s<-2d$,
$$\left\|\int_{ \rdd\setminus \Gamma_{w_0}} I(\cdot,w)\,dw \right \|_{L^p(\Gamma'_{z_0})}\lesssim \|\la \cdot\ra ^{2r -s}\|_{L^1(\rdd)}\|\,|V_g u_0|\la \cdot \ra ^{-r}\|_{L^p(\rdd)}<\infty.$$
 In the domain $\Gamma_{w_0}$, 
we have
\begin{align*}\int_{ \Gamma_{w_0}} I(z,w)\,dw &\leq \int_{\Gamma_{w_0}} \la z \ra^r\la w \ra ^{-r}\la z-\chi_t(w)\ra^{-r}\la z-\chi_t(w)\ra^{r-s}|V_g u_0(w)|\la w \ra ^r\, dw\\
&\lesssim  \int_{ \Gamma_{w_0}} \la z-\chi_t(w)\ra^{r-s}|V_g u_0(w)|\la w \ra ^r\, dw\\
&\lesssim \la \chi_t^{-1}(\cdot)\ra ^{r -s}\ast \left(\mbox{Char}_{\Gamma_{w_0}}\cdot|V_g u_0
|\la \cdot \ra ^r\right)(z)
\end{align*}
where $\mbox{Char}_{\Gamma_{w_0}}$ is the characteristic function of the set $\Gamma_{w_0}$. The assumption  \eqref{5.3eq} yields to the estimate
\begin{align*}\left\|\int_{ \Gamma_{w_0}} I(\cdot,w)\,dw \right \|_{L^p(\Gamma'_{z_0})}&\lesssim \|\la \chi_t^{-1}(\cdot)\ra ^{r -s}\|_{L^1(\rdd)}\| \,|V_g u_0|\la \cdot \ra ^r\|_{L^p(\Gamma_{w_0})}\\
&\asymp \|\la \cdot\ra ^{r -s}\|_{L^1(\rdd)}\| V_g u_0\la \cdot \ra ^r\|_{L^p(\Gamma_{w_0})} <\infty,
\end{align*}
for $\chi_t$ is a bi-Lipschitz diffeomorphism and $r-s<2 r-s<-2d$.
This concludes the proof.
\end{proof}
\par
The preceding arguments apply with small changes in the  proof of \eqref{WT1.3}. Let us detail the proof for sake of clarity.
\begin{proof}[Proof of \eqref{WT1.3}]
As in the previous proof, it is enough to show $WF_G (e^{i t H}u_0)\subset \chi_t(WF_G (u_0))$ for any $t\in\bR$. We have to prove that for every $u_0\in\cS'(\rd)$ and $z_0\in\rd$, $z_0\not=0$, the assumption $z_0\notin \chi_t(WF_G u_0)$ implies $z_0\notin WF_G(e^{i t H}u_0)$. Arguing as before, we have that the estimates \eqref{5.3eq} are satisfied for every $r>0$ in a cone $\Gamma_{w_0}$ independent of $r$. Now recall from \eqref{HCbis} that $\cS'(\rd)=\bigcup_{s\geq 0}M^\infty_{v_s}(\rd)$. Therefore $u_0\in M^\infty_{v_{-r_0}}(\rd)$  for some $r_0\geq 0$. Since $\sigma\in S^0_{0,0}=\bigcap_{s\geq 0}M^{\infty}_{1\otimes v_{-s}}(\rdd)$ by \eqref{HC}, we have $\sigma\in M^{\infty}_{1\otimes v_s}(\rdd)$ for every $s\geq0$. We may then apply the arguments in the preceding proof with $s>2r+2d>2r_0+2d$ and obtain the expected estimates \eqref{WFSeq} for any $r>0$. By observing that the choice of the cone $\Gamma'_{z_0}$ does not depend on $r$, the proof is concluded.
\end{proof}

 \begin{proof}[Proof of Proposition \ref{5.2}, (i)]
We prove the independence of the definition of  $\wpr (f)$ on the choice of the window $g$. The independence of  $WF_G (f)$ is attained similarly. \par We assume the estimate for $V_g f$ \eqref{5.1} satisfied, for some fixed $g\in\cS(\rd)\setminus\{0\}$ and some conic neighborhood $\Gamma_{z_0}$ and we want to prove that the estimate holds for $V_h f$, where $h\in\cS(\rd)\setminus\{0\}$ is fixed arbitrary, after possibly shrinking $\Gamma_{z_0}$. To this end, we use Lemma \ref{changewind} which gives
  $$|V_{h} f(z)|\lesssim (|V_{g} f|\ast|V_{h} g|)(z). $$
  Since $V_{h} g\in\cS(\rdd)$ for $g,h\in\cS(\rd)$, we have that for every $s\geq 0$
   $$|V_{h} f(z)|\lesssim \intrdd \la z-w\ra^{-s} |V_{g} f|(w)\,dw. $$
   We know that $f\in M^p_{v_{-r_0}}(\rd)$, for some $r_0\geq 0$. Taking then $s>\max\{r,r_0+2d\}$, the arguments in the proof of Theorem \ref{5.3} apply with $\chi_t=$Id, $w_0=z_0$.
 \end{proof}
 \begin{proposition}\label{5.4}
Let $\sigma \in  M^\infty_{1\otimes v_s}(\rdd)$, $s>2d$ and $0<2r<s-2d$. Then for every $f\in M^p_{v_{-r}}(\rd)$ we have
\begin{equation}\label{5.10eq}
\wpr \,(\sigma(x,D)f)\subset \wpr (f).
\end{equation}
If $\sigma\in S^0_{0,0}$, then for every $f\in\cS'(\rd)$,
\begin{equation}\label{5.11eq}
WF_G  \,(\sigma(x,D)f)\subset WF_G(f).
\end{equation}
 \end{proposition}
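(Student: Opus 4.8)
The plan is to reduce the statement to the pseudodifferential (i.e.\ $\chi=\mathrm{Id}$) instance of the argument already carried out for Theorem \ref{5.3}. The crucial preliminary observation is that $\sigma(x,D)$ is an element of $FIO(\mathrm{Id},s)$: indeed, by the characterization in Proposition \ref{charpsdo}, the hypothesis $\sigma\in M^\infty_{1\otimes v_s}(\rdd)$ is equivalent to the Gabor matrix estimate
\begin{equation*}
|k(w,z)|:=|\la \sigma(x,D)\pi(w)g,\pi(z)g\ra|\leq C\la z-w\ra^{-s},\qquad w,z\in\rdd.
\end{equation*}
Consequently, by \eqref{KGbM}, the STFT of $\sigma(x,D)f$ is reconstructed as $V_g(\sigma(x,D)f)(z)=\intrdd k(w,z)V_gf(w)\,dw$, which is exactly the representation \eqref{5.5eq}--\eqref{5.6eq} with $\chi_t$ replaced by the identity map.

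With this in hand, I would repeat the estimates in the proof of Theorem \ref{5.3}, specialised to $\chi_t=\mathrm{Id}$ (hence $\chi_t^{-1}=\mathrm{Id}$). Fix $z_0\neq 0$ with $z_0\notin \wpr(f)$; since the identity fixes every cone, here $w_0=z_0$ and the cone $\Gamma_{w_0}$ coincides with a cone $\Gamma_{z_0}$ on which \eqref{5.3eq} holds, while \eqref{5.4eq} follows from $f\in M^p_{v_{-r}}(\rd)$. Choosing a smaller cone $\Gamma'_{z_0}$ with $\overline{\Gamma'_{z_0}}\subset \Gamma_{z_0}$, the key inequality \eqref{5.7eq} becomes $\la z-w\ra\gtrsim \max\{\la z\ra,\la w\ra\}$ for $z\in\Gamma'_{z_0}$, $w\notin\Gamma_{z_0}$. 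Splitting $\intrdd I(z,w)\,dw$ over $\Gamma_{z_0}$ and its complement and using $2r-s<-2d$ and $r-s<-2d$ exactly as before, one obtains $\|V_g(\sigma(x,D)f)\,\la\cdot\ra^r\|_{L^p(\Gamma'_{z_0})}<\infty$, i.e.\ $z_0\notin\wpr(\sigma(x,D)f)$. This proves \eqref{5.10eq}. Note that, in contrast with Theorem \ref{5.3}, only the single inclusion is claimed, so no invertibility of the operator (and hence no appeal to the Wiener property) is needed.

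For the second assertion, when $\sigma\in S^0_{0,0}$ we have by \eqref{HC} that $\sigma\in M^\infty_{1\otimes v_s}(\rdd)$ for every $s\geq 0$, so the Gabor matrix estimate above is available for all $s$. I would then copy the proof of \eqref{WT1.3}: given $f\in\cS'(\rd)$ there is $r_0\geq 0$ with $f\in M^\infty_{v_{-r_0}}(\rd)$ by \eqref{HCbis}; for any prescribed decay $r>0$ one applies the previous argument with $s>2r+2d>2r_0+2d$, obtaining the estimate \eqref{WFSeq} for $\sigma(x,D)f$ in a cone $\Gamma'_{z_0}$ that does not depend on $r$. Hence $z_0\notin WF_G(f)$ forces $z_0\notin WF_G(\sigma(x,D)f)$, which is \eqref{5.11eq}.

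The argument presents no genuine obstacle: the single essential input is Proposition \ref{charpsdo}, which translates membership $\sigma\in M^\infty_{1\otimes v_s}$ into the off-diagonal decay $\la z-w\ra^{-s}$ of the Gabor matrix; once this is available, the microlocal inclusion is precisely the $\chi=\mathrm{Id}$ case of the convolution estimates already established in Theorem \ref{5.3}, and the only point requiring (routine) care is checking that the integrability thresholds $2r-s<-2d$ and $r-s<-2d$ survive under the standing hypothesis $0<2r<s-2d$.
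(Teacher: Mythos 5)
Your argument is correct and coincides with the paper's own proof: the authors likewise invoke Proposition \ref{charpsdo} to obtain the Gabor matrix decay $|k(w,z)|\lesssim \la z-w\ra^{-s}$, conclude $\sigma(x,D)\in FIO(\mathrm{Id},s)$, and then apply the estimates of Theorem \ref{5.3} with $w_0=z_0$, handling the $S^0_{0,0}$ case as in the proof of \eqref{WT1.3}. No discrepancies to report.
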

 \begin{proof}
 If $\sigma \in M^\infty_{1\otimes v_s}(\rdd)$, then from Proposition \ref{charpsdo} we have that the Gabor matrix $k(w,z)$ of $\sigma(x,D)$ satisfies
 $$|k(w,z)|\lesssim \la z-w\ra ^{-s},\quad w,z\in\rdd,
 $$
 so that $\sigma(x,D)\in FIO(\chi,s)$ with $\chi={\text{Id}}$. The arguments of the proof of the Theorem \ref{5.3} then apply with $w_0=z_0$. The proof of \eqref{5.11eq} is similar.
 \end{proof}
 \par
 We end the paper with some examples of Schr\"odinger
 equations.\par
 Addressing first to non-expert readers, we present some properties of $WF_G (f)$ and treat in this frame the free particle and the harmonic oscillator with smooth potentials, cf. Examples $1,2,3$. The conclusive Example $4$ concerns non-smooth potentials. 
 \begin{proposition}\label{5.5}
Let $f\in\cS'(\rd)$. Then\\
(i) $WF_G\,(\pi(z_0) f)=WF_G (f) $ for every $z_0=(x_0,\xi_0)\in\rdd$.\\
(ii) Let $\delta_{x_0}$ be the Dirac distribution at the point $x_0\in\rd$. Then $$WF_G\,(\delta_{x_0})=\{z=(x,\xi),\, x=0,\xi\not=0\}$$ independently of $x_0$.\\
(iii) Let $\xi_0$ be fixed in $\rd$. Then $$WF_G\, (e^{2\pi i \la x,\xi_0\ra })=\{z=(x,\xi), x\not=0, \xi=0\}$$ independently of $\xi_0$.\\
(iv) Let $c\in\bR$, $c\not=0$, be fixed. Then
 $$WF_G\, (e^{\pi i c|x|^2 })=\{z=(x,\xi), \, x\not=0, \, \xi=c x\}.$$
 \end{proposition}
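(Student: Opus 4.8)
The plan is to reduce every statement to an explicit computation of the modulus $|V_g f(z)|$ for a single conveniently chosen window $g\in\cS(\rd)$, and then to read off the conic directions at infinity along which rapid decay holds. Since Proposition \ref{5.2}(i) guarantees that $WF_G$ is independent of $g$, I am free to adapt $g$ to each $f$: in part (ii) I pick $g$ with $g(x_0)\neq0$, in part (iii) a $g$ with $\int g\neq0$, and in part (iv) a Gaussian.

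For (i) I would start from the covariance of the STFT under \tfs s, namely $|V_g(\pi(z_0)f)(z)|=|V_g f(z-z_0)|$, which follows directly from \eqref{TF-S} and \eqref{STFT}. Thus the STFT of $\pi(z_0)f$ is merely the translate by the fixed vector $z_0$ of that of $f$. A fixed translation does not alter the behaviour at infinity inside a cone: after shrinking a given $\Gamma_{z_0}$ slightly, the shifted variable $z-z_0$ still ranges in a cone of the same direction with $\la z-z_0\ra\asymp\la z\ra$, so the defining estimate \eqref{WFSeq} holds for $\pi(z_0)f$ in $\Gamma_{z_0}$ if and only if it holds for $f$ in a (possibly smaller) cone. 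Hence $WF_G(\pi(z_0)f)=WF_G(f)$.

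For (ii) a direct computation from \eqref{STFT} gives $V_g\delta_{x_0}(x,\xi)=e^{-2\pi i\la \xi,x_0\ra}\overline{g(x_0-x)}$, so $|V_g\delta_{x_0}(x,\xi)|=|g(x_0-x)|$ is independent of $\xi$ and Schwartz-decaying in $x$. The conic analysis then splits in two cases. If $z_0=(x_0',\xi_0')$ has $x_0'\neq0$, a sufficiently narrow cone about $z_0$ satisfies $|x|\gtrsim|z|$ for large $z$, whence $|g(x_0-x)|\lesssim\la z\ra^{-r}$ for every $r$ and $z_0\notin WF_G(\delta_{x_0})$. If instead $x_0'=0$ (so $\xi_0'\neq0$), the central ray $\{(0,\lambda\xi_0'):\lambda>0\}$ lies in every cone about $z_0$, and there $|V_g\delta_{x_0}|=|g(x_0)|\neq0$ stays bounded below while $\la z\ra\to\infty$, so \eqref{WFSeq} fails and $z_0\in WF_G(\delta_{x_0})$; this yields $\{x=0,\ \xi\neq0\}$. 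Part (iii) is dual: since $e^{2\pi i\la x,\xi_0\ra}=\pi(0,\xi_0)1$, part (i) reduces it to $WF_G(1)$, and \eqref{STFT} gives $|V_g 1(x,\xi)|=|\hat g(-\xi)|$, independent of $x$ and rapidly decaying in $\xi$; interchanging the roles of $x$ and $\xi$ in the previous argument gives $WF_G(1)=\{x\neq0,\ \xi=0\}$.

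Finally, for (iv) I would compute the STFT of $f(t)=e^{\pi i c|t|^2}$ against the Gaussian $g(t)=e^{-\pi|t|^2}$. The integral is Gaussian and factorizes over coordinates, and completing the square yields $|V_g f(x,\xi)|=(1+c^2)^{-d/4}\,e^{-\pi|\xi-cx|^2/(1+c^2)}$, a Gaussian ridge concentrated on the line $\xi=cx$. Off this line, every narrow cone gives $|\xi-cx|\gtrsim|z|$ and hence rapid (indeed Gaussian) decay; on the line, the central ray $\{(x,cx):x\neq0\}$ keeps $|V_g f|$ bounded below, so exactly the points with $\xi=cx$, $x\neq0$ survive. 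Alternatively one can invoke metaplectic covariance: multiplication by $e^{\pi i c|x|^2}$ is the metaplectic operator attached to the symplectic shear $(x,\xi)\mapsto(x,\xi+cx)$, so $WF_G(e^{\pi i c|x|^2})$ is $WF_G(1)$ transported by this shear, which carries $\{(x,0):x\neq0\}$ onto $\{(x,cx):x\neq0\}$. The main technical point throughout is the conic bookkeeping, namely turning pointwise Schwartz or Gaussian decay of $V_g f$ into \eqref{WFSeq} on suitably shrunk cones and, conversely, exhibiting the central ray on which decay fails; the explicit Gaussian integral in (iv) is the only genuinely computational step.
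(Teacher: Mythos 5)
Your proof is correct, and parts (ii)--(iv) follow essentially the same route as the paper: the same explicit formulas $|V_g\delta_{x_0}(x,\xi)|=|g(x_0-x)|$, $|V_g1(x,\xi)|=|\hat g(-\xi)|$, and the Gaussian ridge $(1+c^2)^{-d/4}e^{-\pi|\xi-cx|^2/(1+c^2)}$, followed by the same conic bookkeeping (rapid decay in narrow cones missing the critical line, failure of decay along the central ray). The one genuine divergence is part (i): the paper obtains it as a corollary of Proposition \ref{5.4}, viewing $\pi(z_0)=M_{\xi_0}T_{x_0}$ as a pseudodifferential operator with symbol $e^{2\pi i(\la x,\xi_0\ra-\la x_0,\xi\ra)}\in S^0_{0,0}$ and invoking the (nontrivial) microlocal mapping property of $FIO(\mathrm{Id},s)$ operators, whereas you use the elementary covariance $|V_g(\pi(z_0)f)(z)|=|V_gf(z-z_0)|$ and the fact that a fixed translation is absorbed by shrinking the cone. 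Your route is more self-contained and avoids the FIO machinery entirely; the paper's route is shorter on the page but only because the heavy lifting was already done in Theorem \ref{5.3} and Proposition \ref{5.4}. Two small points worth making explicit if you write this up: in the cone-shrinking step you should note that $V_gf$ is continuous and polynomially bounded for $f\in\cS'$, so the estimate \eqref{WFSeq} is automatic on the bounded part of the cone and only the behaviour at infinity matters; and in (ii)--(iii) your freedom to choose $g$ with $g(x_0)\neq0$, respectively $\hat g(0)\neq0$, does indeed rest on Proposition \ref{5.2}(i), which you correctly cite.
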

 \begin{proof}
 The proof of $(i)$ is a consequence of Proposition \ref{5.4}, since $\pi(z_0)=M_{\xi_0}T_{x_0}=\sigma(x,D)$ with $\sigma(x,D)$ being a  pseudodifferential operator with symbol $$\sigma(x,\xi)=e^{2\pi i (\la x, \xi_0\ra-\la x_0,\xi\ra)}\in S^0_{0,0}.$$
Concerning $(ii)$, we are reduced to compute $WF_G\,(\delta)$ since $$WF_G\,(\delta_{x_0})=WF_G\,(T_{x_0}\delta)=WF_G\,(\delta)$$
by   item $(i)$. On the other hand, $V_g (\delta) (x,\xi)=\overline{g(-x)}$. Hence in a small conic neighborhood $\Gamma\subset\rdd$ of the ray $x=t\xi$, $t\in\bR$, $\xi\not=0$, we have rapid decay of $g(-t\xi)$ but for $t=0$, giving the claim.\par
To prove $(iii)$ we proceed similarly as before. From item $(i)$ we obtain that $$WF_G\, (e^{2\pi i \la x,\xi_0\ra })=WF_G \,(M _{\xi_0}1)=WF_G \,(1).$$
On the other hand $|V_g 1(x,\xi)|=|M_{-x}\hat{g}(-\xi)|$ so that
 $|V_g 1(x,\xi)|=|\hat{g}(-\xi)|$ and the arguments of item $(ii)$ give the desired result.\par
 We now prove $(iv)$. We use the Gaussian $g(x)=e^{-\pi |x|^2}$ as a window for the STFT $V_g f$ with $f(x):=e^{\pi i c|x|^2}$. Then standard computations (see also \cite[Theorem 14]{Benyi et all}) give
 $$|V_g f(x,\xi)|=(1+c^2)^{- d /4}e^{-\pi|\xi-c x|^2/(1+c^2)}.
 $$
 The right-hand side is rapidly decaying in any open cone of $\rdd$ excluding the line $\xi-c x=0$. This concludes the proof of the proposition.
 \end{proof}

 \textit{Example 1.}\label{ex1} {\bf The free particle}.\par
 Consider the Cauchy problem for the
Schr\"odinger equation
\begin{equation}\label{cp}
\begin{cases}
i\partial_t u+\Delta u=0\\
u(0,x)=u_0(x),
\end{cases}
\end{equation}
with $x\in\R^d$, $d\geq1$. The  explicit formula for the solution in terms of the kernel is
\begin{equation}\label{sol}
u(t,x)=(K_t\ast u_0)(x),
\end{equation}
where
\begin{equation}\label{chirp0}
K_t(x)=\frac{1}{(4\pi i t)^{d/2}}e^{i|x|^2/(4t)}.
\end{equation}
whereas in terms of classical FIO:
\begin{equation}\label{5.14eq}
u(t,x)= \intrd e^{2\pi i( \la x,\eta\ra -2\pi t
|\eta|^2)} {\widehat {u_0}}(\eta)d\eta.
\end{equation}
The Gabor  matrix  with window function $g(x)=e^{-\pi|x|^2}$ can be controlled (see \cite[Theorem 5.3]{fio-Gelfan-Shilov} even for more general operators):
\begin{equation}\label{5.15eq}
|k(w,z)|\leq C e^{-\eps |z-\chi_t(w)|^2},
\end{equation}
for suitable constants $C>0$ and $\epsilon>0$ and where, for $w=(y,\eta)$,
\begin{equation}
\label{5.16eq} (x,\xi)=\chi_t(y,\eta)=(y+4\pi t \eta,\eta).
\end{equation}
Beside the effectiveness in numerical analysis, cf.\ \cite[Section 6.1]{fio3}, this expression emphasizes the microlocal properties of the propagator. Let us test the propagator of the Gabor wave front set on some particular initial data. If $u_0=\delta$ then $u(t,x)=K_t(x)$ by \eqref{sol}. This is coherent with \eqref{WT1.3} and \eqref{5.16eq}, since from Proposition \ref{5.5}, $(iv)$ and $(ii)$, we have
\begin{align*}
WF_G \,(u(t,x))&= WF_G \,(K_t)=\{(x,\xi),\,x=4\pi t\xi,\,\xi\not=0\}\\
&=\chi_t(WF_G (\delta))=\chi_t(\{(y,\eta),\,y=0,\eta\not=0\})
\end{align*}
 We remark a similar propagation for the initial datum
 $$u_0=K_{-1}(t)=(-4\pi i)^{-d/2} e^{-i |x|^2/4}$$
 for which we have $u_{t=1}=\delta$. Instead, for $u_0=e^{2\pi i \la x,\xi_0\ra}$, with $\xi_0\in\rd$, we have
 $$u(t,x)=e^{- 4 \pi^2 i t |\xi_0|^2} e^{2\pi i \la x,\xi_0\ra}
 $$
and in this case the Gabor wave front set is stuck:
$$ WF_G \,(u(t,x))=WF_G\, (u_0)=\{ (x,0),\,x\not=0\},
$$
by Proposition \ref{5.5}, $(iii)$ and \eqref{5.16eq}.
\vskip0.3truecm

\textit{Example 2.}\label{ex2} {\bf The harmonic oscillator}.\par
Consider the Cauchy problem
\begin{equation}\label{5.17eq}
\begin{cases} i \partial_t
 u -\frac{1}{4\pi}\Delta u+\pi|x|^2 u=0\\
u(0,x)=u_0(x).
\end{cases}
\end{equation}
The solution in terms of a FIO type \eqref{1.13} is
\begin{equation}\label{5.18eq}
u(t,x)=(\cos t)^{-d/2}\intrd e^{2\pi i [\frac1{\cos t} x\eta + \frac{\tan t}2 (x^2+\eta^2)]}\hat{u_0}(\eta)\,d\eta,\quad t\not=\frac\pi 2 + k\pi,\,\,k\in\bZ.
\end{equation}
The Gabor matrix with Gaussian window $g(x)=e^{-\pi|x|^2}$ can be explicitly computed as
\begin{equation}\label{5.19eq}
|k(w,z)|=2^{-\frac d 2} e^{-\frac \pi 2 |z-\chi_t(w)|^2},
\end{equation}
where the canonical transformation is
defined in \eqref{5.20eq}.
Observe that the expression \eqref{5.19eq} is meaningful for every $t\in\bR$. Let us refer to \cite[Section 6.2]{fio3} for applications to numerical experiments. \par
We may test \eqref{WT1.3} on the initial datum $u_0(x)=1$, giving for $t<\pi/2$,
$$u(t,x)=(\cos t)^{-d/2} e^{\pi i \tan t |x|^2}.
$$
From Proposition \ref{5.5}, $(iii)$ and $(iv)$, we have coherently with \eqref{5.20eq}
\begin{align*}WF_G \,(u(t,x))&=\{ (x,\xi),\,x=(\cos t)y, \,\xi=(\sin t) y, y\not=0\}\\
&=\chi_t(WF_G\,(1))=\chi_t (\{(y,\eta),\, y\not=0,\eta=0\}).
\end{align*}
\vskip0.3truecm

\textit{Example 3.} {\bf Smooth potentials}.\par
We now consider the presence in Example $1$ of a potential with symbol in the class $S^0_{0,0}$.  Consider the case
\begin{equation}\label{5.21eq}
e^{-2\pi i \la x_0,\xi\ra },\quad x_0\in\rd\,\mbox{fixed}.
\end{equation}
The related pseudodifferential operator $\sigma(D)$ is the translation operator
\begin{equation}\label{5.22eq}
 \sigma(D)f(x)=T_{x_0}f(x)=f(x-x_0),
\end{equation}
which does not preserve the singular support. Consider first the equation
\begin{equation}\label{5.23eq}
\begin{cases} i \partial_t
 u + \sigma(D) u=0\\
u(0,x)=u_0(x).
\end{cases}
\end{equation}
The solution is given by
\begin{equation}\label{5.24eq}
u(t,x)=e^{i t T_{x_0}} u_0(x)=\intrd e^{2\pi i \la x,\xi\ra } \exp(i t e^{-2\pi i \la x_0, \xi\ra })\,\hat{u_0}(\xi)\,d\xi.
\end{equation}
Despite the nasty oscillations, the symbol of the solution operator belongs to $S^0_{0,0}$ and from Proposition \ref{2}{5.4} we have for every fixed $t\in\bR$,
$$WF_G (e^{i t T_{x_0}} u_0)=WF_G (u_0),
$$
the identity being granted by the fact that $T^{-1}_{x_0}=T_{-x_0}$. Note that the singular support can be expanded. In fact, taking $u_0=\delta$ we have
$$e^{i t T_{x_0}} \delta=\sum_{n=0}^\infty \frac{(it)^n}{n!} \delta_{n x_0}\in\cS'(\rd)
$$
so that sing supp $e^{i t T_{x_0}} \delta= \{n x_0\}_{n\in\bZ_+}$ as soon as $t\not=0$, whereas
$$WF_G \,(e^{i t T_{x_0}} \delta)=WF_G\, (\delta)=\{(0,\xi),\,\xi\not=0\}.
$$
Adding now the potential $\sigma(D)$ to the free particle in Example $1$, we have the Schr\"odinger equation with space-delay
\begin{equation}\label{5.25eq}
\begin{cases}
i\partial_t u+\Delta u+T_{x_0}u=0\\
u(0,x)=u_0(x).
\end{cases}
\end{equation}
Since the operators $e^{it\Delta}$ and $T_{x_0}$ commute, the arguments of of Section $4$ provide as propagator $e^{i t T_{x_0}}e^{it\Delta}$,
that is the convolution with
$$\sum_{n=0}^\infty \frac{(it)^n}{n!}K_t(x-n x_0)\in\cS'(\rd),
$$
where $K_t$ is defined in \eqref{chirp0}. The Gabor propagation is the same as in Example $1$.\par
From a physical point of view, it is perhaps most natural to consider the case when the potential depends on $x$ alone, for example
\begin{equation}\label{5.26eq}
\begin{cases}
i\partial_t u+\Delta u+ M_{\xi_0}u=0\\
u(0,x)=u_0(x),
\end{cases}
\end{equation}
with $M_{\xi_0}u_0=e^{2\pi i \la x,\xi_0\ra }u_0$, $\xi_0$ fixed in $\rd$. Notice that now the operators $e^{i t \Delta}$ and $M_{\xi_0}$ do not commute and, proceeding as in Section $4$ with the perturbation $Bu=M_{\xi_0}u$, we have first to consider
$$B(t)=e^{-i t \Delta} e^{2\pi i \la x,\xi_0\ra } e^{i t \Delta}.
$$
 Omitting further explicit computations, we obtain
\begin{equation}\label{5.27eq}
B(t)=e^{4 \pi^2 i \xi_0^2 t} M_{\xi_0}T_{-4\pi t \xi_0}.
\end{equation}
In principle, one could then continue the computation of the pseudodifferential operator $Q(t)$ in \eqref{eq:kh8} explicitly, and the solution operator will be $e^{it\Delta}Q(t)$. \par 
Observe in \eqref{5.27eq} the presence of the translation factor $T_{4\pi t \xi_0}$, providing same phenomena as before.
\vskip0.3truecm

\textit{Example 4.}\label{ex4} {\bf Non-smooth potentials}.\par
As examples of admissible non-smooth potentials, consider first a non-polynomial homogeneous function $h(z)$, $z=(x,\xi)$, $h(\lambda z)=\lambda^r h(z)$ for $z\not=0$, $\lambda>0$, $r>0$, with $h\in\cC^\infty(\rdd\setminus\{0\})$, and take then as potential any function $\sigma(z)=h(z)$, for $|z|\leq 1$, and $h(z)\in S^0_{0,0}$ for $|z|\geq 1$. This potential satisfies $\sigma\in M^{\infty}_{1\otimes v_{r+ 2d}}(\rdd)$. In fact, we may limit the analysis to the singularity at the origin. From Proposition \ref{fabio3} we have, for $\psi \in\cS(\rdd)$,
\begin{equation}\label{5.28eq}
|V_\psi \sigma (z,\zeta)|\leq C \la \zeta\ra^{-r-2d},\quad z,\zeta\in\rdd.
\end{equation}
We may now return to the discussion about the smoothness  at the origin of the Hamiltonian $a(z)$ in the Introduction. Consider $h(z)$ real-valued non-polynomial homogeneous of degree $2$, $h\in\cC^{\infty}(\rdd\setminus\{0\})$, just to give an example
$$ h(x,\xi)=(|x|^4+|\xi|^4)^{1/2}.
$$
We can include in our analysis the equation
\begin{equation}\label{5.29eq}
\begin{cases}
i\partial_t u+h(x,D)u=0\\
u(0,x)=u_0(x),
\end{cases}
\end{equation}
by absorbing the singularity at the origin into the potential. Namely, take $\f\in\cC^\infty_0(\rd)$, $0\leq\f(z)\leq 1$, $\f(z)=1$ for $|z|\leq 1$, $\f(z)=0$ for $|z|\leq 2$, and split
$$h(z)=a(z)+\sigma(z),\quad a(z)=(1-\f(z))h(z),\,\,\sigma(z)=\f(z)h(z).
$$
At this moment $a(z)$ satisfies the assumptions in the Introduction and the potential $\sigma$ belongs to $M^\infty_{1\otimes v_{2+2d}}(\rdd)$,   in view of \eqref{5.28eq}. We may then apply Theorem \ref{T1.5} to the Cauchy problem \eqref{5.29eq}. Note that the result of propagation should be limited to $u_0\in M^p_{ v_{-r}}(\rd)$ and $\wpr (e^{i t H} u_0)$ with $0<r<1$.\par
Finally, we present an example of non-smooth potential depending on $x$ alone, namely in dimension $d=1$
\begin{equation}\label{5.30eq}
\sigma(x,\xi)=|\sin x|^\mu,\quad \mu>1,\ x,\xi\in\bR.
\end{equation}
By Corollary \ref{fabio2}, $\sigma\in M^\infty_{1\otimes v_{\mu+1}}(\R^2)$. So, consider for instance the perturbed harmonic oscillator in \eqref{5.31eq}.
From Theorem \ref{T1.5} we have that the Cauchy problem is well-posed for $u_0\in M^p_{v_{r}}(\R)$, $|r|<\mu-2$ and the propagation of $\wpr\, (u(t,\cdot))$ for $t\in \bR$ takes place as in Example $2$ for $0<r<\mu/2-1$.
\section*{Appendix}
\begin{proof}[Proof of Theorem \ref{caraI}]
First we prove  {\rm (ii)} $\Rightarrow$  {\rm (i)}. Assume $I\in FIO(\chi_t,s)$ and
\begin{equation}\label{first}
|\langle I\pi(w) g,\pi(z))g\rangle|\leq C(t)\langle z-\chi_t(w)\ra,
\end{equation}
with $C(t)$ positive continuous function on  $]-T, -T[$.
Setting $w=(x,\eta)$ and $z=(x',\eta')$,
using the fact that each component of the mapping $\chi_t(y,\eta)$ and its inverse  is in $\cC^\infty(]-T,T[,\Gamma^1(\rdd))$
we can control the Lipschitz constants of $\chi_t$ and $\chi_t^{-1}$ by continuous constants of $t$
so that the equivalence of \cite[Lemma 4.2]{Wiener}
becomes
\begin{equation}\label{3mezzo}
|\nabla_x\Phi(t,x',\eta)-\eta'|+|\nabla_\eta\Phi(x',\eta)-x|
\asymp_t |\chi _1(t,x,\eta)-x'|+| \chi _2(t,x,\eta)-\eta'|
\end{equation}
for every $x,x',\eta, \eta' \in \rd$
and the implicit constants in the equivalence $\asymp_t $ are continuous with respect to $t\in]-T,T[$.
This reduces the study  to showing that if the operator $I(\sigma_t,\Phi(t,\cdot))$, with $\Phi(t, )$ being the phase related to $\chi_t$ in  \eqref{rel-chi-phi} and satisfying \eqref{eiconal}, fulfils the estimate
\begin{equation}\label{uno}
|\langle I(\sigma_t,\Phi(t,\cdot)) \pi(x,\eta) g,\pi(x',\eta')g\rangle|\leq C(t)\langle \nabla_x
\Phi(t,x',\eta)-\eta',\nabla_\eta \Phi(t,x',\eta)-x\rangle^{-s}
\end{equation}
with $x,x',\eta,\eta'\in\R^d $, $t\in ]-T,T[$, then
\begin{equation}\label{stimar}
 \|\sigma_t\|_{M^{\infty}_{1\otimes v_s}}\leq  C(t),\quad t\in ]-T,T[.
\end{equation}
For $z,w\in\rdd$, let  $\Phi _{2,z}(t,\cdot)$ be the remainder  in the second
 order Taylor expansion  of the phase
 $\Phi(t,\cdot) $, i.e.,
\[
\Phi_{2,z}(t,w)=2\sum_{|\alpha|=2}\int_0^1
(1-\tau)\partial^\alpha\Phi(t,z+\tau w)d\tau \frac{w^\alpha}{\alpha!} \, .
\]
For  a given window $g\in\cS(\rd)$, we set
\begin{equation}\label{psi}
\Psi_z(t,w)=e^{2\pi i \Phi_{2,z}(t,w)}
\big(\overline{g}\otimes\widehat{g}\big)(w).
\end{equation}
Then, the fundamental relation between the Gabor matrix of a FIO and the STFT of its symbol  from \cite[Prop. 3.2]{fio5} and
\cite[Section 6]{fio1} can be rephrased in this framework as
\[
|\langle I \pi(x,\eta)
g,\pi(x',\eta')g\rangle|=|V_{\Psi_{(x',\eta)}}\sigma_t
((x',\eta),(\eta'-\nabla_x\Phi(t,x',\eta),x-\nabla_\eta
\Phi(t,x',\eta)))| \, .
\]
Writing $u=(x',\eta)$, $v=(\eta',x)$, \eqref{uno}  translates into
\[
|V_{\Psi_u(t,\cdot)}\sigma_t(u,v-\nabla\Phi(t,u))|\leq C(t)\langle v-\nabla \Phi(t,u)\rangle^{-s},
\]
and then  into the estimate
\begin{equation}\label{due}
\sup_{(u,w)\in\R^{2d}\times \R^{2d}}\langle w\rangle ^s |V_{\Psi_u(t,\cdot)}\sigma_t(u,w)|\leq C(t).
\end{equation}
The main technical work done in \cite{locNC09} for the time independent case
$\Psi_u(t,\cdot)=\Psi_u(\cdot)$ is to show that the set of windows $\Psi _u$
possesses a joint time-frequency envelope. This property  allows  to write
  $\sigma \in M^{\infty}_{1\otimes v_s}(\R^{2d})$ if and only if
$\sup_{u\in\rdd}|V_{\Psi_u} \sigma|\in L^{\infty}_{1\otimes
v_s}(\R^{4d})$
with \begin{equation}\label{symt}
\|\sigma\|_{ M^{\infty}_{1\otimes v_s}}\asymp
\|\sup_{u\in\rdd}|V_{\Psi_u} \sigma|\|_{L^{\infty}_{1\otimes
v_s}}.
\end{equation}
 The proof of the previous equivalence passes through several lemmas. We point out that the crucial element of the equivalence  is a control of $ |V_\Psi e^{2\pi i \Phi_{2,z}}(u,w)| $, with $\Psi\in\cS(\rdd)\setminus\{0\}$ fixed,  by a
polynomial  $p_\alpha(\partial \Phi_{2,z}(\zeta))$   of derivatives of
   $\Phi_{2,z}$ of degree at most $|\a|$ times a factor that does not depend on $t$. Since $\Phi(t,)\in \cC^\infty(]-T,T[,\Gamma^2(\rdd)) $, we can control the polynomial by a continuous function of $t$ and in the end obtaining that the equivalence \eqref{symt} depends continuously on $t$, which together with \eqref{due} gives \eqref{stimar}. \par

  {\rm (i)} $\Rightarrow$  {\rm (ii)}. If  $I=I(\sigma_t,\Phi_\chi)$ is a FIO of type
   I for $\Phi(t,\cdot)$ and $\chi_t$ in \eqref{rel-chi-phi} and some $\sigma_t\in M^{\infty}_{1\otimes v_s}(\rdd)$ which satisfies \eqref{controllosigma}, then essentially  reading  backwards the  arguments above give  $I(\sigma_t,\Phi_\chi)\in FIO(\chi_t,s)$ with $C(t)$ being a continuous function of $t$.
\end{proof}

\section*{Acknowledgements}
We thank Prof.\ A.\ Vasy for pointing out some references and for useful  comments.

\end{document}